\documentclass[11pt, a4paper]{amsart}
\usepackage[english]{babel}
\usepackage[]{graphicx}
\usepackage{amsfonts,amsmath}	
\usepackage{epsfig}
\usepackage{amsthm,amssymb}
\usepackage[subnum]{cases}
\usepackage{url}
\usepackage{verbatim}
\usepackage{texdraw}
\makeindex

\theoremstyle{definition}
\newtheorem{thm}{Theorem}
\newtheorem{cor}[thm]{Corollary}
\newtheorem{lem}[thm]{Lemma}
\newtheorem{defi}[thm]{Definition}
\newtheorem{prop}[thm]{Proposition}
\newtheorem{exam}[thm]{Example}
\newtheorem{rem}[thm]{Remark}
\newtheorem{ques}[thm]{Question}

\providecommand{\WileyBibTextsc}{}
\let\textsc\WileyBibTextsc

\title[\resizebox{4.5in}{!}{Gonality of complete graphs with a small number of omitted edges}]{Gonality of complete graphs with a small number of omitted edges}
\author{Marta Panizzut}
\email{{\tt marta.panizzut@wis.kuleuven.be}}
\address{KU Leuven, Department of Mathematics, Celestijnenlaan 200B, 3001 Heverlee, Belgium}
\thanks{The author is supported by the Research Project G.0939.13N of the Research Foundation - Flanders (FWO)}
\keywords{Gonality, metric graphs, curves, lifting problems, harmonic morphisms}
\subjclass[2010]{14T05, 14H51, 05C99}

\begin{document}

\begin{abstract}
Let $K_d$ be the complete metric graph on $d$ vertices. We compute the gonality of graphs obtained from $K_d$ by omitting edges forming a $K_h$, or general configurations of at most $d-2$ edges. We also investigate if these graphs can be lifted to curves with the same gonality. We lift the former graphs and the ones obtained by removing up to $d-2$ edges not forming a $K_3$  using models of plane curves with certain singularities. We also study the gonality when removing $d-1$ edges not forming a $K_3$. We use harmonic morphism to lift these graphs to curves with the same gonality because in this case plane singular models can no longer be used due to a result of Coppens and Kato.  \end{abstract}
\maketitle                   

\section{Introduction}

A theory of linear systems of divisors on finite graphs, in analogy with the one on algebraic curves, has been developed by Baker and Norine in \cite{BN}.  In their groundbreaking paper they proved the combinatorial equivalents of fundamental results of the classical theory, such as the Riemann--Roch Theorem  and  the Clifford's inequality.  Since then this setting has been a fruitful ground for further developments. Among others, Gathmann and Kerber \cite{GK}, Mikhalkin and Zarkov \cite{MZ} extended the theory to metric graphs and tropical curves, Amini amd Caporaso \cite{AC} to weighted tropical curves, and  Amini and Baker \cite{AB} to metrized complexes of algebraic curves.  
 
The analogy between the case of curves and graphs is not only purely formal but there is an interplay between them, as explained by Baker in~\cite{Bak}. Let $R$ be a discrete valuation ring with field of fractions $K$. A model for a curve $X$ over $K$ is a surface over $R$ whose generic fiber is $X$. Given a smooth curve $X$ over $K$ and a strongly semistable regular model $\mathfrak{X}$  over $R$ of $X$, it is possible to specialize a divisor on the curve to a divisor on the dual  graph of the special fiber of $\mathfrak{X}$. By this specialization the rank can only increase. When working with an algebraically closed field, complete with respect to a nontrivial non-Archimedean valuation, the same result can be obtained via the analytification $X^{\textrm{an}}$ of the curve $X$, without requiring that the model $\mathfrak{X}$ is regular. The dual graph of the special fiber in this context is called skeleton. 

\vspace{\baselineskip}

Understanding how deep the analogy is between the two theories is an interesting problem. Natural questions that can be considered are whether results for linear systems on curves hold also for metric graphs. Works in this direction showed for example, that the theory of linear systems of Clifford index $0$ is the same for graphs and curves (see \cite{Fac, Cop}), while the same is not true for small Clifford indexes  different from $0$ (see \cite{Cop2}). 

Moreover these problems can be studied together with other natural questions that arise in the setting of Baker's specialization lemma and that are called lifting problems. There are different formulations depending, for example, on whether we are interested in one divisor or all the divisors on a graph. In the former case the question is  whether given a  divisor on a metric graph with a certain rank there exists a divisor on a curve with the same rank specializing to it, see Definition \ref{lifting} and Question \ref{question1}. In the latter one, the problem is, given a metric graph, under which conditions there exists a curve such that for every divisor on the graph there is a divisor on the curve with the same rank specializing to it, see Question \ref{question2}. For hyperelliptic graphs and non-hyperelliptic graphs of genus $3$, Kawaguchi-Yamaki in \cite{KY1, KY2} gave a characterization of the graphs for which it is always possible to lift divisors. Lifting problems can be generalized to the question of understanding to which extent linear systems on the graph $\Gamma$ can be explained by linear systems on the curve $X$. 

\vspace{\baselineskip}
One of the most renowned birational invariants of an algebraic curve is the gonality, that is the smallest integer $d$ for which there exists a $g_d^1$. In this paper we will address these types of problems regarding the gonality for a special type of curves and graphs: plane curves with nodes, and complete graphs with some edges removed. 

The complete graph on $d$ vertices is the dual graph of the special fiber of a model of a smooth plane curve of degree $d$. The special fiber of this model is a reduced union of lines in the plane, see Section \ref{pcs}. The curve and the graph have the same gonality, namely $d-1$. Moreover the divisors of rank $1$ and degree $d-1$ that we will consider on the graph lift to divisor on the curve. Starting from this strong analogy, we want to consider the gonality of singular plane curves.  We will focus in particular on the result proved by Coppens and Kato in \cite{CK} that if $k >0$ and $d \geq 2(k+1)$, the normalization of a plane curves of degree $d$ with $\delta < kd - (k+1)^2 + 3$ nodes has gonality $d-2$. 
 
 Omitting edges of a complete graph can be interpreted as resolving some singularities in the plane curve. In particular the graph $K_d'$ obtained by removing from $K_d$ the edges forming a $K_h$ is the dual graph of the special fiber of a model of a plane curve with a singular point of multiplicity $h$, after blowing up the singular point. The special fiber is a partial normalization of a reduced union of lines in the plane, see Section \ref{mulh}. The curve and the graph have gonality $d-h$ and there are divisors of degree $d-h$ and rank $1$ on the curve specializing to the divisors constructed to compute the gonality of the graph.

Similarly, the graph $K_d'$ obtained by removing up to $d-2$ edges not forming a $K_3$ configuration is the dual graph of a model of a plane curve of degree $d$ with up to $d-2$ nodes, after blowing up along the nodes, see Section \ref{nodes}. The result of Coppens and Kato mentioned above states that the normalization of such curve has gonality $d-2$, the same as the graph $K_d'$. Also in this case, it is possible to lift the divisors of degree $d-2$ and rank $1$ on the graph.

\vspace{\baselineskip}
We show that the first difference between the gonality of these graphs and curves occurs when removing $d-1$ edges. In fact it is possible to obtain a connected graph $K_d'$ of gonality $d-3$ by removing $d-1$ edges not forming a $K_3$ configuration. For the result of Coppens and Kato mentioned above, it is no longer possible to use plane singular model to solve this lifting problem. We will explain how to find a curve $X$ such that its skeleton is $K_{d}'$ and such that the divisor of degree $d-3$ and rank one lifts, constructing a tropical morphism $\varphi: K_d' \rightarrow \mathbb{TP}^1$ of degree $d-3$ and lifting it to a harmonic morphism of triangulated punctured curves, using results from \cite{ABBR1, ABBR2}. 

\vspace{\baselineskip}
These results have also an interpretation with respect to the moduli space of curves. The dual weighted graph of a stable curve is defined as a pair $(G,w)$, where $G$ is the dual graph of the curve and $w$ is the weight function $w: V(G) \rightarrow \mathbb{Z}_{\geq 0}$, that associates to every vertex the genus of the irreducible component corresponding to it. The Deligne-Mumford space $\overline{M}_g$ of stable curves of genus $g \geq 2$ has a stratification in loci $M_{(G,w)}$ parametrizing curves with the same dual weighted graph $(G,w)$, see \cite{ACG}, 
\[
\overline{M}_g\setminus M_g = \bigsqcup_{(G,w), \ \textrm{genus} \ g} M_{(G,w)}.
\]
Inside $M_g$ there is the locus $M_{g,k}$ of curves having the same gonality $k$. If we consider $g$ as the genus of a plane curve of degree $d$ with $i$ nodes, that is $g = \frac{(d-1)(d-2)}{2} - i$, and $K_d'$ as finite graph with weight function equal to zero, our results can be interpreted in terms of moduli spaces saying that 
\[
M_{K'_d} \cap \overline{M}_{g,d-3} = \emptyset,
\]
where $K_d'$ is obtained by removing $i < d-1$ edges not forming a $K_3$ configuration. Instead, when $i=d-1$, we can construct graphs $K_d''$, for which
\[
M_{K''_d} \cap \overline{M}_{g,d-3} \not = \emptyset.
\]

\medskip

The paper has the following outline. In Section \ref{mg} we give a short introduction on the theory of divisors on metric graphs, recalling main definitions and results of \cite{BN, MZ, GK}. 

The results presented in Section \ref{2} are combinatorial and deal with the computation of the gonality of graphs $K_d$ with edges removed. 

\vspace{\baselineskip}
\noindent {\bf Theorem \ref{completegen}.} Let $K_d' = K_d \setminus \{ e_1, e_2, \dots, e_i \}$ with $d\geq 3$ and $1\leq i \leq d-2$. Then $K_d'$ has gonality $d-h$ if and only if the edges we remove contain a $K_h$ but not a $K_{h+1}$ configuration. 

\vspace{0.3cm}
For $h=2$ the upper bound on the number of removed edges is sharp.

\vspace{0.3cm}
\noindent {\bf Theorem \ref{stable}.} It is possible to obtain a connected metric graph $K_d'$ of gonality $d-3$ by omitting $d-1$ edges from the complete graph $K_d$ not containing a $K_3$ configuration. 
\vspace{0.3cm}

The interpretation of divisors as chip configurations will help us in the combinatorics of the proofs and the main tools we will use are the definition of reduced divisors and their properties, see \cite{HKN, Luo}. 

Baker's Specialization map and inequality \cite{Bak} are presented in Section \ref{sp}, together with a short account on Berkovich skeleta. 

In Section \ref{pc} we explain the lifting problem of $K_d' = K_d \setminus K_h$ and of $K_d' = K_d \setminus \{e_1, e_2, \dots, e_i\}$ with $i \leq d-2$. 
 
Finally, Section \ref{hm} is devoted to the geometric interpretation of Theorem~\ref{stable}. Finite harmonic morphisms between metric graphs, their lifts to finite harmonic morphisms of curves are introduced, as in \cite{ABBR1, ABBR2}.

\begin{section}{Metric graphs and divisors} \label{mg}
The definitions and results introduced in this section are well known in the framework of tropical geometry. 
The notion of linear systems was firstly introduced on finite graphs by Baker and Norine in \cite{BN} and then on metric graphs by Mikhalkin and Zharkov in \cite{MZ} and by Gathmann and Kerber in \cite{GK}.
\begin{subsection}{Metric graphs}
A {\sl topological graph $\Gamma$} is topological realization of a finite graph. More precisely, it is a compact, connected topological space such that for every $p \in \Gamma$ there exists a neighborhood $U_p$ of $p$ homeomorphic to a \lq \lq star with $r$ branches", for a certain $r \in \mathbb{Z}_{\geq 0}$, that is the union of the segments in $\mathbb{R}^2$ connecting the origin $(0,0)$ with $r$ points, every two of which lie in different lines through the origin. The number $r$ is unique for every point of the graph, it is called the {\sl valence} and indicated with $\textrm{val}(p)$. The integer $r$ is different from $2$ only for a finite number of points in $\Gamma$. The points for which $r \not =2$ are called {\sl essential vertices}.

\begin{defi} A {\sl metric graph} is a topological graph equipped with a complete inner metric on $\Gamma \setminus V_{\infty}(\Gamma)$, where $V_{\infty}(\Gamma)$ is a set of $1$-valent vertices called {\sl infinite vertices}. We can extend the metric also to the infinite vertices, stating that their distance from all the other points of the graph is infinite.  

A vertex set $V(\Gamma)$ is a finite subset of points of $\Gamma$ containing the essential vertices. The closures of the connected components of $\Gamma \setminus V(\Gamma)$ are called {\sl edges}. We indicate with $E(\Gamma)$ the set of edges associated to $V(\Gamma)$. The edges adjacent to infinite vertices are called {\sl infinite edges}. Using the metric to every edge $e \in E(\Gamma)$ it is possible to associate a length $l(e) \in \mathbb{R} \cup \{ \infty\}$. We have $l(e) = \infty$ if and only if $e$ is an infinite edge. 

Given a point $p \in \Gamma$, we define the set of {\sl tangent directions $T_p(\Gamma)$} at $p$ as the set of connected components of $U_p \setminus \{p\}$, where $U_p$ is a neighborhood of $p$ as defined above. We make this definition independent from a choice of another neighborhood $U_p'$, by identifying the components where the intersection of $U_p \setminus \{p\}$ and $U_p' \setminus \{p\}$ is not empty. The set $T_p(\Gamma)$ consists of~$\textrm{val}(p)$ elements.  

Given a vertex set of a metric graph $\Gamma$, its genus $g$ is defined as the first Betti number, 
\[
g = |E(\Gamma)| - |V(\Gamma)| +1. 
\]
This number is independent of the choice of a vertex set. 
\end{defi}

\begin{defi}
Let $\Lambda$ be a non-trivial subgroup of $\mathbb{R}$. A metric graph $\Gamma$ is a {\sl $\Lambda$-metric graph} if the distance between any two finite essential vertices is in $\Lambda$. A {\sl $\Lambda$-point} is a point in $\Gamma$ whose distance from any finite essential vertex is in~$\Lambda$. A vertex set for a $\Lambda$-metric graph is a vertex set containing only $\Lambda$-points of $\Gamma$. In this case, the lengths of the edges lie in $\Lambda \cup \{ \infty\}$.
\end{defi}
\end{subsection}

\begin{subsection}{Divisors and complete linear systems}
\begin{defi} Let $\Gamma$ be a metric graph. A {\sl divisor} on it is an element of the free abelian group $\textrm{Div}(\Gamma)$ on the points of the graph. Any divisor can be represented in a unique way as a finite formal combination of points in~$\Gamma$ with integer coefficients:
\[
D = \sum_{p \in \Gamma} a_p \, (p), \ \ \textrm{with} \ a_p \in \mathbb{Z}. 
\]
The {\sl degree} of a divisor $D$ is the sum of its coefficients $\textrm{deg}(D) = \sum_{p \in \Gamma} a_p$. If $a_p \geq 0$ for every $p \in \Gamma$, the divisor is said to be {\sl effective}. The {\sl support of a divisor $D$} is the set of points of $\Gamma$ such that $a_p \not =0$ and it is indicated with $\textrm{supp}(D)$.
\end{defi}

\begin{defi}
A {\sl rational function} $f$ on $\Gamma$ is a continuous piecewise linear function $f: \Gamma \rightarrow \mathbb{R}$ with integer slopes and only finitely many pieces. The {\sl principal divisor} $\textrm{div}(f)$ associated to $f$ is the divisor whose coefficient at $p$ is given by the sum of the outgoing slopes of $f$ at that point. Only for a finite number of points, the coefficients are not zero.

Two divisor $D_1$, $D_2 \in \textrm{Div}(\Gamma)$ are {\sl linearly equivalent}, $D_1 \sim D_2$, if there exists a rational function $f$ such that 
\[
D_1  - D_2 = \textrm{div}(f). 
\]
The {\sl linear system} of $D$, indicated with $|D|$, is the set of the effective divisor linearly equivalent to $D$, 
\[
|D| = \{ E \in \textrm{Div}(\Gamma)| \ E \geq 0, E \sim D\}.
\]
The {\sl rank $\textrm{rk}_{\Gamma}(D)$ of a divisor} is defined as $-1$ if $D$ is not equivalent to any effective divisor, otherwise
\[
\textrm{rk}_{\Gamma}(D)= \max \{ r \in \mathbb{Z}_{\geq 0}| \ |D-E| \not = \emptyset \ \ \  \forall \ E \in \textrm{Div}(\Gamma), \ E \geq 0, \ \textrm{deg}(E)=r\}. 
\]
\end{defi}

\begin{defi}The {\sl canonical divisor} of a metric graph $\Gamma$, indicated with~$K_{\Gamma}$, is defined as
\[
K_{\Gamma} = \sum_{p \in \Gamma} \Big(\textrm{val}(p)-2 \Big)(p).
\]
We remark that by definition of graph this is a finite sum.
\end{defi}
When no confusion is possible, we will omit the symbol $\Gamma$ in the notation of rank and canonical divisor. 

\vspace{\baselineskip}
The following analogue of the Riemann-Roch for algebraic curves holds for metric graphs, see Proposition 3.1 and Corollary 3.8 of \cite{GK} and Theorem~7.3 of \cite{MZ}:
\begin{thm}[Riemann-Roch Theorem] Let $D$ be a divisor on a metric graph $\Gamma$, then
\[
\textrm{rk}_{\Gamma}(D) - \textrm{rk}_{\Gamma}(K_{\Gamma}-D) = \textrm{deg}(D) +1 -g.
\]
\end{thm}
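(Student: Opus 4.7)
The plan is to deduce the metric-graph Riemann--Roch theorem from the Baker--Norine version for finite graphs (\cite{BN}) by refining the vertex set and passing to a limit, following the strategy outlined in \cite{GK}.

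First I would reduce to the case of a $\mathbb{Q}$-metric graph with $D$ supported on $\mathbb{Q}$-points. Since rational functions with prescribed integer slopes deform continuously with the edge lengths, both $\mathrm{rk}(D)$ and $\mathrm{rk}(K_\Gamma - D)$ are insensitive to small perturbations of the lengths, so I may assume all edges have rational length and that $\mathrm{supp}(D) \cup \mathrm{supp}(K_\Gamma - D)$ consists of $\mathbb{Q}$-points. Enlarging $V(\Gamma)$ to contain this support, rescaling so that all lengths are positive integers, and subdividing each edge of length $n$ into $n$ unit segments, I obtain a finite combinatorial graph $G$. Inserting $2$-valent vertices does not change the canonical divisor (each such vertex contributes $\mathrm{val}-2 = 0$), so $K_G = K_\Gamma$. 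Baker--Norine then gives
\[
r_G(D) - r_G(K_G - D) = \deg(D) + 1 - g.
\]

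The heart of the argument is to verify $r_\Gamma(D) = r_G(D)$, and similarly for $K_\Gamma - D$. The inequality $r_\Gamma(D) \geq r_G(D)$ is straightforward: any integer-affine rational function on $G$ extends linearly to $\Gamma$, so effective divisors equivalent to $D$ on $G$ remain equivalent to $D$ on $\Gamma$. For the reverse, given an effective test divisor $E$ on $\Gamma$ of degree $r$, I would slide the points of $\mathrm{supp}(E)$ along their tangent directions to nearby $\mathbb{Q}$-points, absorbing the motion into the rational function witnessing $|D - E| \neq \emptyset$; a further subdivision turns these points into vertices of an enlarged finite graph $G'$, and stability of finite-graph rank under subdivision (which is proved within \cite{BN}) identifies the rank on $G'$ with the rank on $G$.

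The main obstacle is precisely this last step: rigorously controlling real-valued test divisors and showing that the metric-graph rank is attained on some rational refinement rather than being strictly larger. Once the identity $r_\Gamma(D) = r_G(D)$ is secured, together with the analogous identity for $K_\Gamma - D$, the theorem follows at once from Baker--Norine. An alternative route, taken by Mikhalkin--Zharkov \cite{MZ}, is to work directly on $\Gamma$ using tropical theta functions and avoid the subdivision argument altogether, but the reduction above has the advantage of exposing how the metric statement is a genuine continuous enlargement of the combinatorial one.
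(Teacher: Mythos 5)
The paper does not prove this statement; it quotes it with citations to Proposition~3.1 and Corollary~3.8 of \cite{GK} and Theorem~7.3 of \cite{MZ}, so there is no internal proof to compare against. Your sketch correctly identifies the Gathmann--Kerber route (reduce to the Baker--Norine theorem for finite graphs by rational approximation and subdivision) and correctly names Mikhalkin--Zharkov as the alternative, so strategically you are aligned with the sources the paper relies on.

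However, as a proof your proposal has a genuine gap, and it is exactly the one you flag yourself: everything you write before the identity $r_\Gamma(D) = r_G(D)$ is the easy direction, and that identity together with the perturbation step \emph{is} the theorem. Two claims are asserted without argument. First, that $\mathrm{rk}(D)$ and $\mathrm{rk}(K_\Gamma - D)$ are ``insensitive to small perturbations of the lengths'' --- this is a semicontinuity statement about how linear systems vary in families of metric graphs, and it is not obvious; a priori the rank could jump down when an edge length becomes irrational, and ruling this out requires an argument (in \cite{GK} this passage from $\mathbb{Q}$-graphs to arbitrary metric graphs is precisely the content of their Appendix and was the delicate point). Second, the inequality $r_\Gamma(D) \le r_G(D)$: you must show that for \emph{every} effective $E$ of degree $r_G(D)+1$ supported at arbitrary (possibly irrational) points of $\Gamma$ with $|D-E| = \emptyset$ on $G$, the emptiness persists on $\Gamma$, and conversely that a witness on $\Gamma$ can be pushed to $\mathbb{Q}$-points. ``Sliding the points and absorbing the motion into the rational function'' is the right intuition, but one needs a uniform mechanism --- e.g.\ reduced divisors, or the explicit description of $|D|$ as a polyhedral complex --- to guarantee that the supremum over real test divisors is attained at rational ones. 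Without these two lemmas the reduction to Baker--Norine proves nothing, so the proposal should be regarded as a correct plan rather than a proof.
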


\end{subsection}

\begin{subsection}{Chip--firing moves} Divisors and linear equivalence of divisors can be interpreted in terms of \lq \lq configuration of chips" and \lq \lq chip-firing moves" in a more visual way. A divisor in fact is a configuration of chips on the graph, where for every point $p \in \Gamma$ there is a pile of $D(p)$ chips or $-D(p)$ anti-chips if $D(p)$ is negative.

Chips can move in the graph by chip-firing moves: if $p$ is a point of $\Gamma$ then $\textrm{val}(p)$ chips can move from it along the tangent directions with the same speed. When a chip and an anti-chip meet, they cancel out. 

Two divisors are linearly equivalent if the chip configuration corresponding to one divisor can be obtained from the configuration corresponding to the other via chip-firing moves. 

In our following arguments we will often treat divisors as configuration of chips and linear equivalence of divisors as sequence of chip-firing moves. 
\end{subsection}

\begin{subsection}{Reduced divisors and rank--determining sets} One of the main tools used in the proof of the Riemann--Roch Theorem for graphs in \cite{BN} is the definition of divisor reduced with respect to a vertex. In \cite{HKN} the analogous definition is given for metric graphs. 

\begin{defi}
Let $\Gamma$ be a metric graph and $X$ be a closed connected subset of $\Gamma$. Given $p \in \partial X$, the {\sl outgoing degree of $X$ at $p$} is defined as the maximum number of internally disjoint segments in $\Gamma \setminus X$ with an open end in $p$, so the number of tangent directions leaving $X$ at $p$. Let $D$ be a divisor on $\Gamma$. A boundary point $p \in \partial X$ is {\sl saturated with respect to $X$ and $D$} if $D(p) \geq \textrm{outdeg}_X(p)$, and {\sl non-saturated} otherwise. 
A divisor $D$ is {\sl $p$-reduced} if it is effective in $\Gamma \setminus \{p\}$ and each closed connected subset $X \subseteq \Gamma \setminus \{ p \}$ contains a non-saturated boundary point.
\end{defi}

\begin{thm}[Proposition 7.5 in \cite{MZ}]
Let $D$ be a divisor on a metric graph $\Gamma$. For every point $p \in \Gamma$ there exists a unique $p$-reduced divisor linearly equivalent to $D$. 
\end{thm}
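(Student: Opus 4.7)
The plan is to handle uniqueness and existence separately.

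For uniqueness, suppose $D_1$ and $D_2$ are both $p$-reduced and $D_1 - D_2 = \textrm{div}(f)$ for some rational function $f$, and assume for contradiction that $f$ is non-constant. Then the level sets $\{f = \max f\}$ and $\{f = \min f\}$ are disjoint non-empty closed sets, so $p$ belongs to at most one of them. Pick a connected component $X$ of the other; say $X \subseteq \{f = \max f\}$. At each $q \in \partial X$, outgoing slopes of $f$ along tangent directions into $X$ vanish, while outgoing slopes along tangent directions leaving $X$ are negative integers, so $\textrm{div}(f)(q) \leq -\textrm{outdeg}_X(q)$. Rearranging $D_1 - D_2 = \textrm{div}(f)$ gives
\[
D_2(q) \;\geq\; D_1(q) + \textrm{outdeg}_X(q) \;\geq\; \textrm{outdeg}_X(q),
\]
using $D_1 \geq 0$ on $\Gamma \setminus \{p\}$. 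Hence every boundary point of $X$ is saturated for $D_2$, contradicting its $p$-reducedness; the symmetric case $X \subseteq \{f = \min f\}$ contradicts $D_1$. Therefore $f$ is constant and $D_1 = D_2$.

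For existence I would argue via iterated firing. Starting from any representative $D' \sim D$ that is effective on $\Gamma \setminus \{p\}$ (obtained by pushing chips along paths toward $p$, using connectedness of $\Gamma$), check whether $D'$ is $p$-reduced. If not, the union $X$ of all closed connected subsets of $\Gamma \setminus \{p\}$ whose boundary is saturated for $D'$ is itself such a subset, and one may subtract from $D'$ the principal divisor of
\[
f_\varepsilon(x) \;=\; \min\bigl(\varepsilon,\, d(x,X)\bigr),
\]
for $\varepsilon > 0$ chosen maximally subject to the result staying effective off $p$. Maximality forces some boundary point of $X$ to become strictly non-saturated with respect to the new divisor, so the maximal saturated set strictly shrinks; iterating one expects to eventually reach a $p$-reduced representative.

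The hard part is making this iteration rigorous in the continuous metric setting, where in principle chips could approach $p$ without ever reaching it and the maximal saturated sets could shrink along a non-terminating sequence. The cleanest workaround is to pass to a sufficiently fine rational subdivision of $\Gamma$ whose vertex set contains the supports of $D$ and all intermediate divisors, invoke the existence of a $p$-reduced divisor for finite graphs from \cite{BN}, and then verify that the resulting divisor satisfies the saturation condition on arbitrary closed subsets of $\Gamma$ by refining the subdivision. Compactness of the space of effective representatives with prescribed coefficient at $p$, combined with the uniqueness established above, ensures that the limiting divisor is well-defined and independent of the subdivision chosen.
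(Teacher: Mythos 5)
This statement is not proved in the paper: it is quoted as Proposition~7.5 of \cite{MZ}, so there is no internal argument to compare yours against, and I will judge it on its own terms. Your uniqueness half is correct and is the standard argument. One small remark: you do not actually need the outgoing slopes in directions ``into $X$'' to vanish (a tangent direction at a boundary point need not stay inside the level set); it suffices that every outgoing slope at a global maximum of $f$ is $\leq 0$ and that each of the $\textrm{outdeg}_X(q)$ directions leaving $X$ has slope $\leq -1$, which already gives $\textrm{div}(f)(q) \leq -\textrm{outdeg}_X(q)$ and hence the saturation of every boundary point of $X$ for $D_2$.

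The existence half has a genuine gap, which you flag but do not close. The iterated-firing loop is not shown to terminate: in the metric setting the maximal saturated set can shrink by arbitrarily small amounts, and the quantity you are implicitly decreasing is real-valued, so ``iterating one expects to eventually reach'' is exactly the point at issue. The proposed workaround via rational subdivisions does not repair this as stated, for three reasons. First, the support of $D$ and of the intermediate representatives need not consist of rational ($\Lambda$-)points, so the required subdivision may not exist. Second, a divisor that is $p$-reduced on a finite subdivision in the Baker--Norine sense only passes the saturation test against subsets of the chosen vertex set, whereas the metric definition quantifies over \emph{all} closed connected subsets; refining the subdivision changes the reduced representative, so you are really constructing a sequence of divisors, not verifying a property of one. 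Third, the concluding compactness step needs the $p$-reduced condition to be preserved under limits of this sequence; uniqueness only tells you that \emph{if} the limit is reduced it is the unique one, not that it is reduced, and that semicontinuity is precisely the nontrivial content. A rigorous existence proof (as in \cite{HKN} or \cite{MZ}) either controls the firing step so that a discrete quantity strictly improves (e.g.\ the amount of mass that has reached $p$ together with a combinatorial complexity of the configuration), or characterizes the reduced representative variationally as $D + \textrm{div}(f_0)$ for an extremal rational function $f_0$ and verifies the saturation condition directly for that $f_0$. As written, your existence argument is a plan rather than a proof.
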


Two properties of reduced divisors will be particularly relevant for the combinatorial results presented in this paper, and we briefly recall them.  

\begin{lem} If $D$ is a $p$-reduced divisor, then $D(p) \geq E(p)$ for every $E \sim D$, $E$ effective in $\Gamma \setminus \{p\}$. 
\end{lem}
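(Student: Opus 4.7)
The approach I would take is a contradiction argument using the rational function witnessing the linear equivalence, together with a standard \emph{maximum principle} construction that produces a forbidden closed subset $X$.

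First I would assume for contradiction that $E(p) > D(p)$ and pick a rational function $f$ on $\Gamma$ with $E - D = \textrm{div}(f)$. At $p$ the sum of outgoing slopes of $f$ equals $E(p) - D(p) > 0$, so $f$ has at least one tangent direction at $p$ of strictly positive slope. In particular $f(p) < M := \max_\Gamma f$ (the maximum is attained because $\Gamma$ is compact and $f$ is continuous piecewise linear with finitely many pieces), so $p \notin f^{-1}(M)$. Let $X$ be any connected component of $f^{-1}(M)$. Then $X$ is a nonempty closed connected subset of $\Gamma \setminus \{p\}$, exactly the kind of set that must contain a non-saturated boundary point by the hypothesis that $D$ is $p$-reduced.

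The core step is to prove that every $q \in \partial X$ is in fact \emph{saturated}, which yields the desired contradiction. For any tangent direction $v \in T_q(\Gamma)$ which does not leave $X$, the initial segment lies in $X \subseteq f^{-1}(M)$ and $f$ has slope $0$ along it. For each of the $\textrm{outdeg}_X(q)$ tangent directions leaving $X$, the slope $s$ of $f$ along an initial segment is nonpositive (otherwise $f$ would exceed $M$ near $q$), and $s = 0$ is impossible because that would force $f \equiv M$ on an initial segment, enlarging $X$ beyond its connected component. Hence $s \leq -1$ along every leaving direction, and summing over all tangent directions at $q$ gives
\[
(\textrm{div}(f))(q) \leq -\textrm{outdeg}_X(q).
\]
Since $q \neq p$, the effectivity of $E$ on $\Gamma \setminus \{p\}$ gives $D(q) = E(q) - (\textrm{div}(f))(q) \geq \textrm{outdeg}_X(q)$, so $q$ is saturated with respect to $X$ and $D$, contradicting $p$-reducedness.

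The main obstacle is the small technical point that a tangent direction leaving $X$ carries slope at most $-1$ rather than merely $\leq 0$; everything else is a direct use of compactness, the definition of a principal divisor as a sum of outgoing slopes, and the effectivity of $E$ away from $p$. This sharpening relies crucially on the integrality of slopes built into the definition of a rational function on $\Gamma$, together with the choice of $X$ as a full connected component of $f^{-1}(M)$ rather than an arbitrary subset.
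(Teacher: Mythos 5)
The paper does not prove this lemma: it is stated as a recalled property of reduced divisors, with the surrounding references pointing to \cite{HKN} and \cite{Luo}, so there is no in-paper argument to compare against. Your maximum-principle proof is correct and is essentially the standard argument from that literature. The chain of deductions holds up: compactness of $\Gamma$ gives the maximum $M$ of $f$; positivity of $E(p)-D(p)=\mathrm{div}(f)(p)$ forces a strictly positive outgoing slope at $p$ and hence $p\notin f^{-1}(M)$; a connected component $X$ of $f^{-1}(M)$ is then a closed connected subset of $\Gamma\setminus\{p\}$; and at any $q\in\partial X$ the leaving directions carry slope $\leq -1$ (your key observation that slope $0$ would absorb the initial segment into the component $X$, so such a direction cannot contribute to $\mathrm{outdeg}_X(q)$, is exactly the point that makes the integrality of slopes bite), giving $\mathrm{div}(f)(q)\leq -\mathrm{outdeg}_X(q)$ and hence $D(q)=E(q)-\mathrm{div}(f)(q)\geq \mathrm{outdeg}_X(q)$ by effectivity of $E$ away from $p$. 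Every boundary point of $X$ is therefore saturated, contradicting $p$-reducedness. The only cosmetic remark is that $\partial X\neq\emptyset$ should be noted (it follows since $p\notin X$ and $\Gamma$ is connected), but this does not affect correctness.
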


Let $S \subseteq \textrm{supp}(D)$, following \cite{Luo}, we use the notation $\mathcal{U}_{S,p}$ to indicate the connected component of $\Gamma \setminus S$ containing $p$. 

\begin{lem}[Lemma 2.4 in \cite{Luo}] \label{lemma} Let $\Gamma$ be a metric graph, $p$ be a point of $\Gamma$ and $D$ and effective divisor in $\Gamma$. Then $D$ is $p$-reduced if and only if for any subset $S$ of $\textrm{supp}(D) \setminus \{p\}$, the set $\mathcal{U}^c_{S,p}$ contains a non-saturated boundary point with respect to $D$ and $\mathcal{U}^c_{S,p}$. 
\end{lem}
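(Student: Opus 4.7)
The plan is to exploit the description of $\mathcal{U}_{S,p}^c$ as the union of $S$ and all connected components of $\Gamma\setminus S$ other than $\mathcal{U}_{S,p}$. This set is closed in $\Gamma$ but need not be connected, so both directions will require relating $\mathcal{U}_{S,p}^c$ to genuinely connected subsets of $\Gamma\setminus\{p\}$.

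For the direction $(\Rightarrow)$, assume $D$ is $p$-reduced and fix $S\subseteq \textrm{supp}(D)\setminus\{p\}$. I would pick any connected component $X$ of $\mathcal{U}_{S,p}^c$; since $p\in\mathcal{U}_{S,p}$, this $X$ is a closed connected subset of $\Gamma\setminus\{p\}$, so by definition of $p$-reducedness it has a non-saturated boundary point $q$. I would then transfer $q$ into a non-saturated boundary point of the full set $\mathcal{U}_{S,p}^c$ by checking two facts: (i) $q$ must lie in $S$, because any interior point of a component of $\Gamma\setminus S$ has all of its tangent directions inside that component and hence cannot be a boundary point; and (ii) each point of $S$ belongs to exactly one component of $\mathcal{U}_{S,p}^c$, since any two components sharing a point of $S$ would in fact be the same component. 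Combining (i) and (ii) forces the tangent directions at $q$ that leave $X$ to land in $\mathcal{U}_{S,p}$, giving $\textrm{outdeg}_X(q)=\textrm{outdeg}_{\mathcal{U}_{S,p}^c}(q)$ and $q\in\partial\mathcal{U}_{S,p}^c$, as required.

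For the direction $(\Leftarrow)$, I would argue the contrapositive. Suppose $D$ is not $p$-reduced and let $Y$ be a nonempty closed connected subset of $\Gamma\setminus\{p\}$ all of whose boundary points are saturated. Each $q\in\partial Y$ then satisfies $D(q)\geq \textrm{outdeg}_Y(q)\geq 1$, so $S:=\partial Y$ is contained in $\textrm{supp}(D)\setminus\{p\}$. Because $\partial Y$ separates the interior of $Y$ from $\Gamma\setminus Y$ in $\Gamma$, the component $\mathcal{U}_{S,p}$ of $\Gamma\setminus\partial Y$ containing $p$ is contained in $\Gamma\setminus Y$; in particular $Y\subseteq \mathcal{U}_{S,p}^c$ and $\partial\mathcal{U}_{S,p}^c\subseteq\partial Y$. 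For any such boundary point $q$, the tangent directions leaving $\mathcal{U}_{S,p}^c$ (i.e.\ pointing into $\mathcal{U}_{S,p}\subseteq\Gamma\setminus Y$) form a subset of those leaving $Y$, so $\textrm{outdeg}_{\mathcal{U}_{S,p}^c}(q)\leq \textrm{outdeg}_Y(q)\leq D(q)$. Hence no boundary point of $\mathcal{U}_{S,p}^c$ is non-saturated, contradicting the assumed criterion.

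The main obstacle is the bookkeeping for the disconnected case in the forward direction: one must verify that the outgoing tangent directions at a boundary point of a single component $X$ do not ``leak'' into some other component of $\mathcal{U}_{S,p}^c$ across a point of $S$. Once the topological separation between $\mathcal{U}_{S,p}$ and $\mathcal{U}_{S,p}^c$ at points of $S$ is clarified, both directions reduce to the same one-line comparison of outgoing degrees.
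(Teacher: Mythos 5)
The paper does not prove this statement at all: it is imported verbatim as Lemma 2.4 of Luo's paper \cite{Luo}, so there is no in-paper argument to compare against. Judged on its own terms, your plan is correct and is essentially the standard proof. The two facts you isolate are exactly the right ones: every boundary point of $\mathcal{U}^c_{S,p}$ (or of any of its connected components) lies in $S$, because the components of $\Gamma\setminus S$ other than $\mathcal{U}_{S,p}$ are \emph{open} subsets of $\mathcal{U}^c_{S,p}$; and at such a point $q$ the only tangent directions that exit the component $X$ of $\mathcal{U}^c_{S,p}$ containing $q$ are those pointing into $\mathcal{U}_{S,p}$, which gives $\textrm{outdeg}_X(q)=\textrm{outdeg}_{\mathcal{U}^c_{S,p}}(q)$ and makes the forward direction work. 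The backward direction via the contrapositive with $S=\partial Y$ is also sound: saturation forces $\partial Y\subseteq\textrm{supp}(D)\setminus\{p\}$ (and in particular $\partial Y$ is finite), the separation argument gives $Y\subseteq\mathcal{U}^c_{S,p}$ and $\partial\mathcal{U}^c_{S,p}\subseteq\partial Y$, and the outgoing-degree comparison closes it.

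Two small points to tighten. First, your justification (ii) in the forward direction --- ``each point of $S$ belongs to exactly one component of $\mathcal{U}^c_{S,p}$'' --- is a tautology as stated; the fact you actually need is that for any component $C'$ of $\Gamma\setminus S$ other than $\mathcal{U}_{S,p}$ whose closure contains $q$, the set $C'\cup\{q\}$ is connected and contained in $\mathcal{U}^c_{S,p}$, hence lies inside the same component $X$ as $q$; therefore no tangent direction at $q$ can ``leak'' into a different component of $\mathcal{U}^c_{S,p}$. Second, the statement as quoted must implicitly exclude $S=\emptyset$ (otherwise $\mathcal{U}^c_{S,p}=\emptyset$ has no boundary point and the forward implication fails vacuously); in your backward direction you should note that $\partial Y\neq\emptyset$ because $Y$ is a nonempty proper closed subset of the connected graph $\Gamma$, so the $S$ you produce is indeed admissible. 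Neither issue affects the substance of the argument.
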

Let $D$ be a divisor such that $D(q) \geq 0$ for every $q \in \Gamma \setminus \{p\}$. It is possible to check whether $D$ is $p$-reduced using the so called \lq\lq {\sl burning algorithm}" (see \cite[\S 1]{AB}). We consider the divisors as set of firefighters, so at every point $q$ of the graph there are $D(q)$ firefighters. We imagine to start a fire at the point $p$. The fire will spread along the graph and it can be stopped only if at a point $q$ there are more firefighters than directions from which the fire is coming. If it is not possible to stop the fire and the whole graph burns down, then the divisor is $p$-reduced. 

\vspace{\baselineskip}
In the just mentioned paper, Luo also gave the following definition:
\begin{defi}
A {\sl rank--determining set} is a non-empty subset $A$ of $\Gamma$ such that for every divisor $D$
\[
\textrm{rk}(D) = \max \left\{   r \in \mathbb{Z}_{\geq 0}\, \Big|  \begin{array}{l}  \ |D-E| \not = \emptyset, \ \forall \ E \in \textrm{Div}(\Gamma), \ \textrm{supp}(E) \subseteq A, \\
 \ E \geq 0, \ \textrm{deg}(E)=r \\
\end{array}
\right\}.
\] 
\end{defi}
He gave a complete characterization of rank--determining sets for a graph and in particular he proved that there exists always a rank--determining set consisting of $g+1$ points. For our purposes the following results suffices. 
\begin{thm} \label{rankdet} Let $V(\Gamma)$ be a vertex set of $\Gamma$. Then $V(\Gamma)$ is a rank-determining set.
\end{thm}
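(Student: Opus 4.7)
The plan is to prove the non-trivial direction $\textrm{rk}(D) \geq r^{*}$, where $r^{*}$ denotes the maximum on the right-hand side of the statement; the reverse inequality is immediate because restricting the test divisors $E$ to those supported on $V(\Gamma)$ only enlarges the maximum. I would argue the contrapositive: if some effective $E$ of degree $r$ on $\Gamma$ certifies $\textrm{rk}(D) < r$ via $|D - E| = \emptyset$, then such a certificate exists with $\textrm{supp}(E) \subseteq V(\Gamma)$.

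The whole argument reduces to the following local claim: let $F \in \textrm{Div}(\Gamma)$, and let $p$ lie in the interior of an edge $e$ with endpoints $v_1, v_2 \in V(\Gamma)$; then $|F - (p)| = \emptyset$ forces $|F - (v_1)| = \emptyset$ or $|F - (v_2)| = \emptyset$. Granting the claim, I induct on the weight $m_E := \sum_{q \notin V(\Gamma)} E(q)$: if $m_E > 0$, pick a non-vertex $p$ with $E(p) \geq 1$, write $E = (p) + E''$, set $F := D - E''$, and apply the claim to find $v_i \in \{v_1, v_2\}$ with $|D - (v_i) - E''| = \emptyset$; the new certificate $(v_i) + E''$ has strictly smaller weight. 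Finitely many iterations produce an effective divisor supported on $V(\Gamma)$ with the desired emptiness property.

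To prove the local claim I would argue by contradiction. Assume effective $G_i \sim F - (v_i)$ exist for $i = 1, 2$, so that $G_i + (v_i) \sim F$ are effective representatives carrying a chip at $v_i$. Let $F_p$ be the unique $p$-reduced divisor equivalent to $F$. The maximality property of reduced divisors recalled immediately before Lemma \ref{lemma} translates the hypothesis $|F - (p)| = \emptyset$ into $F_p(p) \leq 0$. I would then exploit the burning algorithm description of $F_p$: a fire kindled at $p$ must engulf the whole graph and in particular travel along $e$ in both directions, reaching $v_1$ and $v_2$ with too few firefighters to stop it. This yields sharp upper bounds on $F_p$ along $e$ which, combined with the existence of effective representatives $G_1, G_2$ carrying chips at $v_1$ and $v_2$, permit constructing via a chip-firing move supported on a neighborhood of $e$ an effective divisor equivalent to $F$ with positive coefficient at $p$, contradicting $F_p(p) \leq 0$.

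The main obstacle is exactly this final chip-firing construction. The natural move transporting a chip from $v_i$ to $p$ along $e$ simultaneously perturbs the divisor at every other edge incident to $v_i$, and reconciling these side effects with the burning-algorithm constraints on $F_p$ and with the effective representatives $G_1, G_2$ is the technical heart of the argument. Once the local claim is established, the induction described above concludes the proof of Theorem \ref{rankdet}.
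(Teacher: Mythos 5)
First, note that the paper does not actually prove Theorem \ref{rankdet}: it is quoted from Luo's work on rank-determining sets, so I am comparing your proposal against the standard argument rather than against a proof in the text. Your global strategy --- the easy inequality plus an induction on the weight $\sum_{q \notin V(\Gamma)} E(q)$ of a certificate, driven by the local claim --- is sound, and the local claim itself is true for edges with distinct endpoints. The genuine gap is in your proof of the local claim, precisely at the step you yourself flag as the ``technical heart'': manufacturing, by a chip-firing move supported near $e$, an effective representative of $F$ with a chip at $p$. Such a local move need not exist (moving a chip from $v_i$ into $e$ disturbs every other edge incident to $v_i$, exactly as you worry), and your description of the burning stage is already inaccurate: Dhar's fire started at $p$ does not in general reach both $v_1$ and $v_2$ ``with too few firefighters'' --- it is extinguished at the first point of $\mathrm{supp}(F_p)$ it meets on each side of $p$, and one of the two endpoints may well be saturated.

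The correct way to close the local claim is shorter and avoids any construction. Since $F_p(p)=0$ and the fire started at $p$ can only leave the closed edge $\bar{e}$ through $v_1$ or $v_2$, and since a vertex of $e$ can acquire a second burnt tangent direction only after the fire has already escaped $\bar{e}$, there must be at least one index $i$ for which the half-open segment $(p,v_i]$ carries no chip of $F_p$; in particular $F_p(v_i)=0$. For that $i$, Dhar's algorithm started at $v_i$ immediately burns all of $[v_i,p]$ and from then on dominates the fire started at $p$; as the latter burns all of $\Gamma$, so does the former, so $F_p$ is also $v_i$-reduced. By uniqueness of reduced divisors $F_{v_i}=F_p$, hence $F_{v_i}(v_i)=0$ and $|F-(v_i)|=\emptyset$, which is exactly the local claim --- no effective representative through $p$ ever needs to be produced. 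One further caveat: the local claim fails for a loop edge ($v_1=v_2$); on a circle with a one-point vertex set, $F=(v)$ has $|F-(v)|\neq\emptyset$ but $|F-(p)|=\emptyset$ for every other $p$, and such a $V(\Gamma)$ is indeed not rank-determining. So your induction requires the vertex set to induce a loopless model, which holds automatically for the graphs $K_d'$ considered in this paper.
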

\end{subsection}
\end{section}

\begin{section}{Combinatorial results} \label{2} In this section we focus on combinatorial results concerning the gonality of metric graphs.  Let $\Gamma$ be a metric graph. As in the theory of divisors on algebraic curves we use the notation $g^r_d$ to indicate a linear system of degree~$d$ and rank $r$. 
\begin{defi} The {\sl gonality} $\textrm{gon}(\Gamma)$ of a graph $\Gamma$ is the smallest integer $d$ for which there exists a $g^1_d$. 
\end{defi}
With the notation $K_d$ we indicate the complete metric graph on $d$ vertices where the edges have an arbitrary length; we fix as vertex set the essential vertices. 

\begin{exam} \label{exgon} The complete graph $K_d$ has gonality $d-1$. In fact, for every subset of $d-1$ vertices $J = \{x_1, x_2, \dots, x_{d-1}\}$, the divisor $D_J = \sum_{i=1}^{d-1} (x_i)$ has rank~$1$ and degree $d-1$.   
\end{exam}   
\begin{subsection}{A lower bound} 

Sofie Burggraeve, a student of Filip Cools, in her unpublished master thesis  at KU Leuven with title \lq \lq Tropical Geometry, Linear Systems on Metric Graphs" proved that if a metric graph contains the complete graph $K_d$ as a subgraph, then its gonality is at least $d-1$. She also proved a generalized statement, namely that if the metric graph contains $d$ distinct vertices such that every pair of these $d$ vertices is linked with a path and such that all these $\binom{d}{2}$ paths are disjoint except at their boundary points, its gonality is at least $d-1$. 

The following result, similar but a bit stronger than the one just stated, also gives a lower bound for the gonality of a metric graph $\Gamma$ in terms of the number of disjoint paths that can be found between vertices of the graph. 

\begin{prop} Let $\Gamma$ be a metric graph and let $\{v_1, v_2, \dots, v_n\}$ be a subset of $V(\Gamma)$ such that for every $i$ the vertex $v_i$ is connected to every $v_j$ with $j \not = i$ by $n-1$ disjoint paths except for the end points. Then the gonality of $\Gamma$ is at least $n-1$.  
\end{prop}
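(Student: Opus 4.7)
My approach is proof by contradiction, via Luo's characterization of reduced divisors combined with Menger's theorem. Suppose $\textrm{gon}(\Gamma) \le n-2$ and pick a divisor $D$ with $\textrm{deg}(D) = d \le n-2$ and $\textrm{rk}(D) \ge 1$. Take the $v_1$-reduced representative $D_1 \sim D$, so that $D_1 \ge 0$ and $D_1(v_1) \ge 1$ (applying the extremal property of reduced divisors to any effective representative of $D$ passing through $v_1$). Because $\sum_{i=1}^{n} D_1(v_i) \le d < n$, pigeonhole yields some $v_j$ with $D_1(v_j) = 0$; after relabelling I may take $j = 2$. The contradiction will come from proving that $D_1$ is simultaneously $v_2$-reduced, for then uniqueness of reduced divisors forces $D_1$ to equal the $v_2$-reduced divisor $D_2 \sim D$, while $\textrm{rk}(D) \ge 1$ requires $D_2(v_2) \ge 1$.

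To verify $v_2$-reducedness of $D_1$, I would check Luo's criterion (Lemma \ref{lemma}): for each non-empty $S \subseteq \textrm{supp}(D_1)$, exhibit a non-saturated boundary point of $\mathcal{U}^{c}_{S, v_2}$. The crucial observation is that the hypothesis of $n-1$ internally disjoint paths between any two $v_i,v_j$ gives, by Menger's theorem, the vertex connectivity bound $\kappa(v_i, v_j) \ge n-1$; since $|S| \le d \le n-2 < n-1$, removing $S$ cannot disconnect any two of the $v_i$ in $\Gamma \setminus S$. I would then split the argument according to whether $v_1 \in S$. When $v_1 \notin S$, Menger gives $\mathcal{U}_{S, v_1} = \mathcal{U}_{S, v_2}$, and Luo's criterion applied to the known $v_1$-reducedness of $D_1$ transports a non-saturated boundary point verbatim to the $v_2$-setting.

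The substantive case is $v_1 \in S$, where I claim $v_1$ itself is non-saturated. Setting $S_0 = S \setminus \{v_1\}$ with $|S_0| = |S| - 1$, the pairwise internal-disjointness of the $n-1$ paths from $v_1$ to $v_2$ forces all but at most $|S_0|$ of them to be ``clean'' (no $S_0$-points in their interiors); each clean path supplies a tangent direction at $v_1$ whose initial segment lies in $\Gamma \setminus S$ and reaches $v_2$, yielding $\textrm{outdeg}_{\mathcal{U}^{c}_{S, v_2}}(v_1) \ge n - |S|$. Since every $q \in S_0$ satisfies $D_1(q) \ge 1$, chip conservation gives $D_1(v_1) \le d - |S_0| \le n - 1 - |S|$, strictly below the outdegree. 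This Case 2 counting is the main obstacle of the proof: Menger's theorem, Luo's criterion, and uniqueness of reduced divisors are standard inputs, but matching the outdegree lower bound at $v_1$ against the sharp chip upper bound genuinely exploits the internal disjointness of the paths, and without it one loses exactly one chip from the margin and the contradiction disappears.
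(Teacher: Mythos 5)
Your proof is correct and follows essentially the same route as the paper: reduce at $v_1$, locate a chip-free vertex $v_2$, and verify $v_2$-reducedness via Luo's criterion with the same case split on whether $v_1 \in S$, the key step in the latter case being the count of surviving internally disjoint paths against the chips absorbed by $S$. The only differences are cosmetic --- you phrase the conclusion via uniqueness of reduced divisors and track $|S_0|$ where the paper tracks $n-2-D(v_1)$, and you work with the $n-1$ paths from $v_1$ to $v_2$ specifically --- but both yield the same strict inequality $D(v_1) < \textrm{outdeg}(v_1)$.
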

\begin{proof} Let $D$ be a divisor of degree $n-2$, we prove that it cannot have rank~$1$. We reduce it with respect to $v_1$ and suppose that $D(v_1) \geq 1$, otherwise we can conclude. Since the degree is $n-2$ there is at least a vertex $v_j$ such that $D(v_j) = 0$ and one of the paths that connects it with $v_1$ does not contain any point of the support of $D$. Using Lemma~\ref{lemma}, we prove  that $D$ is also $v_j$-reduced. Let $S \subseteq \textrm{supp}(D) \setminus \{v_j\}$, we recall that with the notation $\mathcal{U}^c_{S,v}$we indicate the complement of the connected component of $\Gamma \setminus S$ containing $v$. We distinguish two cases: 
\begin{enumerate}
\item $v_1 \not \in S$. \\
In this case $\mathcal{U}^c_{v_1, S} = \mathcal{U}^c_{v_j, S}$ and since $D$ is $v_1$-reduced, by Lemma \ref{lemma} we can find a non-saturated element. 
\item $v_1 \in S$. \\
We prove that $v_1$ is the non-saturated boundary point for $\mathcal{U}^c_{v_j, S}$. In fact $\textrm{val}(v_1) \geq n-1$ since at least $n-1$ disjoint paths are leaving from $v_1$, namely the ones connecting it with the vertices $\{v_2, \dots, v_n\}$. The degree of the divisor is $n-2$, so at most $n-2-D(v_1)$ of these edges can contain chips in points different from $v_1$. Therefore 
\begin{eqnarray*}
\textrm{outdeg}_{\mathcal{U}^c_{v_j, S}}(v_1) &\geq & \textrm{val}(v_1) - (n-2 - D(v_1)) \\
& \geq & n -1 - n + 2 + D(v_1) > D(v_1), \\
\end{eqnarray*}
and $v_1$ is non-saturated. 
\end{enumerate}
The divisor is also $v_j$-reduced and since $D(v_j) = 0$ it cannot have rank $1$, so the gonality of $\Gamma$ has to be at least $n-1$.
\end{proof}
\begin{cor} \label{complete} Let $\Gamma$ be a metric graph containing $K_d$ as a subgraph. The gonality of $\Gamma$ is at least $d-1$. 
\end{cor}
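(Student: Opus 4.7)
The plan is to obtain Corollary \ref{complete} as an immediate application of the preceding proposition, with $n = d$. The only thing requiring verification is the hypothesis: namely, that the $d$ vertices of the embedded $K_d$ can be chosen as vertices of $\Gamma$, and that between any two of them there are $d-1$ paths in $\Gamma$ which are pairwise internally disjoint.

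First I would fix $\{v_1, v_2, \ldots, v_d\}$ to be the vertex set of the $K_d$ subgraph inside $\Gamma$, enlarging $V(\Gamma)$ if necessary so that these points are genuine vertices (this is harmless, as adding $2$-valent points does not affect the metric graph structure or the gonality). For any ordered pair $v_i \neq v_j$ I would then exhibit the following $d-1$ paths between them, all living inside the $K_d$ subgraph and hence inside $\Gamma$: the single edge $v_iv_j$, together with, for each of the $d-2$ remaining vertices $v_k$, the length-two path $v_i \mathbin{\text{---}} v_k \mathbin{\text{---}} v_j$. Their interiors are either empty or consist of the single intermediate vertex $v_k$, and since the $v_k$'s are distinct, these $d-1$ paths are pairwise internally disjoint.

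Having verified the hypothesis of the preceding proposition with $n = d$, its conclusion gives $\mathrm{gon}(\Gamma) \geq d - 1$, which is exactly the claim. I do not anticipate a real obstacle here: the whole content is already packaged in the preceding proposition, and the only substantive ingredient is the well-known fact that $K_d$ is $(d-1)$-connected in the internally-disjoint-paths sense, which is immediate from the explicit family above rather than requiring any appeal to Menger's theorem.
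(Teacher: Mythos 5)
Your proposal is correct and matches the paper's intent exactly: the paper states Corollary \ref{complete} without proof precisely because it follows from the preceding proposition with $n=d$, the hypothesis being verified by the standard family of $d-1$ internally disjoint paths (the edge $v_iv_j$ plus the two-edge paths through each remaining vertex) that you exhibit. Nothing further is needed.
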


\end{subsection}
\begin{subsection}{Omitting $K_h$ configurations}
Now we study the gonality of a connected metric graph $K_d'$ obtained by removing edges from $K_d$. With the expression \lq \lq omitting a $K_h$" and the notation $K_d \setminus K_h$ we mean removing the edges that form the complete graph on $h$ vertices for $h < d$. 

\begin{thm} \label{hconf}
Let $K_d' = K_d \setminus K_h$ with $d \geq 3$. The gonality of $K_d'$ is $d-h$.
\end{thm}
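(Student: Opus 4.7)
My plan is to prove the two inequalities $\textrm{gon}(K_d') \leq d-h$ and $\textrm{gon}(K_d') \geq d-h$ separately. Label the essential vertices of $K_d'$ by $v_1, \ldots, v_d$, with the removed $K_h$ spanned by $\{v_1, \ldots, v_h\}$; then in $K_d'$ the vertex $v_k$ with $k \leq h$ is adjacent exactly to $v_{h+1}, \ldots, v_d$, while each $v_j$ with $j > h$ is adjacent to all the remaining vertices.

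For the upper bound I will exhibit the divisor $D = \sum_{j=h+1}^{d}(v_j)$ of degree $d-h$ and verify $\textrm{rk}(D) \geq 1$. By Theorem \ref{rankdet} the essential vertex set is rank-determining, so it suffices to show $|D - (v_k)| \neq \emptyset$ for each $k$. The case $k > h$ is immediate since $D - (v_k)$ is itself effective. For $k \leq h$ I will write down an explicit rational function: setting $L := \min_{j > h} \ell_{k,j}$ (where $\ell_{k,j}$ is the length of the edge $v_k v_j$), let $f(v_k) = 0$, let $f$ be the constant $0$ along the initial segment of length $\ell_{k,j} - L$ on each edge $v_k v_j$, rise with slope $1$ over the remaining $L$ units to the value $L$ at $v_j$, and equal the constant $L$ on every edge not incident to $v_k$. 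This is a legitimate rational function (integer slopes in $\{-1, 0, 1\}$, finitely many pieces, continuous). A direct outgoing-slope computation yields
\[
\textrm{div}(f) = |A|\,(v_k) + \sum_{j \in B}(w_j) - D,
\]
where $A = \{j > h : \ell_{k,j} = L\}$, $B = \{j > h : \ell_{k,j} > L\}$, and $w_j$ denotes the kink point on the edge $v_k v_j$. Since $|A| \geq 1$, the relation $D \sim |A|\,(v_k) + \sum_{j \in B}(w_j)$ exhibits an effective divisor equivalent to $D$ with a chip at $v_k$, so $|D-(v_k)| \neq \emptyset$.

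For the lower bound I will apply Corollary \ref{complete}: it is enough to locate a $K_{d-h+1}$ subgraph inside $K_d'$. The subset $\{v_1, v_{h+1}, \ldots, v_d\}$ of $V(K_d')$ has $d-h+1$ elements; $v_1$ is adjacent to every $v_j$ with $j > h$ because the removed edges all lie within $\{v_1, \ldots, v_h\}$, while $v_{h+1}, \ldots, v_d$ are pairwise adjacent as in $K_d$, so this subset spans a $K_{d-h+1}$, and Corollary \ref{complete} gives $\textrm{gon}(K_d') \geq (d-h+1) - 1 = d-h$. The chief subtlety lies in the upper bound: the naive chip-firing of $V(K_d') \setminus \{v_k\}$ that would produce $D \sim (d-h)(v_k)$ in the underlying combinatorial graph corresponds, on a metric graph, to a rational function whose slopes $1/\ell_{k,j}$ on the edges at $v_k$ are not integers unless the incident edges all have the same length; the kinked function above is designed precisely to restore integrality while still yielding an effective representative of $D$ that charges $v_k$.
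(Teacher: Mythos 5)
Your proof is correct and follows essentially the same route as the paper: the same divisor $D=\sum_{j=h+1}^d (v_j)$, the same appeal to Theorem \ref{rankdet} to reduce to checking $|D-(v_k)|\neq\emptyset$ at vertices, and the same $K_{d-h+1}$ subgraph (the paper uses $\{v_i, v_{h+1},\dots,v_d\}$ for any $i\leq h$) together with Corollary \ref{complete} for the lower bound. The only difference is that where the paper says informally that ``the chips can move towards $v_1$,'' you write out the explicit piecewise-linear function with kinks compensating for unequal edge lengths — a worthwhile elaboration, but not a different argument.
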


\begin{proof} We indicate with $v_1, v_2, \cdots, v_h$ the essential vertices of $K_h$. The divisor $D = (v_{h+1}) + (v_{h+2}) + \cdots + (v_d)$ has degree $d-h$ and rank $1$. In fact, since the vertices $v_1, v_2, \dots, v_h$ are completely disconnected with respect to each other, the chips can move towards $v_1$ and form an effective divisor linearly equivalent to $D$ with chips in $v_1$. The same can be done for all the other vertices $v_2, v_3, \dots, v_h$. By Theorem \ref{rankdet} the divisor has rank at least $1$, so $K_d'$ has gonality at most $d-h$. 
The vertices $v_{h+1}, \dots, v_d$ and a vertex $v_i \in \{v_1, v_2, \dots, v_{h} \}$ form a complete graph $K_{d-h+1}$, 
so it follows from Corollary \ref{complete} that the gonality has to be at least $d-h$. 
\end{proof}
\begin{exam} \label{esempio} Let $d=8$. We remove $6$ edges forming a $K_4$ on the vertices $v_1, v_2, v_3$ and $v_4$. In the left-hand side graph of Figure \ref{FigEsem1} the removed edges are dashed. The divisor $D = (v_5) + (v_6) + (v_7) + (v_8)$ has degree $4$ and rank at least $1$, since we can move the chips along the edges connecting the vertices with $v_1$; at least one chip will reach the vertex $v_1$. The same can be done for the other vertices. Therefore the gonality of $K_8'= K_8 \setminus K_4$ is at most $4$. The vertices $v_4, v_5, v_6, v_7, v_8$ form a $K_5$ contained in the graph $K_8'$, so the gonality is actually $4$. In the right-hand side figure the edges forming the $K_5$ are dotted. 
\begin{figure}[h]
\includegraphics[width=40mm,height=40mm]{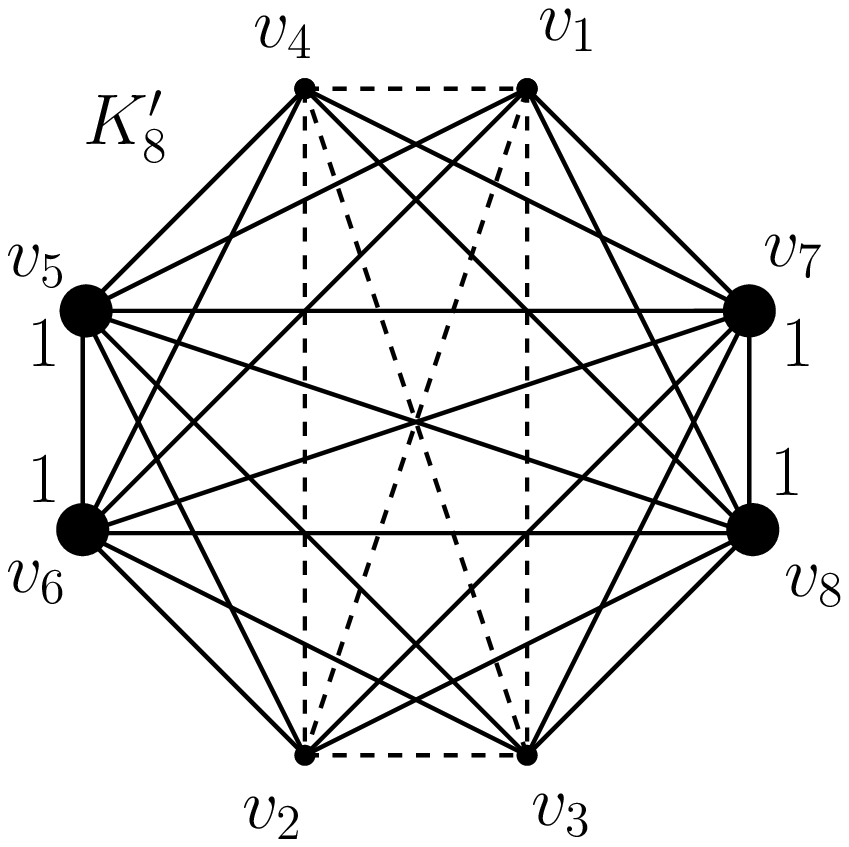}~a)
\qquad \qquad
\includegraphics[width=40mm,height=40mm]{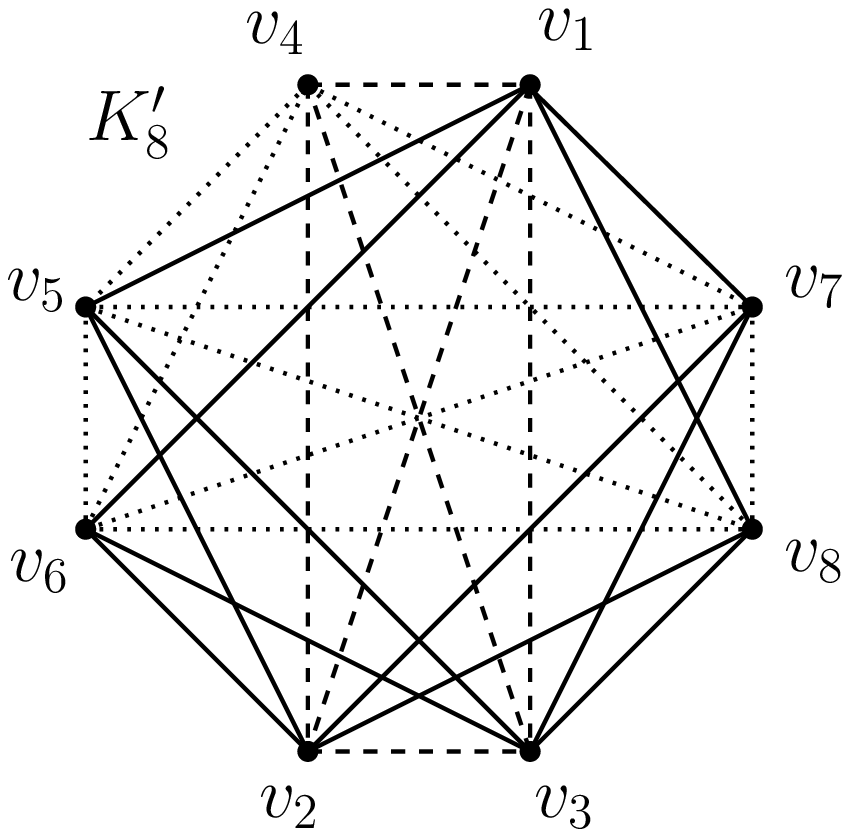}~b)
\caption{The graph $K_8'$ of Example \ref{esempio}. The omitted edges forming a $K_4$ are dashed. In  \textbf{a} it is depicted the divisor $D = (v_5) + (v_6) + (v_7) + (v_8)$ of degree $4$ and rank at least $1$.  In \textbf{b} the edges forming a $K_5$ are dotted to show that the gonality is actually $4$.}
\label{FigEsem1}
\end{figure}
\end{exam}

\begin{thm} \label{duecomplete} Let $K_d' = K_d \setminus \{K_m, K_n\}$, connected. The gonality of $K_d'$ is $d - \textrm{max}\{m,n\}$. 
\end{thm}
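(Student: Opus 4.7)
The plan is to extend the strategy of Theorem~\ref{hconf}: exhibit a degree-$(d-\max\{m,n\})$ divisor of rank at least one for the upper bound, and use the proposition on internally disjoint paths (preceding Corollary~\ref{complete}) for the lower bound. Without loss of generality I assume $m\ge n$, and I let $A$ and $B$ denote the vertex sets of the removed $K_m$ and $K_n$. If $A=B$ the removed edges are just $\binom{A}{2}$ and the statement reduces to Theorem~\ref{hconf}. Otherwise $|A|\ge|B|$ together with $A\ne B$ forces $A\setminus B\ne\emptyset$, and I fix $v^*\in A\setminus B$.

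\emph{Upper bound.} I would take $D=\sum_{v\in V\setminus A}(v)$, of degree $d-m$, and use Theorem~\ref{rankdet} to reduce checking $\mathrm{rk}(D)\ge 1$ to exhibiting, for every vertex $w$, an effective divisor equivalent to $D$ with positive coefficient at $w$. For $w\in V\setminus A$ this is immediate. For $w\in A$ the vertex $w$ is disconnected inside $K_d'$ from every other vertex of $A$, and, when $w\in A\cap B$, also from every other vertex of $B$; so, exactly as in the proof of Theorem~\ref{hconf}, the chips of $D$ on the neighbors of $w$ can be transported along the direct edges into $w$ by a suitable rational function, producing an effective divisor with coefficient $\mathrm{val}(w)$ at $w$ (plus the chips of $D$ left on $B\setminus A$ when $w\in A\cap B$). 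Connectedness of $K_d'$ forces $\mathrm{val}(w)\ge 1$, so the coefficient at $w$ is indeed positive.

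\emph{Lower bound.} I apply the proposition preceding Corollary~\ref{complete} to the set
\[
T=(V\setminus A)\cup\{v^*\},
\]
of cardinality $d-m+1$. The work is to show that every pair $u,v\in T$ is joined by $d-m$ internally disjoint paths in $K_d'$. For pairs inside $V\setminus A$, or for $v^*$ paired with a vertex of $V\setminus(A\cup B)$, the direct edge together with length-$2$ paths through $V\setminus(A\cup B)$ and $A\setminus B$ already yields well over $d-m$ paths, so these cases are easy. The delicate case is $u\in B\setminus A$ paired with $v=v^*\in A\setminus B$: there I combine three pairwise internally disjoint families, namely the direct edge $uv^*$; the $|V\setminus(A\cup B)|=d-m-n+|A\cap B|$ length-$2$ paths $u\to w\to v^*$ with $w\in V\setminus(A\cup B)$; and the $|B\setminus A|-1=n-|A\cap B|-1$ length-$3$ paths $u\to w_1\to w_2\to v^*$ with $w_1\in A\setminus B\setminus\{v^*\}$ and $w_2\in B\setminus A\setminus\{u\}$. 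These three families use intermediate vertices from pairwise disjoint subsets of $V$, so they are internally disjoint, and they sum to exactly
\[
1+(d-m-n+|A\cap B|)+(n-|A\cap B|-1)=d-m.
\]
The proposition then yields $\mathrm{gon}(K_d')\ge d-m$, matching the upper bound.

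The main obstacle is precisely this last count: as soon as $|B\setminus A|\ge 2$, the graph $K_d'$ does not contain $K_{d-m+1}$ as a subgraph, so Corollary~\ref{complete} on its own is not sharp. The missing paths must be supplied by the length-$3$ detours through $A\setminus B$ and $B\setminus A$, and the fact that there are enough midpoints $w_1\in A\setminus B\setminus\{v^*\}$ to pair with each $w_2\in B\setminus A\setminus\{u\}$ relies on the hypothesis $m\ge n$. Once this combinatorial identity is established, the proof concludes at once.
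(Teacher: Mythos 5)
Your argument is correct, but it is organized rather differently from the paper's. The paper only writes out the lower bound, and does so by re-running the reduced-divisor machinery directly: with the convention $n\ge m$ it reduces a putative degree-$(d-n-1)$ divisor at a vertex $\overline{v}$ of the \emph{smaller} clique not lying in the larger one (so $\mathrm{val}(\overline{v})=d-m\ge d-n$, which leaves slack), finds a chip-free vertex $w$ outside $K_n$, and uses Lemma~\ref{lemma} to show the divisor is simultaneously $\overline{v}$- and $w$-reduced. You instead invoke the proposition preceding Corollary~\ref{complete} as a black box, applied to $(V\setminus A)\cup\{v^*\}$ with $v^*$ in the \emph{larger} clique minus the smaller; since $\mathrm{val}(v^*)=d-\max\{m,n\}$ exactly, you have no slack and must manufacture the last $n-|A\cap B|-1$ paths as length-$3$ detours through $A\setminus B$ and $B\setminus A$. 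Your count $1+(d-m-n+|A\cap B|)+(n-|A\cap B|-1)=d-m$ is right, the three families are genuinely internally disjoint, and your observation that Corollary~\ref{complete} alone stops being sharp once $|B\setminus A|\ge 2$ is exactly the point. (Had you placed the distinguished vertex in $B\setminus A$ when that set is nonempty, its valence would be $d-\min\{m,n\}$ and length-$\le 2$ paths would suffice, essentially recovering the paper's situation; your choice is the one that always exists, at the cost of the extra combinatorics.) Two minor remarks: you prove the upper bound explicitly, which the paper leaves implicit --- good --- but with arbitrary edge lengths the chips on the neighbours of $w$ need not all reach $w$ simultaneously; only the chip on the shortest incident edge is guaranteed to arrive, so the equivalent effective divisor has coefficient at least $1$ at $w$ rather than $\mathrm{val}(w)$, which is all you need.
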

\begin{proof} Without loss of generality we assume $n \geq m$. We suppose that there is at least one vertex $\overline{v}$ of $K_m$ not in $K_n$, otherwise by Theorem \ref{hconf} we can conclude. We consider a divisor $D$ of degree $d-n-1$ and we show that it cannot 
have rank $1$. We reduce it with respect to $\overline{v}$ and suppose $D(\overline{v}) \geq 1$. Since the degree is $d-n-1$, it means that there is at least one vertex $w$ in~$K_d \setminus K_n$ without chips. We show that $D$ is also $w$-reduced. We will show that for any subset $S$ of $\textrm{supp}(D) \setminus \{w\}$, the set $\mathcal{U}^c_{S,w}$ contains a non-saturated boundary point with respect to $D$ and $\mathcal{U}^c_{S,w}$. 
There are at least $d-m$ disjoint paths inside $K_d'$ connecting $\overline{v}$ and $w$. We can have chips in at most $d-n-2$ of them, therefore if $\overline{v} \not \in S$, then $\overline{v} \in \mathcal{U}_{w,S}$ and $\mathcal{U}^c_{w, S} = \mathcal{U}^c_{\overline{v}, S}$. Since $D$ is $\overline{v}$-reduced then we can find a non-saturated element. 

If $\overline{v} \in S$, then $\overline{v}$ is the non-saturated boundary point for $\mathcal{U}^c_{w, S}$. In fact $\textrm{val}(\overline{v}) =d-m$, since $\overline{v} \in K_m$. The degree of the divisor is $d-n-1$, so at most $d-n-1-D(\overline{v})$ of these edges can contain chips in points different from $\overline{v}$. Therefore we have 
\begin{eqnarray*}
\textrm{outdeg}_{\mathcal{U}^c_{w, S}}(\overline{v}) &\geq & \textrm{val}(\overline{v}) - (d-n-1 - D(\overline{v})) \\
& \geq & d - m - d +n +1 + D(\overline{v}) > D(\overline{v}), \\
\end{eqnarray*}
and $\overline{v}$ is non-saturated. 
\end{proof}
The gonality of the complete bipartite metric graph follows immediately from Theorem \ref{duecomplete}. 
\begin{cor} \label{bipartite} Let $K_{m+n}$ be a complete metric graph with $m,n \geq 2$. If we remove disjoint $K_m$ and $K_n$ we obtain the complete bipartite graph $K_{m,n}$. 
By the previous theorem we have 
\[
\textrm{gon}(K_{m,n}) = m+n - \max\{m,n\} = \min\{m,n\}. 
\]
\end{cor}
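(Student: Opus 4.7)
The plan is to recognize this statement as an immediate specialization of Theorem~\ref{duecomplete} once the combinatorial identification of the graph is made. So the first step is purely graph-theoretic: I would partition the vertex set of $K_{m+n}$ into two disjoint subsets $A$ and $B$ of sizes $m$ and $n$, note that the edges of the $K_m$ induced on $A$ and the edges of the $K_n$ induced on $B$ are exactly the edges lying within a part, and observe that what remains after deleting them is precisely the set of edges with one endpoint in $A$ and one in $B$. This is the definition of the complete bipartite graph $K_{m,n}$.

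Next I would verify the hypotheses of Theorem~\ref{duecomplete}. The two omitted complete subgraphs $K_m$ and $K_n$ are disjoint by construction, and since $m,n \geq 2$, every vertex in $A$ has at least two neighbors in $B$ (and symmetrically), so $K_{m,n}$ is connected. We may therefore apply Theorem~\ref{duecomplete} with $d = m+n$ to conclude that
\[
\textrm{gon}(K_{m,n}) = (m+n) - \max\{m,n\}.
\]
The final step is the trivial arithmetic identity $(m+n) - \max\{m,n\} = \min\{m,n\}$, which yields the claimed formula.

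I do not anticipate any genuine obstacle here, since the content of the corollary is entirely absorbed by Theorem~\ref{duecomplete}; the only thing that requires a sentence is the identification of $K_{m+n} \setminus \{K_m, K_n\}$ with the bipartite graph $K_{m,n}$, together with the remark that the connectedness assumption of Theorem~\ref{duecomplete} is automatic once $m,n \geq 2$.
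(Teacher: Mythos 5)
Your proposal is correct and follows exactly the route the paper takes: identify $K_{m+n}\setminus\{K_m,K_n\}$ with the complete bipartite graph $K_{m,n}$ and apply Theorem~\ref{duecomplete}. The only addition is your explicit check of connectedness from $m,n\geq 2$, which the paper leaves implicit.
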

\end{subsection}

\begin{subsection}{Omitting general configurations}
Theorem \ref{hconf} can be generalized to compute the gonality of $K_d' = K_d \setminus \{e_1, e_2, \dots, e_i\}$, with no hypothesis on the configuration formed by the removed edges. The case of $i = 1, 2$ has already been considered by Sofie Burggraeve in her master's thesis, where she proved that for 
both these values the gonality is $d-2$. We will assume $d \geq 3$ and consider the case $1 \leq i \leq d-2$; 
with this upper bound the graph $K_d' =  K_d \setminus \{ e_1, e_2, \dots, e_{i} \}$ will always be connected. The upper bound on the number of removed edges will be further justified in Section \ref{sharpness}. 

\begin{thm} \label{completegen} Let $K_d' = K_d \setminus \{ e_1, e_2, \dots, e_i \}$ with $d\geq 3$ and $1\leq i \leq d-2$. Then $K_d'$ has gonality $d-h$ if and only if the edges we remove contain a $K_h$ but not a $K_{h+1}$ configuration. 
\end{thm}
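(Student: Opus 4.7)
My plan splits the proof into an upper-bound and a lower-bound half, both following the strategy of Theorems \ref{hconf} and \ref{duecomplete}.

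For the upper bound $\textrm{gon}(K_d')\le d-h$, I will fix a copy of $K_h$ inside the removed edges on vertices $v_1,\ldots,v_h$ and take the divisor $D=(v_{h+1})+\cdots+(v_d)$, effective of degree $d-h$. Since the essential vertices form a rank-determining set (Theorem \ref{rankdet}), to see $\textrm{rk}(D)\ge 1$ I only need to produce, for each vertex $v$, an effective $E_v\sim D$ with $E_v(v)\ge 1$. For $v\in\{v_{h+1},\ldots,v_d\}$ take $E_v=D$. For $v=v_i$ with $i\le h$, firing $V(K_d')\setminus\{v_i\}$ will deposit $\textrm{val}_{K_d'}(v_i)\ge 1$ chips at $v_i$ and subtract one chip from each neighbour of $v_i$ in $K_d'$; because the removed $K_h$ disconnects $v_i$ in $K_d'$ from every other $v_j$ with $j\le h$, those neighbours lie in $\{v_{h+1},\ldots,v_d\}$ where $D$ already places a chip, so the outcome is effective with a positive coefficient at $v_i$.

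For the lower bound $\textrm{gon}(K_d')\ge d-h$, I will show that any divisor $D$ of degree $d-h-1$ satisfies $\textrm{rk}(D)\le 0$ by exhibiting a vertex $w$ such that the $w$-reduced representative of $D$ has coefficient $0$ at $w$. Mimicking Theorem \ref{duecomplete}, I begin by choosing a vertex $v_1$ of the removed $K_h$ and replacing $D$ by its $v_1$-reduced representative. If $D(v_1)\le 0$ we are done by reducedness, so I assume $D$ is everywhere effective with $D(v_1)\ge 1$; then $|\textrm{supp}(D)|\le d-h-1$, and $Z:=V\setminus\textrm{supp}(D)$ has at least $h+1$ elements, so some $w\in Z$ lies outside $\{v_1,\ldots,v_h\}$. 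The key claim is then that the same $D$ is also $w$-reduced, which combined with $D(w)=0$ will yield $\textrm{rk}(D)\le 0$. I will verify the criterion of Lemma \ref{lemma}: for every $S\subseteq\textrm{supp}(D)\setminus\{w\}$, if $v_1\notin S$ then $v_1$ and $w$ belong to the same component of $K_d'\setminus S$, hence $\mathcal{U}^c_{S,w}=\mathcal{U}^c_{S,v_1}$ and the $v_1$-reducedness of $D$ transfers the non-saturated boundary point to $\mathcal{U}^c_{S,w}$; otherwise $v_1\in S$, and I will show $v_1$ itself is non-saturated via
\[
\textrm{outdeg}_{\mathcal{U}^c_{S,w}}(v_1)\ge \textrm{val}_{K_d'}(v_1)-\bigl(d-h-1-D(v_1)\bigr)>D(v_1),
\]
the bound $\textrm{val}_{K_d'}(v_1)\ge d-h$ coming from choosing $v_1$ with minimal $R$-degree among the vertices of $K_h$.

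The hard part will be the combinatorial bookkeeping that makes both sub-cases work simultaneously under the bare hypothesis that the set $R$ of removed edges contains a $K_h$ but not a $K_{h+1}$. For the case $v_1\notin S$ I need enough vertex-disjoint $v_1$--$w$ paths in $K_d'$ to prevent $|S|\le d-h-1$ from separating $v_1$ from $w$; for the case $v_1\in S$ I need every neighbour of $v_1$ outside $\mathcal{U}_{S,w}$ to carry a chip of $D$, so their number is controlled by $d-h-1-D(v_1)$. Both of these structural inputs are precisely what the no-$K_{h+1}$-in-$R$ hypothesis is tailored to provide, refining the use of $n\ge m$ in Theorem \ref{duecomplete}; the delicate point will be to keep the choices of $v_1$ and $w$ compatible when every vertex of the chosen $K_h$ has extra incident removed edges, possibly by replacing the base vertex $v_1$ with another vertex of $K_h$ before the argument is launched.
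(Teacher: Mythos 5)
Your upper bound is fine and matches the paper's. The lower bound, however, has a genuine gap, located exactly where you defer to ``combinatorial bookkeeping.'' The first problem is your key inequality $\textrm{val}_{K_d'}(v_1)\ge d-h$ for a vertex $v_1$ of the removed $K_h$: this can fail for \emph{every} vertex of \emph{every} $K_h$ contained in the removed set, so the replacement you allow for cannot save it. Take $d=8$ and remove the six edges $v_1v_2$, $v_1v_3$, $v_2v_3$, $v_1v_4$, $v_2v_5$, $v_3v_6$ (a triangle with one pendant removed edge at each of its vertices). This set contains a $K_3$ and no $K_4$, so $h=3$; but the only $K_3$ in the removed set is on $\{v_1,v_2,v_3\}$, and each of these vertices meets three removed edges, hence has valency $d-4<d-h$. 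Your displayed estimate then only yields $\textrm{outdeg}_{\mathcal{U}^c_{S,w}}(v_1)\ge \textrm{val}(v_1)-(d-h-1-D(v_1))=D(v_1)$ rather than the strict inequality, and the case $v_1\in S$ collapses. The no-$K_{h+1}$ hypothesis constrains the global pattern of removed edges, not the number of removed edges incident to any single vertex of the chosen $K_h$, so it does not supply the structural input you are counting on; the same weakness affects the case $v_1\notin S$, where the number of internally disjoint $v_1$--$w$ paths is likewise bounded by $\textrm{val}(v_1)$.

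The paper's proof rests on two ingredients absent from your plan. First, the base point of the reduction is not a vertex of the removed $K_h$ but a vertex $\overline{v}$ with $\textrm{val}(\overline{v})\ge d-2$, whose existence follows from an edge count using $i\le d-2$. Second, after reducing at $\overline{v}$ one finds $h+1$ vertices free of chips (together with the edges among them and the edges to $\overline{v}$); the no-$K_{h+1}$ hypothesis is used only to guarantee that two of them, $v_1$ and $v_2$, are adjacent, and one then shows that $D$ is $v_1$-reduced. The heart of that verification is a counting argument: a closed connected set $X\ni\overline{v}$ with only saturated boundary points would force strictly more than $d-2$ edges to have been removed. This is the analysis with the quantities $X_{v,A}$ and $l_k$ and the inequalities (\ref{calc1})--(\ref{calc3}), whose minima over $j$ must be checked case by case. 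None of this is sketched in your proposal, and it cannot be recovered from the template of Theorem \ref{duecomplete}, which relies on the rigid structure of $K_d\setminus\{K_m,K_n\}$ (exact valencies and many disjoint paths). You would need to either import the paper's edge-counting argument or replace it with something genuinely new.
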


\begin{proof} We set $i  = d-2$, but all the arguments work also for $1 \leq i \leq d-2$. First of all we notice that not every vertex of $K_d'$ can have valency smaller or equal than $d-3$. In fact $K_d'$ has 
\[
\frac{d(d-1)}{2} - d + 2 = \frac{d(d-3)}{2} + 2 
\]
edges, while if all the vertices had valency smaller or equal than $d-3$, the graph would have at most $\frac{d(d-3)}{2}$ edges. So there must be at least one vertex with valency at least $d-2$.  

\vspace{\baselineskip}
If $\frac{h(h-1)}{2}$ edges of the omitted ones form a $K_h$ with $h  < d$, we can argue as we did in the proof of Theorem \ref{hconf} to show that gon$(K_d') \leq d-h$. The same cannot be done for the opposite inequality: in general, after removing more then $\frac{h(h-1)}{2}$ edges, it might not be possible to find a complete $K_{d-h+1}$ as a subgraph. Therefore the argument in the second part of that proof cannot be applied. 

To prove our result we consider a divisor $D$ of degree $d-h-1$ and we show that it cannot have rank $1$. Let $\overline{v}$ be a vertex with valency bigger or equal than $d-2$. In the situation when val$(\overline{v}) = d-2$, we indicate with $w$ the vertex not connected with $\overline{v}$. We reduce $D$ with respect to $\overline{v}$ and we use the notation $D$ also for the reduced divisor. If $D(\overline{v}) \leq 0$, then the rank is not~$1$, so we suppose $D(\overline{v}) \geq 1$. It is important to remark that, since the divisor is reduced, at most one chip can lie in the interior of every edge. The degree is $d-h-1$, so at least $h+1$ vertices $\{v_1, v_2, \dots, v_{h+1} \}$, the edges between them and the edges connecting them with $\overline{v}$ will be without chips. Moreover, since we are not omitting a $K_{h+1}$ at least two of them will be connected by an edge: without loss of generality we suppose that these vertices are $v_1$ and~$v_2$. We show that the divisor is reduced also with respect to $v_1$, so the rank cannot be $1$. 

\vspace{\baselineskip}
By definition of reduced divisor, it is necessary to show that each closed connected subset $X \subseteq K_d' \setminus \{ v_1 \}$ contains a 
non-saturated boundary point, that is a point $p \in \partial X$ such that $D(p) < \textrm{outdeg}_X(p)$. If $X \subseteq K_d' \setminus \{v_1, \overline{v} \}$, then there is such $p$ because the divisor is reduced respect to $\overline{v}$. 
So assume $\overline{v} \in X$. Boundary points $p$ such that $p \not \in \textrm{supp}(D)$ are non-saturated. So, assume that all the boundary points of $X$ are points of the support of $D$. A boundary point lying in the interior of an edge is saturated. If $v_2 \in \partial X$ then $v_2$ is non-saturated because $\textrm{outdeg}_X(v_2) \geq 1 > D(v_2) = 0$. 

\vspace{\baselineskip}
These preliminary remarks lead us to consider $X \subseteq K_d'  \setminus \{v_1 \}$ such that $v_2 \not \in \partial X$, $\overline{v} \in X$, with as boundary points only vertices or interior points of edges lying in the support of $D$. This in particular implies that $v_2 \not \in X$. Moreover $\overline{v} \in \partial X$, since the edges connecting it with $v_1$ and $v_2$ do not contain chips. 

We denote with  $A$ the set $X \cap V(K'_d)$ and with $j$ the number of its element, so $j= |A|$, with $1 \leq j \leq d-2$. Suppose that we cannot find a non-saturated element in $A \cap \partial X$ and $\textrm{outdeg}_X(v) = 0$ for $v \in A$, $v \not \in \partial X$. It follows that also every element of $\partial X$ is saturated. We will deduce from these assumptions which edges are necessary to be removed from $K_d$ to obtain a suitable $K_d'$ and we will show that they are more than~$d-2$.

For every vertex $v$ we define
\[
X_{v,A} = \{p \in \partial X \ \textrm{such that} \ p \in \textrm{int}(vu), \ \textrm{with} \ u \in V(K_d') \setminus A\}.
\]
By the hypothesis on $X$, the set $X_{v,A}$ is contained in $\textrm{supp}(D)$.

All the elements in $A$ are saturated, thus $D(\overline{v}) \geq \textrm{outdeg}_X(\overline{v})$. 
The following inequality holds: 
\[\textrm{outdeg}_X(\overline{v}) + |X_{\overline{v},A}| \geq 
\begin{cases} d-j &  \textrm{if val}(\overline{v}) = d-1 \\ 
&  \textrm{or val}(\overline{v}) = d-2 \ \textrm{and} \ w \in A \\ 
d- j -1 &  \textrm{if val}(\overline{v}) = d-2 \ \textrm{and} \ w \not \in A.
\end{cases}
\]
  This inequality is an equality when all the edges connecting vertices contained in $X$ are also in $X$. If this condition does not hold, then the number of edges removed is actually bigger: for a vertex $v \in \partial X$, such that $vu$ is not in $X$, with $u \in \partial X$, in order to make it saturated, it is necessary to omit one edge more respect to the previous count. 

From the inequality above it follows that
\begin{numcases}{D(\overline{v}) + |X_{\overline{v},A}| \geq} \label{caso1}
d-j &  if $\textrm{val}(\overline{v}) = d-1$ \\ \nonumber \label{caso2}
& or $\textrm{val}(\overline{v}) = d-2 \ \textrm{and} \ w \in A$ \\
d- j -1 & if $ w \not \in A$.
\end{numcases} 

The degree of $D$ is $d-h-1$, so in the hypothesis characterizing the equation (\ref{caso1}), $j$ needs to satisfy the inequality $j \geq h+1$, otherwise $\overline{v}$ is non-saturated. Similarly, $j \geq h$ in the hypothesis of (\ref{caso2}). Hence, in the first case we can have at most $d-h-1-d+j = j -h-1$ chips in $X \setminus  (\{\overline{v}\} \cup X_{\overline{v},A})$ and $j-h$ in the second one. 

We focus on the first one. Since no elements in $A$ are non-saturated, it means that all the vertices such that $D(v) + |X_{v,A}|= 0$ are not boundary points, so that the edges connecting them with the vertices not in $A$ are omitted. Therefore, for every such vertex, $d-j$ edges are removed since~$d-j$ elements are not in $A$. Moreover also for the vertices $v$ such that $D(v) + |X_{v,A}| = 1$ at least $d- j -1$ edges must be removed since we need to have $D(v) \geq \textrm{outdeg}_X(v)$. In general for a vertex $v$ such that $D(v) + |X_{v,A}| = k$ with $0 \leq k \leq j-h-1$ at least $\max \{0,d-j-k\}$ edges have to be removed for having $D(v) \geq \textrm{outdeg}_X(v)$. The edges omitted are $\max \{0,d-j-k\}$ of the ones connecting $v$ with the vertices not in $A$ and that do not contain points of $X_{v,A}$. Let $l_k$ be the number of vertices $v$ in $A \setminus \{\overline{v}\}$ with $D(v) + |X_{v,A}| = k$. We have 
\[
\sum_{k=0}^{j-h-1} l_k = j-1, \ \ \ \ \textrm{and}
\]
\[
 \sum_{k=0}^{j-h-1} k l_k \leq d - h-1 - D(\overline{v}) - |X_{\overline{v},A}| \leq d- h -1 - (d-j) = j-h-1. 
\]
The number of edges removed if val$(\overline{v}) = d-1$ is at least
\begin{align}
\sum_{k=0}^{j-h-1} l_k \max \{0,d-j-k\}  & \geq \sum_{k=0}^{j-h-1} l_k (d-j-k) \nonumber \\
& = (d-j) \sum_{k=0}^{j-h-1} l_k  - \sum_{k=0}^{j-h-1} l_k k  \nonumber\\
& \geq  (j-1)(d-j) - (j-h-1). \label{calc1}
\end{align}
If val$(\overline{v}) = d-2$ and $w \in A$ we need to count also the omitted edge $\overline{v}w$, therefore the number of edges removed is at least 
\begin{equation}
\sum_{k=0}^{j-h-1} l_k \max \{0,d-j-k\} +1 \geq (j-1)(d-j) - (j-h-1) +1. \label{calc2}
\end{equation}
Remember that $h+1 \leq j \leq d-2$. It is enough to study the two local minima for (\ref{calc1}): they are achieved for $j =h+1$ and $j = d-2$ and the value is equal to respectively $h(d-h-1)$ and $d+h-3$. This values are always strictly bigger than $d-2$. Indeed, $h(d-h-1) - (d-2) = (h-1)(d-h-2) \leq 0$ if and only if $d \leq h+2$, but this is not possible. In fact if  $d=4$ and $h=2$, then $j$ needs to satisfy $3 \leq j \leq 2$ and if $d=5$ and $h=3$, $j$ needs to satisfy $4 \leq j \leq 3$. If $d \geq 5$ the number of edges of $K_h=K_{d-2}$ is bigger than $d-2$. So if $j = h+1$ the number of edges omitted is strictly bigger than $d-2$. Moreover $d + h - 3 - (d-2) = h -1 > 0$, since $h \geq 2$. So also for $j = d-2$ the number of omitted edges is strictly bigger than $d-2$. 

\vspace{\baselineskip}
A similar computation works when val$(\overline{v}) = d-2$ and $w \not \in A$ with $h \leq j \leq d-2$. Again, let $l_k$ be the number 
of vertices $v$ of $A \setminus \{\overline{v}\}$ such that $D(v) + |X_{v,A}| = k$ with $0 \leq k \leq j-h$. We have 
\[
\sum_{k=0}^{j-h} l_k = j-1, \ \ \ \ \textrm{and}
\]
\[ \sum_{k=0}^{j-h} k l_k \leq d - h -1 - D(\overline{v}) - |X_{v,A}| \leq d-h-1 - (d-j -1) = j-h. 
\]
So, remembering the omitted edge $w\overline{v}$, since $\textrm{val}(\overline{v}) = d-2$, the number of edges removed is 
\begin{align} 
\sum_{k=0}^{j-h-1} l_k \max \{0,d-j-k\} +1 & \geq \sum_{k=0}^{j-h} l_k (d-j -k) +1 \nonumber \\ 
&= \sum_{k=0}^{j-h} l_k (d-j) - \sum_{k=0}^{j-h} l_k k  +1 \nonumber \\
& \geq  (j-1)(d-j) - (j-h-1). \label{calc3}
\end{align}
The minima are $(h-1)(d-h) +1$ and $d+h-3$. These two values are strictly bigger than $d-2$ because $(h-1)(d-h) + 1 - (d-2) = (h-2)(d-h-1)+1$ 
and $d+h-3 - (d-2) = h -1$. \\
\end{proof}
\end{subsection}

\begin{subsection}{Sharpness} \label{sharpness} For $h=2$ and $d\geq 4$ the bound $i \leq d-2$ in the previous result is sharp. 

\begin{thm} \label{stable} It is possible to obtain a connected metric graph $K_d'$ of gonality $d-3$ by omitting $d-1$ edges not containing a $K_3$ configuration from $K_d$  
\end{thm}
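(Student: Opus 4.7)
The strategy is to exhibit one explicit example. Label the vertices of $K_d$ by $v_1, v_2, \ldots, v_d$ and form $K_d'$ by removing from $K_d$ the $d-2$ edges $v_1 v_2, v_1 v_3, \ldots, v_1 v_{d-1}$ together with the extra edge $v_2 v_d$, for a total of $d-1$ deletions. First I would verify the two combinatorial requirements: the vertex $v_1$ is still joined to $v_d$ so the graph is connected, and no three of the deleted edges form a triangle. Indeed, any triangle inside the deletion set would have to use either the edge $v_1 v_d$, an edge $v_i v_j$ with $\{i,j\} \subseteq \{2,\ldots,d-1\}$, or an edge $v_d v_i$ with $i \in \{3,\ldots,d-1\}$, and none of these is deleted.

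For the upper bound $\textrm{gon}(K_d') \le d-3$, I would take the divisor $D = (v_3) + (v_4) + \cdots + (v_{d-1})$ of degree $d-3$ and show $\textrm{rk}(D) \ge 1$ by checking, via Theorem~\ref{rankdet}, that $D-(u)$ is equivalent to an effective divisor for each essential vertex $u$. For $u = v_i$ with $3 \le i \le d-1$ this is immediate. For $u \in \{v_2, v_d\}$ I would observe that the subgraph obtained by deleting $v_1$ and the leaf edge $v_1 v_d$ is exactly $K_{d-1}$ on $\{v_2,\ldots,v_d\}$ with the single edge $v_2 v_d$ removed, in which $D$ has rank $1$ by Theorem~\ref{hconf}; any rational function witnessing $D-(u) \sim $ effective on this subgraph then extends to $K_d'$ by a constant on $v_1 v_d$, producing the same principal divisor away from $v_1$ and zero at $v_1$. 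For $u = v_1$, I would invoke the leaf identity $(v_1) \sim (v_d)$, realized by the rational function that is constant off $v_1 v_d$ and has slope $1$ along it; this reduces the case to $u = v_d$.

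For the lower bound $\textrm{gon}(K_d') \ge d-3$ I would exhibit a copy of $K_{d-2}$ inside $K_d'$: the vertex set $\{v_3, v_4, \ldots, v_d\}$ has size $d-2$ and spans a complete subgraph, because none of the removed edges lies inside this set. Corollary~\ref{complete} then yields $\textrm{gon}(K_d') \ge (d-2)-1 = d-3$.

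The main delicate point is the case $u = v_1$ in the upper bound: handling the leaf vertex needs both the metric-graph identity $(v_1) \sim (v_d)$ and the compatibility of the extend-by-constant construction for the rational function across the leaf edge. Both are routine in tropical geometry, but this is the only place where the metric (as opposed to the purely combinatorial) structure of $K_d'$ enters the argument, so it should be written out carefully.
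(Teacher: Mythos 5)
Your proof is correct, and it reaches the conclusion by a much more economical route than the paper does. Since the statement is purely an existence claim, a single verified example suffices, and yours works: the $d-1$ deleted edges form a tree (so certainly contain no $K_3$), the vertices $\{v_3,\dots,v_d\}$ span a $K_{d-2}$ giving the lower bound via Corollary~\ref{complete}, and the divisor $(v_3)+\cdots+(v_{d-1})$ has rank $1$ because it already has rank $1$ on $K_{d-1}\setminus K_2$ by Theorem~\ref{hconf}, and the leaf edge is handled by extending rational functions by a constant together with the identity $(v_1)\sim(v_d)$. All three ingredients are independent of the edge lengths, so the argument is valid for the metric graph with arbitrary lengths, as required. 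The paper takes a very different route: it revisits the counting argument in the proof of Theorem~\ref{completegen} for $h=2$, determines exactly when the inequalities there can be satisfied with only $d-1$ removed edges (forcing the removed edges to form two rooted trees sharing a single leaf), and then checks by burning arguments which of the resulting candidate divisors genuinely have rank $1$. Your example is precisely the degenerate member of that family --- the paper's third boxed case $j=2$, where it observes that $K_d'=(K_{d-1}\setminus\{\textrm{one edge}\})\cup\{\textrm{one leaf}\}$ and concludes essentially as you do. What the paper's longer analysis buys is a classification of all admissible $(d-1)$-edge configurations of gonality $d-3$ together with their rank-$1$ divisors, which is the structure later used in Section~\ref{hm} to build and lift the degree-$(d-3)$ harmonic morphism; your argument proves Theorem~\ref{stable} as stated but not that classification. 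The one step you should indeed write out, as you note, is the extension across the leaf edge, which is routine.
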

\begin{proof} We briefly recall the proof of Theorem \ref{completegen} for $h = 2$: given a $\overline{v}$-reduced divisor of degree $d-3$, with val$(\overline{v}) \geq d-2$, there are at least three vertices, the edges between them and the edges connecting them with $\overline{v}$ without chips and two of these vertices $v_1$ and $v_2$ are adjacent. Let $X \subseteq K_{d}' \setminus \{v_1\}$ be a closed connected subset, such that if two vertices are in $X$ then also the edge connecting them is in $X$. We indicate with $A$ the set $X \cap V(K_d')$ and with $j$ the number of its element. The set~$X$ contains only saturated elements (i.e.\ $D$ is not $v_1$-reduced) if and only if there are no boundary points $p$ such that $p \not \in \textrm{supp}(D)$,  $\overline{v} \in \partial X$, $v_2 \not \in \partial X$, and for every vertex $v$ in $A$ such that $D(v) + |X_{v,A}| = k$, we remove at least $\max\{0,d-j-k\}$ edges. The graph $K_d'$ is obtained from $K_d$ by removing at least 
\[
\begin{cases} \sum_{k=0}^{j-3} l_k \max \{0,d-j-k\} \geq (j-1)(d-j) - (j-3) \ \textrm{edges} \\
\textrm{if val}(\overline{v}) = d-1 \ \textrm{and} \ j \geq 3 \\
\sum_{k=0}^{j-3} l_k \max \{0,d-j-k\} + 1 \geq (j-1)(d-j) - (j-3) +1 \ \textrm{edges}\\
\textrm{if val}(\overline{v}) = d-2 \ \textrm{and} \ w \in A \ \textrm{and} \ j \geq 3 \\
\sum_{k=0}^{j-2} l_k \max \{0,d-j-k\} +1 \geq (j-1)(d-j) - (j-3) \ \textrm{edges} \\
\textrm{if val}(\overline{v}) = d-2, w \not \in A \ \textrm{and} \ j \geq 2.
\end{cases}
\]
where as in the proof of Theorem \ref{completegen}, we indicate with $l_k$ the number of vertices $v$ in $A$ with $D(v) + |X_{v,A}| = k$ chips. These are the inequalities (\ref{calc1}), (\ref{calc2}) and (\ref{calc3}) for $h = 2$ in the proof of Theorem~\ref{completegen} and they were proven to be strictly bigger than $d-2$. 

\vspace{\baselineskip}
We will organize the proof in three steps: 
\begin{itemize} 
\item We will explain when the number of edges removed is equal to the above right-hand sides. In particular this implies that the inequalities in (\ref{calc1}), (\ref{calc2}) and (\ref{calc3}) for $h=2$ are equalities. 
\item Then we will compute when the right-hand side numbers are equal to $d-1$. These considerations will provide us with a description of the divisor $D$. 
\item We will check if $D$ has rank $1$. 
\end{itemize}

We begin with the first step. As pointed out in the previous proof, without the hypothesis that if two vertices are in $X$, then also the edge connecting them must be contained, the numbers of edges removed are bigger than the left-hand side numbers on the inequalities above. Moreover the number of removed edges is equal to the numbers on the left-hand side if and only if for every vertex $v$ in $A \setminus \{\overline{v}\}$ such that $D(v) + |X_{v,A}| = k$ we remove exactly $\max\{0,d-j-k\}$ edges. Then if the first inequalities in (\ref{calc1}), (\ref{calc2}) and (\ref{calc3}) are equalities, for all $v \in A$, we need $k = D(v) + |X_{v,A}| \leq d-j$. Finally the last inequalities in (\ref{calc1}), (\ref{calc2}) and (\ref{calc3}) are equalities if and only if $D(\overline{v}) + |X_{\overline{v},A}| =d-j$ for (\ref{calc1}) and (\ref{calc2}) or $D(\overline{v}) + |X_{\overline{v},A}| = d-j-1$ for (\ref{calc3}) and $A$ is such that all the chips lay in the closed edges connecting vertices in $A$ with vertices not in $A$. The last conditions ensure us that in our computation all the chips are contributing and 
\[
\sum_{k=0}^{j-2} kl_k = d-2-1-D(\overline{v}) - |X_{v,A}|.
\]

The following step is computing the integer solutions of the equations 
\[
(j-1)(d-j) - (j-3) + 1 = d-1 \ \ \ \ \ \ (j-1)(d-j) - (j-3) = d-1.
\]
The first equation does not have integer solutions except for $d=4$, but by removing $3$ edges from a $K_4$ we obtain a tree, and indeed it has gonality one. 

The second one has solutions $j_1 = 2$ and $j_2 = d-2$. Therefore, there are three different situations in which it is possible to find a subset $X \subseteq V(K_d') \setminus \{v_1, v_2\}$ with only saturated points: 

\vspace{\baselineskip}
\fbox{val$(\overline{v}) = d-1$, $j= d-2$ and $D(\overline{v}) + |X_{\overline{v},A}|=2$}  
\medskip 

Therefore $A = V(K_d') \setminus \{v_1, v_2\}$ and $D(\overline{v}) = 2$. The number of removed edges is given by 
\[
(d-d+2-0) \cdot l_0 + (d-d+2-1) \cdot l_1=2l_0 +l_1.
\]
Since $D(\overline{v}) = 2$ and $j = d-2$, then $l_0 \geq 2$. 

The divisors do not have coefficients strictly bigger than $ d - j = 2$, according to the previous step. Therefore the divisors are of the form 
\begin{eqnarray*} 
D = 2(\overline{v}) &+& 0((v_1) + \cdots +(v_s)) + (E_{s+1} + \cdots + E_t)  \\
&+& ((f_{t+1}) + \cdots + (f_{d-1})).
\end{eqnarray*}
In this expression, $E_k$ is an effective divisor of degree $2$ with support in the edges $v_kv_1$ and $v_kv_2$, while $(f_k)$ is an effective divisor of degree $1$ with support in the edges $v_kv_1$ and $v_kv_2$. Moreover $s \geq 4$ and $2+2(t-s) + (d-t-1) = d-3$, so $t-2s+4=0$. 

\vspace{\baselineskip}
We study the rank of the divisors described above, in particular we prove that they do not have rank $1$. Since $s\geq 4$ there are at least two vertices in~$A \setminus \{\overline{v}\}$ such that $D(v) + |X_{v,A}| = 0$. We show that the divisors are reduced with respect to these vertices using a burning argument and firing from one of them. These vertices are connected between each other and connected to the other vertices in $A$, therefore two fires will approach every vertex in the support of the divisor. If there are vertices with coefficient $1$, they will burn and then all the vertices with coefficient $2$ will do the same, since the fire passing from the burnt vertices will approach them too. If there are no vertices with coefficient $1$, then $l_0 >2$, so more than two fires will approach every vertex. In both situations the whole graph will burn. If instead a point $p$ of the support is not a vertex, then three fires will approach $\overline{v}$, which therefore will burn and then the whole graph will do the same. 

\vspace{\baselineskip}
\fbox{val$(\overline{v}) = d-2$, $j = d-2$, $w \in \{v_1, v_2\}$ and $D(\overline{v}) =1$} 
\medskip 

As before the number of edges that we need to remove is given by 
\[
1 + (d-d+2-0)\cdot l_0 + (d-d+2-1)\cdot l_1 = 1 + 2 l_0 + l_1.
\]
Again it is equal to $d-1$ if and only if the divisor has no vertices such that $D(v) + |X_{v,A}|$ is strictly bigger than $d - j = 2$. The divisors therefore are of the form 
\begin{eqnarray*}
D = (\overline{v}) &+& 0((v_1) + \cdots +(v_s)) +  (E_{s+1} + \cdots + E_t) \\
&+& ((f_{t+1}) + \cdots + (f_{d-1})),
\end{eqnarray*}
where $E_k$ is an effective divisor of degree $2$ with support in the edges $v_kv_1$ and $v_kv_2$, while $(f_k)$ is an effective divisor of degree $1$ with support in the edges $v_kv_1$ and $v_kv_2$. Moreover $s\geq 3$ and $2(t-s) + (d-t-1) + 1 = d-3$, so $t-2s +3=0$. 

\vspace{\baselineskip}
We claim that the only one of such divisors that has rank $1$ is 
\[
D = (\overline{v}) + (v_4) + \cdots + (v_{d-1}).
\]
If a divisor has an $E_i$ then there are two vertices without chips in $A$ that are connected and we can conclude using a burning argument as in the previous item. If there is a $(f_i)$ with the chip not in $v_i$, then we can find an equivalent effective divisor $D'$ such that $v_1, v_2 \in \textrm{supp}(D')$. Anyway there is no effective divisor $D''$ equivalent to $D$ such that $v_i \in \textrm{supp}(D)$: again by firing from~$v_i$ two fires will approach $\overline{v}$, one of which passing through $v_3$, and then the whole graph will burn. 

Instead, the divisor $D$ above is such that we can move the chips towards~$v_3$ and get an equivalent divisor with chips in it and the same for $v_1$ and $v_2$. 

\vspace{\baselineskip}
We illustrate this with an example. Let $K_8'$ be the graph in Figure \ref{FigSharp1}, with every edge with the same length. The divisor $D = 5(v_3) \sim (v_7) + (v_6) + (v_5) + (v_4) + (\overline{v}) \sim  3(v_1) + 2(v_2)$ has rank $1$. 

\begin{figure}[h]
\includegraphics[width=4cm,height=4cm]{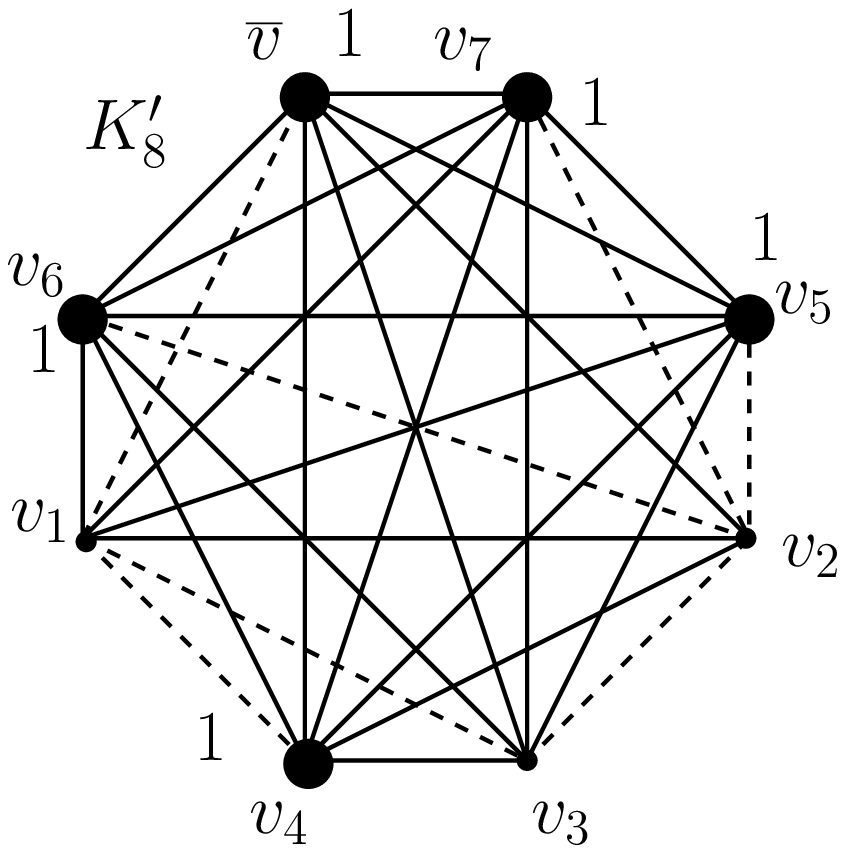}~a)
\qquad \qquad
\includegraphics[width=4cm,height=4cm]{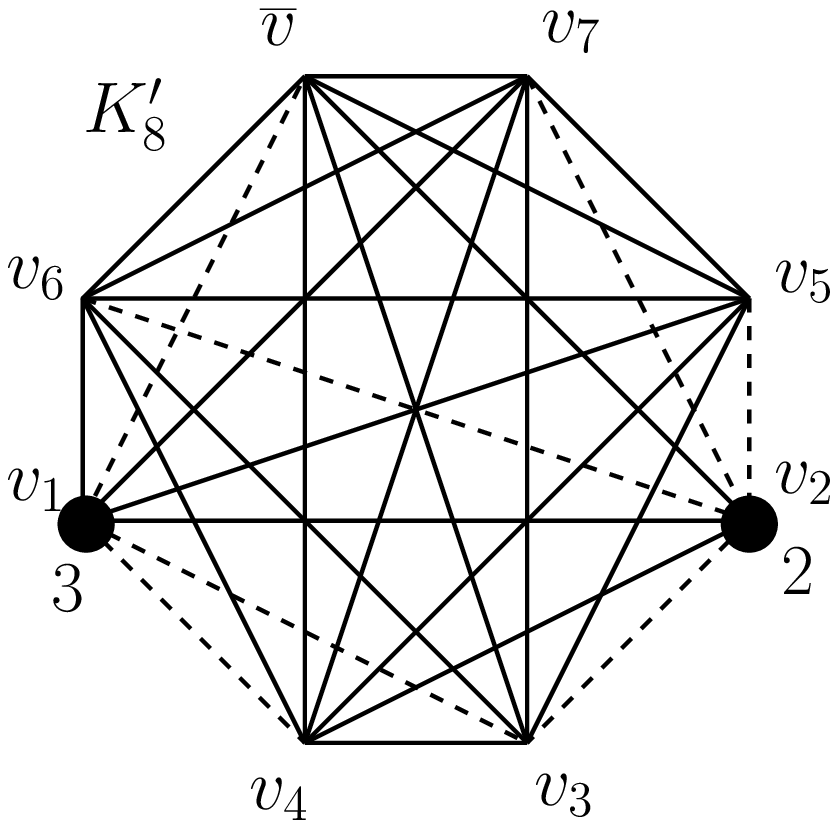}~b)
\caption{The graph $K_8'$. The omitted edges are dashed. In \textbf{a} it is depicted the divisor $(v_7) + (v_6) + (v_5) + (v_4) + (\overline{v})$ and in \textbf{b} the divisor  $3(v_1) + 2(v_2)$. They are linearly equivalent to $5(v_3)$.  }
\label{FigSharp1}
\end{figure}

\vspace{2cm}

\fbox{val$(\overline{v}) = d-2$, $j = 2$ and $A = \{\overline{v}, v\}$, with $v \not = w$} 
\medskip 

Then the divisor has to be of degree $d-3$ with support in the edges $\overline{v}u$ with $u \in K(V_d') \setminus \{v, w\}$ and at least one chip in $\overline{v}$. The number of edges that we need to remove is 
\[
1+(d-2)l_0 = 1+d-2 = d-1.
\]

The rank of the divisor depends on the graph. For example, if all the edges have the same length, we can consider the divisor $D= (d-3)(\overline{v})$. It has rank $1$, in fact 
\[
D = (d-3)(\overline{v}) \sim (d-3) (v) \sim (v_4) + \cdots + (v_{d-1}) \sim (d-3)(w). 
\]
With arbitrary lengths of the edges this does not hold. 

\vspace{\baselineskip}
Again we illustrate it with an example, see Figure \ref{FigSharp2}.

\begin{figure}[h]
\includegraphics[width=3.5cm,height=3.5cm]{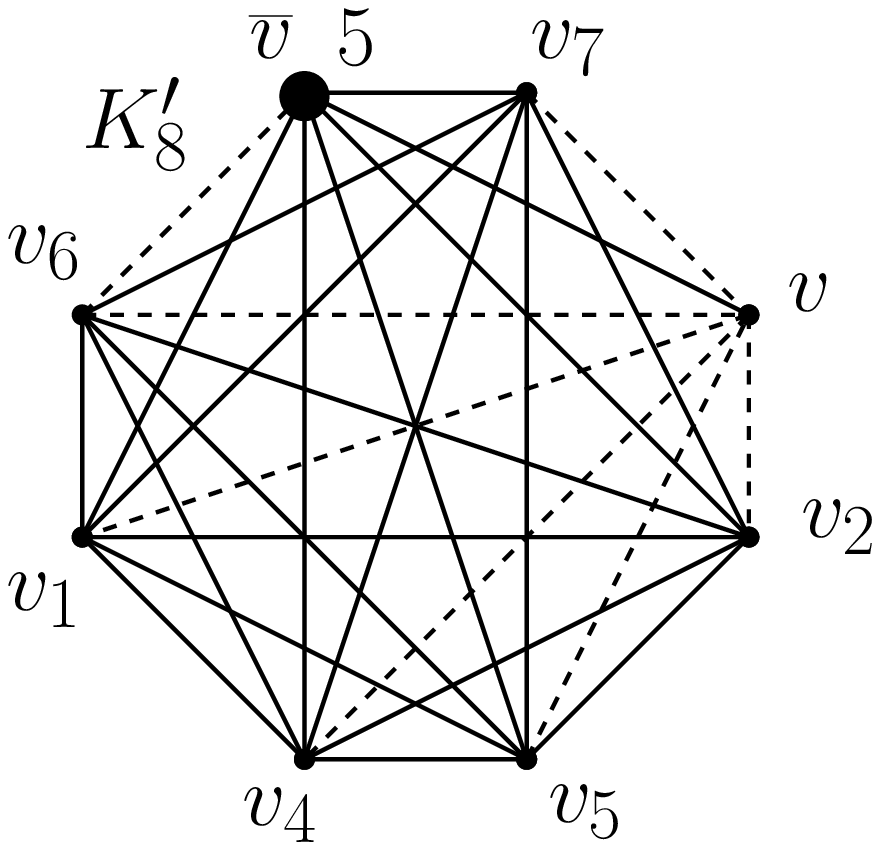}~a)
\hfill
\includegraphics[width=3.5cm,height=3.5cm]{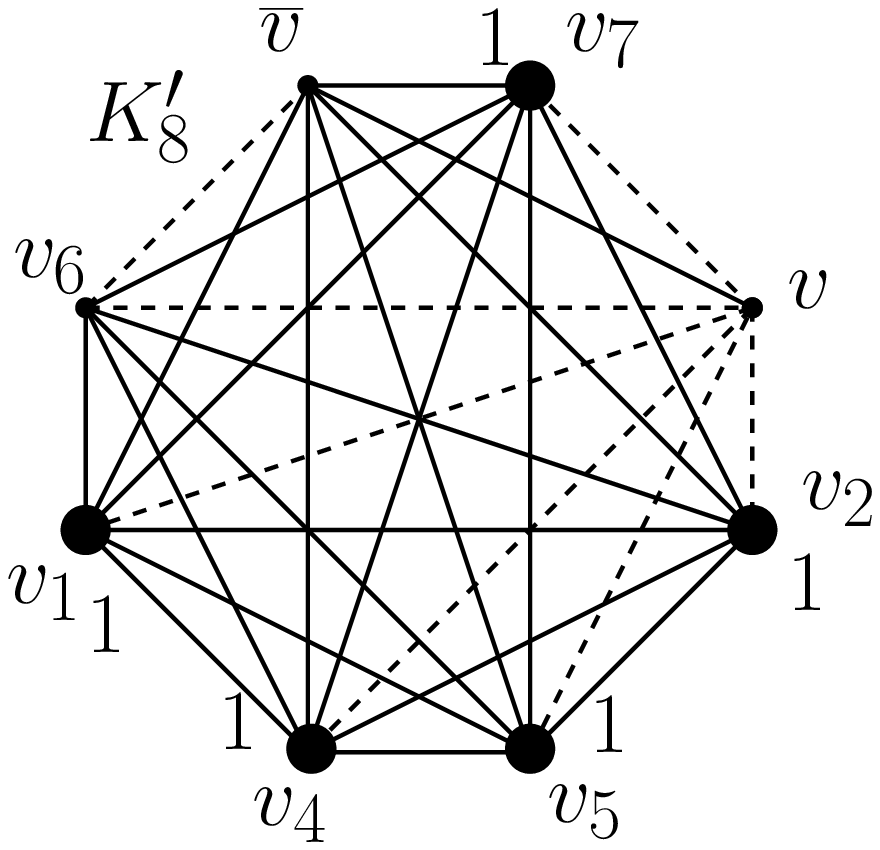}~b)
\hfill
\includegraphics[width=3.5cm,height=3.5cm]{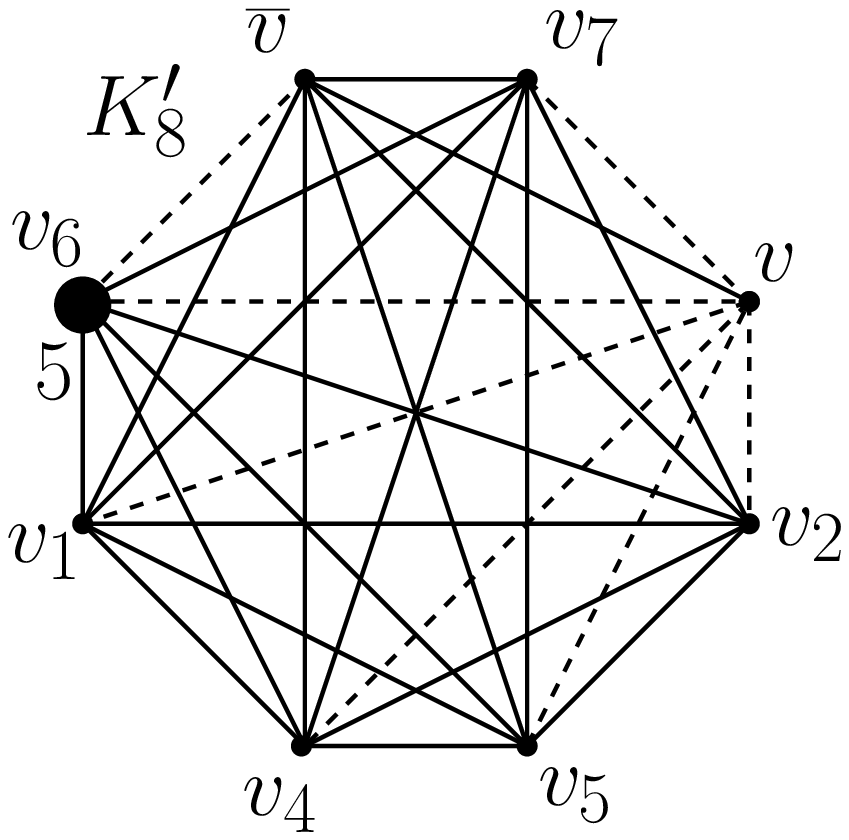}~c)
\caption{The graph $K_8'$.The omitted edges are dashed. In \textbf{a} it is depicted the divisor $D = 5(\overline{v})$. In \textbf{b} and \textbf{c} are respectively depicted the divisors $(v_1) + (v_2) + (v_4)+ (v_5) + (v_7)$ and $5(v)$ that are linearly equivalent to $D$. }
\label{FigSharp2}
\end{figure}
The divisor $D = 5(\overline{v})$ is such that 
\[
D \sim (v_1) + (v_2) + (v_4)+ (v_5) + (v_7) \sim 5(v).
\]

It is important to remark that we can conclude anyway that this graph has gonality $d-3$. In fact by construction the graph is 
\[
K_{d}' = (K_{d-1} \setminus \{\textrm{one edge}\}) \cup \{\textrm{one leaf}\}.
\]
Therefore it has the gonality of $K_{d-1} \setminus \{\textrm{one edge}\}$ by Theorem \ref{completegen}, that is $d-1 - 2 = d-3$. This can be explained by noticing that the same graph can be obtained also from the previous case by removing specific configurations of edges. 

\end{proof}

The graphs constructed in the proof are obtained by taking two vertices~$v_1$ and $v_2$ and by disconnecting each of the other vertices with one of them, except for one vertex disconnected with both of them. The edges removed form a graph that consists of two trees, $T_1$ and $T_2$, where $T_i$ is a rooted tree with root $v_i$ and $k_i$ leaves, such that $k_1 + k_2 = d-1$. Only one of the leaf vertices is a vertex of both the trees and therefore not connected with $v_1$ and $v_2$.  Two possible configurations are depicted in Figure \ref{Threes}.
The valencies of $v_1$ and $v_2$ are 
\[
\textrm{val}(v_1) = d-1-k_1 \ \ \textrm{and} \ \ \textrm{val}(v_2) = d-1-k_2.
\]
\begin{rem}
The vertices $v_3, v_4, \dots, v_d$ form a $K_{d-2}$.
\end{rem}

\begin{figure}[h]
\includegraphics[width=4cm,height=3.5cm]{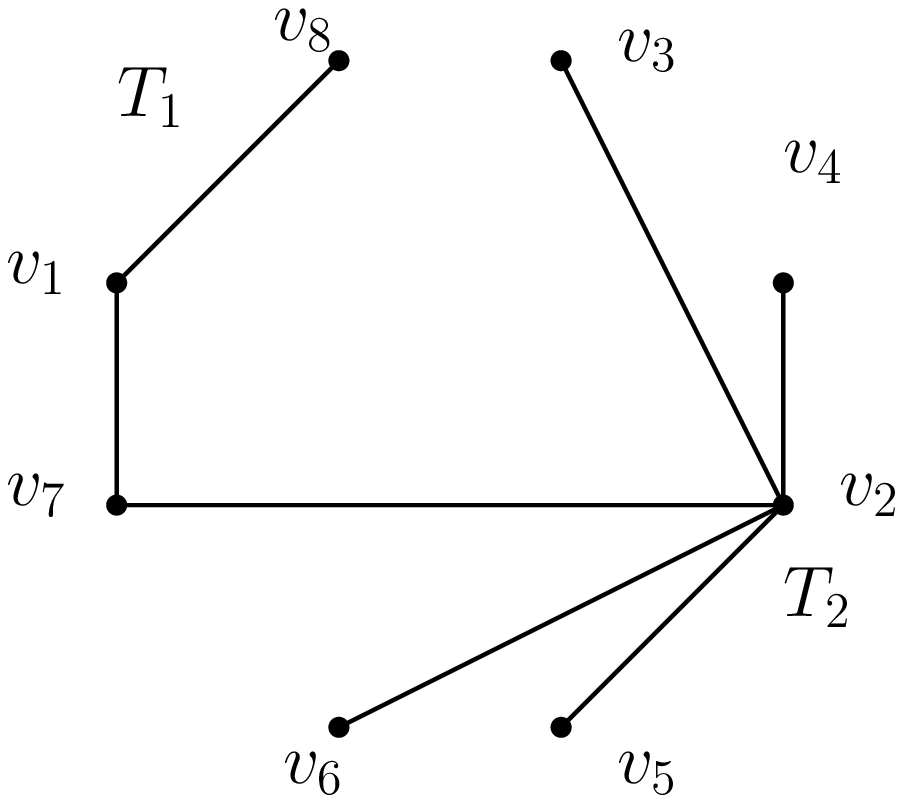}~a)
\qquad
\includegraphics[width=4cm,height=4cm]{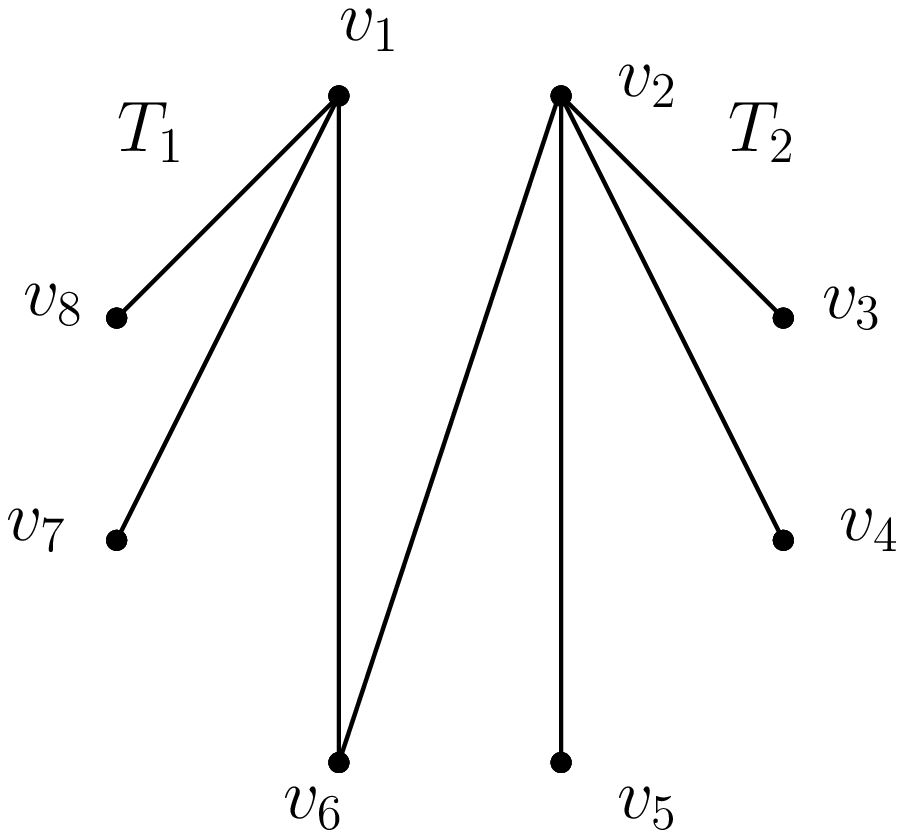}~b)
\caption{Two possible configurations of removed edges in a $K_8$ giving a graph of gonality $5$.}
\label{Threes}
\end{figure}
\end{subsection}
\end{section}

\begin{section}{Specialization map, triangulated punctured curves and skeleta} \label{sp}
In this section we will present the main tools we will use in order to give an interpretation of the combinatorial results in terms of algebraic curves. As we mentioned in the introduction, linear systems can specialize from curves to graphs as explained in \cite{Bak}. 

\begin{subsection}{Specialization map and inequality} 
Let $R$ be a complete discrete valuation ring with field of fractions $K$ and algebraically closed residue field~$k$. Let $X$ be a smooth, proper, geometrically connected curve over $K$. A {\sl regular model} for $X$ is a regular, proper, flat surface $\mathfrak{X}$ over $R$ such that the generic fiber $\mathfrak{X}_{\eta}$ is $X$. The model is {\sl semistable} if the special fiber $\mathfrak{X}_s$ is reduced, the singular points are ordinary double points and the irreducible components isomorphic to $\mathbb{P}^1$ meet the other components in at least $2$ points. If moreover the irreducible components of $\mathfrak{X}_s$ are smooth the model is {\sl strongly semistable}. The dual graph $\Gamma_{\mathfrak{X}_s}$ of the special fiber $\mathfrak{X}_s$ is constructed by considering a vertex $v_i$ for every irreducible component $c_i$ of the fiber, and an edge $e_{ij}$ connecting two vertices $v_i$ and $v_j$ for every node between the components $c_i$ and $c_j$. The semistability of the model ensures that it is possible to construct the dual graph and the strongly semistability that the dual graph has no loop edges. 

The Zariski closure in $\mathfrak{X}$ of a rational point $x \in X(K)$ intersects exactly one irreducible component of the special fiber in a smooth point. In this way to every rational point we can associate a vertex in the dual graph. Extending by linearity we obtain the {\sl specialization map} 
\[
\rho: \textrm{Div}(X(K)) \rightarrow \textrm{Div}(\Gamma_{\mathfrak{X}_s}).
\]
This map is compatible with field extension, as we explain now. Let $K'$ be a field extension of $K$, $R'$ its valuation ring, and $X_{K'} = X \times_K K'$. There exists a unique relatively minimal regular semistable model $\mathfrak{X}'$ over $R'$ which dominates $\mathfrak{X} \times_{R} R'$. The dual graph of the special fiber of $\mathfrak{X}'$ is obtained by subdividing every edge of $\Gamma_{\mathfrak{X}_s}$ in $e$ parts, where $e$ is the ramification index of $K'/K$. So, if we assign length one to all the edges of the dual graph $\Gamma_{\mathfrak{X}_s}$, giving it the structure of a metric graph, a field extension will correspond to adding $e-1$ vertices in every edges all at distance $\frac{1}{e}$. In this way, let $\overline{K}$ be an algebraic closure of $K$, we obtain a well-defined map 
\[
\tau: X(\overline{K}) \rightarrow \Gamma,
\]
which we can extend by linearity to a homomorphism 
\[
\tau_{*}: \textrm{Div}(X_{\overline{K}}) \cong \textrm{Div}(X(\overline{K})) \rightarrow \textrm{Div}(\Gamma).
\]
By specialization, the rank of the divisor can only increase: 
\begin{thm}[Specialization Lemma, Corollary 2.10 in \cite{Bak}] Let $D$ be a divisor on $X_{\overline{K}}$. The following inequality holds:
\[
\textrm{rk}_{\Gamma}(\tau_{*}(D)) \geq  \textrm{rk}_{X_{\overline{K}}}(D).
\]
\end{thm}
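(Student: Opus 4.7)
The plan is to reduce the statement to the case where $D$ is $K$-rational, prove that case by assembling three elementary ingredients, and then lift to $\overline{K}$ by a base-change argument. Set $r=\textrm{rk}_{X_{\overline{K}}}(D)$; the inequality is trivial when $r=-1$, so assume $r\geq 0$. To bound $\textrm{rk}_\Gamma(\tau_*(D))$ from below by $r$, it suffices, by Theorem~\ref{rankdet}, to exhibit for every effective divisor $E=(v_1)+\cdots+(v_r)$ supported on a fixed vertex set of $\Gamma$ an effective divisor linearly equivalent to $\tau_*(D)-E$.

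First I would handle the $K$-rational case ($D\in\textrm{Div}(X(K))$ with the specialization map $\rho$ into $\Gamma_{\mathfrak{X}_s}$), assembling three ingredients. (i) The map $\rho$ sends effective divisors to effective divisors, which is immediate from its definition via Zariski closure. (ii) The map $\rho$ preserves linear equivalence: for every $f\in K(X)^{*}$ the divisor $\rho(\textrm{div}(f))$ equals the principal divisor on $\Gamma_{\mathfrak{X}_s}$ of the piecewise linear function $F$ whose value at the vertex $v_i$ is $\textrm{ord}_{c_i}(f)$ (the multiplicity of the component $c_i$ in the vertical part of $\textrm{div}(f)$ on $\mathfrak{X}$), extended linearly along each edge. (iii) Every vertex $v$ of $\Gamma_{\mathfrak{X}_s}$ lies in the image of $\rho|_{X(K)}$: pick any smooth $k$-point of the component $c_v$ and lift it to a section of $\mathfrak{X}\to\textrm{Spec}(R)$ via Hensel's lemma, using that $R$ is complete and $k$ algebraically closed. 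Using (iii), lift each $v_i$ to $x_i\in X(K)$ and set $\tilde{E}=(x_1)+\cdots+(x_r)$. Since $\textrm{rk}_X(D)\geq r$, there exists an effective $D'\sim D-\tilde{E}$, and applying $\rho$ together with (i)--(ii) produces the effective divisor $\rho(D')\sim\rho(D)-E$.

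To pass from $X(K)$ to $X_{\overline{K}}$, I would choose a finite extension $K'/K$ such that both the support of $D$ and the lifts $x_i$ produced in step (iii) lie in $X(K')$. The minimal regular semistable model $\mathfrak{X}'$ over the valuation ring of $K'$ has dual graph obtained from $\Gamma_{\mathfrak{X}_s}$ by subdividing each edge into $e$ equal pieces, where $e$ is the ramification index of $K'/K$; this is merely a finer vertex set for the \emph{same} underlying metric graph $\Gamma$, and the metric-graph rank is independent of the chosen vertex set. Applying the previous step to the specialization map attached to $\mathfrak{X}'$ (with $E$ now supported on vertices of the subdivided graph) yields the required effective divisor equivalent to $\tau_*(D)-E$, completing the proof.

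The main technical obstacle is ingredient (ii). Its verification requires a careful analysis of the divisor of $f$ viewed as a Weil divisor on the arithmetic surface $\mathfrak{X}$: decomposing $\textrm{div}(f)$ into horizontal and vertical parts, one must identify the vertical coefficients with the values $F(v_i)$ and match the local intersection numbers between horizontal components and the special fiber at each node with the outgoing slopes of $F$ at the corresponding vertex. Here the hypotheses that $\mathfrak{X}$ be regular and strongly semistable are essential, as they guarantee that intersection numbers at nodes equal $1$ and that $F$ has integer slopes along edges. The remaining ingredients (effectivity preservation, Hensel lifting of smooth points, and the reduction to $E$ supported on vertices via Theorem~\ref{rankdet}) are essentially formal once (ii) is in hand.
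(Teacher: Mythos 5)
The paper does not prove this statement at all---it is quoted directly from Baker's paper \cite{Bak} (Corollary 2.10 there)---and your argument is in essence a correct reconstruction of Baker's own proof: the three ingredients (preservation of effectivity, the vertical-part intersection computation showing $\rho$ carries principal divisors to principal divisors, and Hensel lifting of vertices to $K$-points), the reduction via rank-determining sets to divisors $E$ supported on a vertex set, and the base-change step handling $\overline{K}$-points and edge subdivision are exactly the components of the argument in \cite{Bak}. The only quibble is a possible sign discrepancy in your ingredient (ii) between $F(v_i)=\textrm{ord}_{c_i}(f)$ and the outgoing-slope convention for $\textrm{div}(F)$ used in this paper, which is harmless since either sign produces a principal divisor and that is all the argument needs.
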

The framework introduced by Baker works better in the case of maximally degenerate curves, that is when all the irreducible components of the special fiber have genus zero. This is the case we deal with in this paper. In order to make this theory also interesting in the case when the components have positive genus, extra structure has been added to the graph in order to carry and take care of the data of the genera through the specialization, leading to the definitions of augmented metric graphs and metrized complexes of curves. We refer the interested reader to \cite{AC} and \cite{AB} for more details.

\end{subsection}

\begin{subsection}{Lifting problems}
As we mentioned in the introduction a natural question that arise in this context is whether for divisor on a metric graph there exists a curve and a divisor on it of the same rank specializing to it. More precisely, 
\begin{ques} \label{question1}
 Let $D$ be a divisor on a metric graph $\Gamma$. Does there exist a curve $X$, a regular model for it with dual graph $\Gamma$, and a divisor $\widetilde{D} \in \textrm{Div}(X_{\overline{K}})$, such that $\tau_* (\widetilde{D}) = D$ and $\textrm{rk}_X(\widetilde{D}) = \textrm{rk}_{\Gamma}(D)$? 
\end{ques}

\begin{defi} \label{lifting} Let $D$ be a divisor on a metric graph $\Gamma$. We say that $D$ is {\sl liftable} if the previous question has a positive answer. In the notation above, the divisor $\widetilde{D}$ is called a {\sl lift} of $D$. 
\end{defi}
We will address the problem whether the divisors constructed in some of the combinatorial results of Section \ref{2} are liftable. Other lifting problems addressed in the literature question themselves if given a metric graph there exists a curve such that any divisor on the graph is liftable, see for example \cite{KY1, KY2}.
\begin{ques} \label{question2}
 Let $\Gamma$ be a metric graph. Does there exist a curve $X$, a regular model for it with dual graph $\Gamma$, such that for every divisor $D \in \textrm{Div}(\Gamma)$ there exists a divisor $\widetilde{D} \in \textrm{Div}(X_{\overline{K}})$, such that $\tau_* (\widetilde{D}) = D$ and $\textrm{rk}_X(\widetilde{D}) = \textrm{rk}_{\Gamma}(D)$? 
\end{ques}
\begin{rem} We remark that the lift $\widetilde{D}$ is a divisor on the generic fiber which is smooth and it specializes to a divisor on the special fiber which is singular. Anyway we do not have any information about the rank of the divisor on the special fiber. 
\end{rem}

\end{subsection}

\begin{subsection}{Triangulated punctured curves and their skeleta}\label{skeleta} It is possible to associate to a curve a metric graph through a {\sl semistable vertex set} using Berkovich analytification. We will briefly introduce this construction and we refer to \cite[\S5]{BPR} and \cite[\S3]{ABBR1} for further reading. 

Let $K$ be an algebraically closed field, complete with respect to a nontrivial non-Archimedean valuation. Let $R$ be its valuation ring, $k$ its residue field and $\Lambda$ its value group. Let $X$ be a smooth, proper, connected, algebraic curve over $K$, and $D\subset X(K)$ a finite set of closed points, which we call {\sl punctures}. With the symbol $X^{\textrm{an}}$ we indicate the analytification of the curve $X$ as defined in \cite{Ber} (see also \cite[\S3]{BPR}). 

\begin{defi} A {\sl semistable vertex set of $X$} is a finite set $V$ of type 2 points of $X^{\textrm{an}}$ (see \cite{BPR}), such that $X^{\textrm{an}} \setminus V$ is a disjoint union of open balls and finitely many generalized open annuli. A {\sl semistable vertex set of $(X,D)$}~is a semistable vertex set of $X$ such that the points of $D$ are contained in distinct open balls of $X^{\textrm{an}} \setminus V$.

A {\sl triangulated punctured curve} $(X, V \cup D)$ is a smooth, connected, proper algebraic curve $X$ over $K$ with a finite set $D$ of punctures and a semistable vertex set $V$ of $(X,D)$. 
\end{defi}
By definition, a semistable vertex set induces a decomposition 
\[
X^{\textrm{an}} \setminus (V \cup D) = A_1 \cup A_2 \cup \dots \cup A_n \cup \bigcup_{j} B_j,
\]
where the $A_i$ are generalized open annuli and $B_i$ open balls. 
\begin{defi} The skeleton $\Sigma(X, V \cup D)$ is the subset 
\[
\Sigma(X, V \cup D) = V \cup D \cup \bigcup_{i=1}^n \Sigma(A_i),  
\]
where, using the notation of \cite[\S3]{ABBR1}, the generalized open annulus $A_i$ is a $K$-analytic space isomorphic to $S(a_i)_{+} = \textrm{val}^{-1}((0, \textrm{val}(a_i))) \subseteq \mathbb{A}^{1, \textrm{an}}$, for a certain $a_i$ in the maximal ideal of $R$; and its skeleton is $\Sigma(A_i)$ is identified with the interval $(0, \textrm{val}(a_i))$. 
The notation $\textrm{val}$ indicates the valuation map on the analytification of the affine line 
\[
\textrm{val}: \mathbb{A}^{1,\textrm{an}} \rightarrow \mathbb{R} \cup \{\infty\}, \, \textrm{val}(x) = -\log |x|.
\] 
\end{defi}
The skeleton $\Sigma(X, V \cup D)$ of a curve $X$ has therefore the structure of a $\Lambda$-metric graph with vertex set $V \cup D$ (where the vertices in $D$ are infinite), and edges corresponding to the skeleton of the different annuli in the decomposition. The length of the edges is given by the modulus of the annulus. 

There is a canonical embedding $\tau: \Sigma(X, V \cup D) \rightarrow X^{\textrm{an}}$ and a retraction map $\textrm{red}: X^{\textrm{an}} \rightarrow \Sigma(X, V \cup D)$, that sends a point $p \in X^{\textrm{an}}$ not in the skeleton to its connected component in $X^{\textrm{an}} \setminus \Sigma(X, V \cup D)$. 

\vspace{\baselineskip}
The reverse holds too and metric graphs can be lifted to triangulated punctured curves: 
\begin{thm}[Theorem 3.24 of \cite{ABBR1}] Let $\Gamma$ be a $\Lambda$-metric graph. There exists a triangulated punctured curve $(X,V \cup D)$ such that the skeleton $\Sigma(X, V \cup D)$ is isomorphic to $\Gamma$. 
\end{thm}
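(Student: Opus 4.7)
The plan is to realize $\Gamma$ as the skeleton of an explicit strictly semistable formal model built by gluing projective lines along formal annuli whose moduli match the prescribed edge lengths. First fix a vertex set $V(\Gamma)$ containing all essential and all infinite vertices; since $\Gamma$ is a $\Lambda$-metric graph, we can refine this to arrange that every finite edge length lies in $\Lambda$. Assign to every finite $v \in V(\Gamma)$ a copy $C_v \simeq \mathbb{P}^1_k$, and choose, for each tangent direction in $T_v(\Gamma)$, a distinct smooth $k$-point of $C_v$.

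For each finite edge $e$ joining $u$ to $v$ with length $\ell(e)$, pick $a_e \in R$ with $\mathrm{val}(a_e) = \ell(e)$ and glue formal neighborhoods of the two marked points on $\mathbb{P}^1_R$ above $C_u$ and $C_v$ via the formal annulus $\mathrm{Spf}\bigl(R[[x,y]]/(xy - a_e)\bigr)$; on the special fiber this introduces an ordinary double point between $C_u$ and $C_v$. For each infinite edge $e$ from $v$ to an infinite vertex, attach to the marked point of $C_v$ a formal disk $\mathrm{Spf}(R[[x]])$ and select a $K$-point on its generic fiber, which will be placed in the puncture set $D$. Gluing these pieces produces a proper flat strictly semistable formal $R$-scheme $\mathfrak{X}$ whose special fiber is a nodal union of $\mathbb{P}^1_k$'s with dual graph $\Gamma$; by Grothendieck's algebraization theorem, $\mathfrak{X}$ is the formal completion of an algebraic $R$-scheme whose generic fiber $X$ is a smooth, proper, connected curve over $K$. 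Let $V$ be the set of type-$2$ points of $X^{\mathrm{an}}$ corresponding to the generic points of the components $C_v$ for finite $v \in V(\Gamma)$.

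To verify $\Sigma(X, V \cup D) \simeq \Gamma$ I would invoke the standard correspondence between strictly semistable formal models and semistable vertex sets: $X^{\mathrm{an}} \setminus (V \cup D)$ decomposes as a disjoint union of the open annuli retracting to the nodes of $\mathfrak{X}_s$ and open balls retracting to the smooth points of $\mathfrak{X}_s$, with each puncture sitting in its own open ball (so $V$ is in fact a semistable vertex set of $(X,D)$). By construction the annulus attached to a finite edge $e$ has modulus $\mathrm{val}(a_e) = \ell(e)$, so its skeleton is an interval of length $\ell(e)$, while each infinite edge contributes an infinite ray from the vertex in $V$ to the puncture in $D$. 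Assembling these intervals and rays along the incidences dictated by the gluing recovers $\Gamma$ as a $\Lambda$-metric graph.

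The main technical obstacle is the algebraization step: one has to check that the formally glued object $\mathfrak{X}$ is genuinely a proper formal $R$-scheme and that Grothendieck's existence theorem applies, producing an honest algebraic curve rather than merely a formal or rigid-analytic one. Once algebraicity is in place, the identification of $\Sigma(X, V \cup D)$ with $\Gamma$ reduces to the explicit description of the skeleton of a formal annulus of modulus $\ell(e)$ together with the compatibility between the Berkovich skeleton and strictly semistable formal models recalled in Section \ref{skeleta}.
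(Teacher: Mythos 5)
This theorem is imported verbatim from [ABBR1] (Theorem 3.24) and the paper gives no proof of it, so there is no in-paper argument to compare against; your construction does, however, follow the same strategy as the cited source: realize $\Gamma$ as the incidence/length data of a totally degenerate nodal special fiber, smooth each node with a parameter $a_e$ of valuation $\ell(e)$, and read off the skeleton from the resulting semistable model via the correspondence recalled in Section \ref{skeleta}. The one step you should not gloss over is the gluing-plus-algebraization: literally gluing two copies of $\mathbb{P}^1_R$ along $\mathrm{Spf}\bigl(R[[x,y]]/(xy-a_e)\bigr)$ is not well defined as stated (the generic fibers of the pieces do not see each other through formal completions), and the clean route is either deformation theory of the nodal $k$-curve $X_0=\bigcup C_v$ --- unobstructed since $X_0$ is a curve, with the smoothing parameter at each node prescribed to be $a_e$ --- followed by Grothendieck existence, for which one must also lift an ample line bundle (possible because $H^2(X_0,\mathcal{O}_{X_0})=0$), or else Raynaud-style admissible formal/analytic patching. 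Note also that the result in [ABBR1] is stated and proved for metrized complexes, i.e.\ it additionally prescribes the components $C_v$ and the reductions of the marked points, which is strictly more than the metric-graph statement quoted here; your argument, restricted to $C_v\simeq\mathbb{P}^1_k$, recovers exactly the special case the paper uses.
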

The result in \cite{ABBR1} is stated using the terminology of metrized complexes. In fact, the structure of the skeleton of a curve can be enriched to a metrized complex. For the purposes of this paper, anyway, it is enough to only deal with metric graphs.  

\vspace{\baselineskip}
We recall that we are assuming $\overline{K} = K$. The specialization map $\tau: X(K) \rightarrow \Gamma_{\mathfrak{X}_s}$ can also be obtained via the analytification $X^{\textrm{an}}$  of the curve $X$, without requiring that the model $\mathfrak{X}$ is regular. In fact if $\mathfrak{X}$ is a semistable model of $X$, by defining the length of an edge $e$ of the dual graph $\Gamma_{\mathfrak{X}_s}$, corresponding to a singular point $x$ of the special fiber, as the modulus of the open annulus $\textrm{red}^{-1}(x)$, we obtain a skeleton for $X^{\textrm{an}}$. The graph $\Gamma_{\mathfrak{X}}$ is canonically embedded in $X^{\textrm{an}}$. As we said before, there is a retraction map $\tau: X^{\textrm{an}} \rightarrow \Gamma_{\mathfrak{X}_s}$ that induces by linearity the map $\tau_{*}: \textrm{Div}(X) \rightarrow \textrm{Div}(\Gamma_{\mathfrak{X}_s})$. 

It is important to remark that the skeleta defined in terms of semistable models and the ones defined in terms of semistable vertex sets, as we introduced in \ref{skeleta} are in bijection. This follows from a bijection between the set of semistable models and the set of semistable vertex sets of a curve $X$ (see Theorem 5.38 of \cite{BPR}).  
\end{subsection}

\end{section}

\begin{section}{Plane curves with singularities} \label{pc}
Let $K$ be an algebraically closed field complete with respect to a non-Archimedean valuation and $R$ its valutation ring. The specialization inequality tells us that if $X$ is a smooth, proper, connected curve and $\mathfrak{X}$ a strongly semistable model of $X$, with special fiber $\mathfrak{X}_s$ and dual graph $\Gamma_{\mathfrak{X}_s}$, the gonality of $X$ is an upper bound for the gonality of $\Gamma_{\mathfrak{X}_s}$. So now we want to construct a smooth curve $X$ and a strongly semistable model for it, such that the dual graph of the special fiber is $K_d'$ and the gonality of $X$ is the gonality of the $K_d'$. 

\begin{subsection}{Smooth plane curves} \label{pcs} The complete graph $K_d$ is the dual graph of the special fiber of a model associated to a smooth plane curve of degree~$d$. Indeed we consider the irreducible curve $X$ over $K$ given by the following equation in projective coordinates $x$, $y$ and $z$:
\begin{equation} \label{planecurve}
F(x,y,z) = t \cdot f(x,y,z) + \prod_{i=1}^d l_i(x,y,z), 
\end{equation}
where $f(x,y,z)$ is a general polynomial of degree $d$ with coefficients with non-negative valuation. The polynomials $l_i(x,y,z)$ are of degree one with coefficient with non-negative valuation, such that modulo the maximal ideal they give the equations in $k$ of $d$ lines in the plane, no three of which are concurrent. Finally $t$ is an element of the maximal ideal of $R$. Since the polynomial $f$ is general, the curve $X$ is smooth. We consider the surface $\mathfrak{X}$ over $R$ given by the same equation (\ref{planecurve}).

It is easy to check that the surface $\mathfrak{X}$ is a model for the smooth plane curve $X$ and that the special fiber $\mathfrak{X}_s$ is the union of $d$ lines each of which intersecting the other in double points, so the dual graph is the complete graph $K_d$ on $d$ vertices.
 
The smooth plane curve $X$ of degree $d$ has gonality $d-1$ (see \cite{Nam} for the same statement for smooth complex plane curve). By the specialization inequality we know that the gonality $d-1$ of the dual graph is a lower bound; moreover every pencil of lines passing through a point on $X$ cuts out a $g^1_{d-1}$. We recall from Example \ref{exgon} that for every subset $J$ of $d-1$~vertices, the divisor $D_J = \sum_{i\in J} (v_i)$ on $K_d$ has rank $1$ and degree~$d-1$. We can construct a divisor on the curve $X$ that specialize to a divisor $D_J$. We consider a point on the curve specializing to the component corresponding to the vertex not in $J$. It is possible to find a line through this point intersecting the curve in $d-1$ points specializing to the other components $l_i$.  The $d-1$ points specialize to the divisor $D_J$. This gives us the following result (see Definition \ref{lifting}):
\begin{prop} There exists a smooth curve $X$ and a model for it, such that the dual graph of the special fiber is $K_d$ and the gonality of $X$ is the gonality of $K_d$. The divisors $D_J$ is liftable to divisors on the curve.
\end{prop}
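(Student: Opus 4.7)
The plan is to verify the two halves of the proposition --- matching gonalities, and liftability of each divisor $D_J$ --- starting from the explicit model already written down in equation (\ref{planecurve}).

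First I would check that $\mathfrak{X} \subset \mathbb{P}^2_R$ cut out by $F = t\,f + \prod_i l_i$ is a strongly semistable regular model of $X$ with special-fibre dual graph $K_d$. Genericity of $f$ makes $X = \mathfrak{X}_\eta$ smooth over $K$; reduction modulo the maximal ideal gives $\mathfrak{X}_s = \bigcup_i \{\bar l_i = 0\}$, a union of $d$ projective lines meeting pairwise in $\binom{d}{2}$ distinct nodes (no three concurrent by hypothesis). A short local computation at each node shows that the completed local ring on $\mathfrak{X}$ takes the form $R[[u,v]]/(uv - ct)$ with $c \in R^{\times}$, so $\mathfrak{X}$ is regular; since each component is $\mathbb{P}^1$ meeting the rest in $d-1$ points, the model is strongly semistable and its dual graph is $K_d$. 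For the gonality I would appeal to the theorem of Namba cited as \cite{Nam}: a smooth plane curve of degree $d$ has gonality exactly $d-1$, realized by projecting from any point of $X$, and the matching combinatorial value is supplied by Corollary \ref{complete} and Example \ref{exgon}.

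For liftability of $D_J$, fix a $(d-1)$-subset $J \subset V(K_d)$ and let $v_{i_0}$ be the omitted vertex. Choose a $K$-rational point $P \in X(K)$ reducing to a smooth point of the component $\bar l_{i_0}$ (any smooth $k$-point of $\bar l_{i_0}$ away from the nodes lifts to a section of $\mathfrak{X} \to \textrm{Spec}\, R$ by Hensel's lemma). The pencil of lines in $\mathbb{P}^2_K$ through $P$ cuts out a $g^1_{d-1}$ on $X$, its members being the divisors $L \cdot X - P$. For a sufficiently general line $L$ in this pencil the reduction $\bar L$ is a $k$-line distinct from every $\bar l_i$ and avoiding every node of $\mathfrak{X}_s$; hence $\bar L$ meets each $\bar l_i$ transversely in one smooth point and $L \cdot X$ consists of $d$ distinct $K$-points, one specializing to each vertex of $K_d$. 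Setting $\widetilde D := L \cdot X - P$ therefore produces a divisor of degree $d-1$ on $X$ with $\tau_*(\widetilde D) = D_J$. Membership in the pencil forces $\textrm{rk}_X(\widetilde D) \geq 1$, while the Specialization Lemma gives $\textrm{rk}_X(\widetilde D) \leq \textrm{rk}_{K_d}(D_J) = 1$; so equality holds and $\widetilde D$ lifts $D_J$ in the sense of Definition \ref{lifting}.

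The only delicate point in this argument is the genericity of $\bar L$: one must verify that the lines through $P$ whose reduction either coincides with some $\bar l_i$ or passes through a node of $\mathfrak{X}_s$ form a proper closed subset of the $\mathbb{P}^1$-pencil, so that a generic $L$ has its full intersection with $X$ supported on the smooth locus of $\mathfrak{X}_s$ and therefore specializes to exactly $D_J$ after removing $P$. Everything else is a routine combination of the preceding setup with the Specialization Lemma.
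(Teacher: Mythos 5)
Your proposal is correct and follows essentially the same route as the paper: the explicit degeneration $t\,f + \prod_i l_i$, Namba's theorem for the gonality of the smooth plane curve combined with the lower bound from the graph, and a pencil of lines through a point specializing to the omitted component to produce a divisor of $L\cdot X - P$ type that specializes to $D_J$. You supply more detail than the paper does (the local regularity check, the genericity of $\bar L$, and the explicit rank comparison via the Specialization Lemma), but the underlying argument is the same.
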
 
\end{subsection}

\begin{subsection}{Singular points of multiplicity $h$} \label{mulh} Theorem \ref{hconf} states that the gonality of $K_d'=K_d \setminus K_h$ is $d-h$. Let $X$ be the irreducible curve over $K$ given by the following equation: 
\begin{equation} \label{curvasing}
 G(x,y,z) = t \cdot g(x,y,z) + \prod_{i=1}^d l_i(x,y,z),
\end{equation}
where $g(x,y,z)$ is a general polynomial of degree $d$ with non-negative valued coefficients having a singular point of multiplicity $h$. The polynomials $l_i$ are of degree one with coefficients with non-negative valuation such that modulo the maximal ideal they give the equations with coefficients in $k$ of $d$ lines in the plane, $h$ of which intersect in the singular point of $g$ and intersect the others $d-h$ lines in ordinary double points. 

Let $\mathfrak{X}$ be the model over $R$ given by the equation (\ref{curvasing}).
By taking the closure of the singular point in $\mathbb{P}_{\mathbb{Z}}^2 \times \textrm{Spec}(R)$ and blowing up along it, we obtain a strongly semistable model $\mathfrak{X}'$ of a smooth curve $X'$, with special fiber $\mathfrak{X'}_s$ such that the $h$ lines that were intersecting in the $h$-ple point, now are not intersecting anymore. The dual graph of the special fiber is exactly~$K_d \setminus K_h$.

The curve $X'$ has gonality $d-h$, because the gonality $d-h$ of the dual graph $K_d \setminus K_h$ is a lower bound. Moreover the lines through the singular point of multiplicity $h$ induce a $g^1_{d-h}$. See \cite{OS} for the same result for irreducible plane curve over the complex number field. If we indicate with $l_1, l_2, \dots, l_h$ the lines intersecting in the point of multiplicity $h$, then the $K_h$ removed has as vertex set $\{v_1,  v_2, \dots, v_h\}$. The divisor of rank $1$ described in the proof of Proposition \ref{complete} is $D = (v_{h+1}) + (v_{h+2}) + \cdots + (v_d)$. Again there is a line through the point of multiplicity $h$ intersecting the curve in points specializing to the other components, inducing a divisor on $X'$ that specializes to $D$. Therefore we have the following proposition:
\begin{prop} There exists a smooth curve $X'$ and a model for it, such that the dual graph of the special fiber is $K_d\setminus K_h$ and the gonality of $X'$ is the gonality of the graph. The divisor $D = (v_{h+1}) + (v_{h+2}) + \cdots + (v_d)$ is liftable to a divisor on the curve. 
 \end{prop}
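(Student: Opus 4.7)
The plan is to make rigorous the construction sketched in the paragraphs preceding the proposition. First I would choose the equation $G(x,y,z) = t\cdot g(x,y,z) + \prod_{i=1}^d l_i(x,y,z)$ over $R$, where $g$ is a sufficiently general homogeneous polynomial of degree $d$ with a single ordinary singular point $P$ of multiplicity $h$ and with integral coefficients, and the $l_i$ are chosen so that, modulo the maximal ideal, they define $d$ lines in general position subject to the constraint that exactly $l_1,\dots,l_h$ pass through $\bar P$ and the remaining $d-h$ lines meet all others transversally. I would then verify, using the genericity of $g$, that the generic fiber $X$ is a smooth, irreducible curve over $K$.

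Next I would construct a strongly semistable model. Starting from the naive model $\mathfrak{X} \subset \mathbb{P}^2_R$ cut out by $G$, the special fiber is a union of $d$ lines with an $h$-fold point at $\bar P$, so $\mathfrak{X}$ is not semistable there. I would take the Zariski closure $\mathcal{P}$ of $\bar P$ inside $\mathbb{P}^2_R$ and blow up along $\mathcal{P}$. Standard local computation shows that the strict transform $\mathfrak{X}'$ is regular, that the $h$ components corresponding to $l_1,\dots,l_h$ are separated in the new special fiber, and that no new nodes appear between pairs of these $h$ components. Thus $\mathfrak{X}'_s$ is a reduced union of $d$ rational curves, pairwise meeting transversally, with precisely the $\binom{h}{2}$ original intersection points among $l_1,\dots,l_h$ resolved. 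Reading off the dual graph then gives exactly $K_d \setminus K_h$.

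For the gonality statement, the specialization inequality (and Theorem \ref{hconf}) gives $\mathrm{gon}(X') \geq \mathrm{gon}(K_d \setminus K_h) = d-h$. For the reverse inequality, I would exhibit a $g^1_{d-h}$ on $X'$ explicitly using the pencil of lines through $P$: after pulling back to the blowup, each such line meets $X'$ residually in $d-h$ points away from the exceptional divisor (the $h$ branches through $P$ are absorbed by the exceptional fiber in the blowup), producing a base-point-free pencil of degree $d-h$.

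Finally, to lift the divisor $D = (v_{h+1}) + \cdots + (v_d)$, I would pick a specific line through $P$ that meets each of the components corresponding to $v_{h+1},\dots,v_d$ transversally at a smooth $K$-point of the special fiber; lifting these intersection points to $R$-points of $\mathfrak{X}'$ and taking the generic fiber produces a divisor $\widetilde D$ on $X'$ of degree $d-h$ with $\tau_*(\widetilde D) = D$, and it has rank at least $1$ because it is a member of the pencil above, while the specialization inequality forces the ranks to agree. The main obstacle I foresee is the blowup analysis: checking that $\mathfrak{X}'$ is indeed regular and strongly semistable, that no unexpected components or self-intersections are introduced on the exceptional divisor, and that the arithmetic genus computation matches the geometric genus of $X'$ so that no further normalization is needed.
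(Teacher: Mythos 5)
Your overall route --- the degeneration $G = t\cdot g + \prod_{i=1}^d l_i$, blowing up along the section through the multiplicity-$h$ point, reading off the dual graph $K_d\setminus K_h$, cutting out the $g^1_{d-h}$ with the pencil of lines through that point, and lifting $D$ via a single member of the pencil --- is exactly the one the paper takes. There is, however, one assertion that, taken literally, derails the argument: you claim to verify that the generic fiber $X$ of the naive model is \emph{smooth}. It cannot be. A smooth plane curve of degree $d$ has gonality $d-1$; the pencil of lines through one of its points cuts out a $g^1_{d-1}$ after removing the base point, not a $g^1_{d-h}$; and its genus $\binom{d-1}{2}$ does not match the first Betti number $\binom{d-1}{2}-\binom{h}{2}$ of $K_d\setminus K_h$, so no strongly semistable model of it could have that dual graph. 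What the construction needs is that $X$ be irreducible with an \emph{ordinary singular point of multiplicity $h$} at $P$ and smooth elsewhere. For this it is not enough to constrain the reductions $\overline{l}_1,\dots,\overline{l}_h$ to pass through $\overline{P}$, as you do: the lines $l_1,\dots,l_h$ themselves must pass through $P$ over $R$, so that both $t\cdot g$ and $\prod l_i$ vanish to order $h$ at $P$ and hence so does $G$. The smooth curve of the statement is $X'$, the strict transform of $X$ under the blowup, i.e.\ the normalization of $X$.

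This is a local fix rather than a wrong approach: your subsequent steps --- the $h$ branches of $X$ through $P$ being absorbed by the exceptional divisor, the residual intersection of degree $d-h$, and the arithmetic-versus-geometric genus check you flag at the end --- all tacitly assume $X$ is singular at $P$, and they are the correct steps. Replace ``$X$ is smooth'' by ``$X$ has a single ordinary $h$-fold point at $P$ and is smooth elsewhere'' (which does follow from genericity of $g$ and the stated choice of lines), take $X'$ to be the strict transform, and the rest of your proposal, including the lifting of $D$ by a general member of the pencil and the rank comparison via the specialization inequality, agrees with the paper's proof.
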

\end{subsection}

\begin{subsection}{Plane curves with nodes} \label{nodes} The next step is considering the case when we remove $i$ edges not forming any $K_h$ configuration, with $1 \leq i \leq d-2$ and $h\geq3$. The combinatorial result about the gonality of such a graph can also be interpreted in terms of plane curves with singularities. Let $Y$ be the reduced curve in the plane represented by the $d$ lines $l_j, \ 1 \leq j \leq d$ each of which intersects the others in ordinary double points. We fix $i$ of these nodes and by Proposition 2.11 and Theorem 2.13 of \cite{Tan}, there exist a deformation of $Y$ over a smooth irreducible curve $T$ of finite type over an algebraically closed field $K$ of characteristic zero, such that the other fibers are plane curves with only $i$ nodes specializing to the nodes we fixed in the fiber $Y$. Moreover we can take the generic curve $X$ on this family to be irreducible. Indicating $t_0 \in T$ the point such that the fiber at $t_0$ is $Y$, we have a model over $R = \mathcal{O}_{T, t_o}$ of $X$. By blowing up the assigned nodes in the family we obtain a smooth generic fiber $X'$ and a special fiber such that the dual graph is $K_d$ with $d-2$ edges removed. Therefore we obtained a smooth curve $X'$ and a strongly semistable model~$\mathfrak{X}$ such that the dual graph of the special fiber is $K_d'$.  

\vspace{\baselineskip}
By the results of Coppens and Kato in \cite{CK} the gonality of the normalization of integral plane curves of degree $d\geq 3$, with up to $d-2$ nodes  is $d-2$. Moreover if $d \geq 2k+3$ and $i \leq kd- (k+1)^2 +2$  for some $k >0$, then the $g^1_{d-2}$ is cut out by a pencil of lines: Given $p: X' \rightarrow X$ the normalization and $P$ a pencil of line, they denote 
\[P.X' = \{ p^{-1}(D.X) \ : D \in P\} \ \ \textrm{and} \ \ F(P.X') = \cap \{E \ : \ E \in P.X'\}.
\]
and they define $\{E - F(P.X') \ : \ E \in P.X'\}$ the linear system induced on $X'$ by $P$. The divisors of rank~$1$ constructed in the proof of Theorem \ref{completegen} are of the form $D_{jk} = \sum_{i \not = j,k} (v_i)$, where $e_{jk} = v_jv_k$ is a removed edge. Consider the node of $X$ specializing to the node of intersection of the lines $l_j$ and $l_k$ on the fiber $Y$. By taking a pencil of lines through it, we can find a line that induces a divisor on $X'$ that specializes to $D_{jk}$. We proved the following result:
\begin{prop} There exists a smooth curve $X'$ and a model for it, such that the dual graph of the special fiber is $K_d'$ and the gonality of $X'$ is the gonality of $K'_d$. The divisors $D_{jk}$ are liftable to divisors on the curve.
\end{prop}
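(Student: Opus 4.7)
The plan is to construct a suitable family of plane curves using Tannenbaum's deformation theory, extract a strongly semistable model whose dual graph is $K_d'$, apply the gonality result of Coppens and Kato to the generic fibre, and lift each $D_{jk}$ by means of a pencil of lines through the node corresponding to the removed edge $e_{jk}$.

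First I would set up the family. Let $Y \subset \mathbb{P}^2_K$ be the reduced union of $d$ lines in general position, with $\binom{d}{2}$ nodes, each labelled by the pair of lines that meet at it. For each removed edge $e_\ell = v_{j_\ell} v_{k_\ell}$ in the prescribed list $\{e_1, \ldots, e_i\}$, mark the corresponding node $p_\ell$ of $Y$. By Proposition 2.11 and Theorem 2.13 of \cite{Tan}, there is a deformation of $Y$ over a smooth irreducible curve $T$ with a distinguished point $t_0$ such that the central fibre is $Y$ and the generic fibre is an irreducible plane curve $X$ of degree $d$ with exactly $i$ nodes specializing to $p_1, \ldots, p_i$. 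Localizing at $t_0$, set $R = \mathcal{O}_{T, t_0}$; one obtains a model $\mathfrak{X} \subset \mathbb{P}^2_R$ with central fibre $Y$. Near each marked $p_\ell$, $\mathfrak{X}$ is locally given by $xy = 0$ and is thus singular along the horizontal section $\overline{p_\ell}$. Blowing up $\mathbb{P}^2_R$ along each $\overline{p_\ell}$ and taking proper transforms, a standard two-chart local calculation shows that the resulting $\mathfrak{X}'$ is regular, its generic fibre $X'$ is the normalization of $X$ (hence smooth), and in the central fibre $\tilde l_{j_\ell}$ and $\tilde l_{k_\ell}$ no longer meet while all other pairwise intersections are preserved. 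The dual graph of $\mathfrak{X}'_s$ is therefore exactly $K_d' = K_d \setminus \{e_1, \ldots, e_i\}$.

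Then I would pin down the gonality. By Theorem \ref{completegen}, $\textrm{gon}(K_d') = d - 2$ since the removed edges do not contain a $K_3$ configuration. The specialization inequality gives $\textrm{gon}(X') \geq d - 2$. For the reverse inequality, the result of Coppens and Kato in \cite{CK} shows that the normalization of an integral plane curve of degree $d \geq 3$ with at most $d - 2$ nodes has gonality $d - 2$, which applies to $X'$; thus $\textrm{gon}(X') = d - 2$. Finally, to lift $D_{jk}$, fix a removed edge $e_{jk}$ and let $p \in X$ be the corresponding node. Consider the pencil $P$ of lines in $\mathbb{P}^2_K$ through $p$. For a general $L \in P$, the intersection $L \cap X$ equals $p$ (with multiplicity $2$, one per tangent direction of $X$ at $p$) together with $d - 2$ further simple points. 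Pulling back to $X'$, the two preimages of $p$ form the base divisor $F(P.X')$ of the induced pencil, so subtracting it yields an effective divisor $\widetilde D$ of degree $d - 2$ lying in a $g^1_{d-2}$ by the explicit description in \cite{CK}. A general $L$ through $p$ specializes to a general line through $p \in Y$ in the central fibre, meeting each $l_m$ with $m \neq j, k$ in one smooth point of $Y$; these points specialize via $\tau$ to the vertices $v_m$, so $\tau_*(\widetilde D) = \sum_{m \neq j, k} (v_m) = D_{jk}$ and the ranks on $X'$ and $K_d'$ both equal $1$.

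The main technical obstacle I expect is the middle step: verifying that blowing up along the $\overline{p_\ell}$ produces a strongly semistable model whose dual graph is exactly $K_d'$, with no spurious exceptional components appearing as extra vertices. The care required is a local-at-each-$p_\ell$ chart computation on the threefold blow-up, showing both that the proper transform is smooth and that the exceptional surface meets it only along two disjoint sections (which cut the central fibre in isolated points, rather than in a full component). Once this is checked, the rest of the argument is essentially the assembly of Tannenbaum's deformation, the Coppens--Kato gonality computation, and the standard specialization picture for pencils of lines.
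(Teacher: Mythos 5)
Your proposal follows essentially the same route as the paper: the Tannenbaum deformation of the union of $d$ lines with the marked nodes, blowing up the assigned nodes to get a strongly semistable model with dual graph $K_d'$, the Coppens--Kato theorem for the gonality of the normalization, and a pencil of lines through the node corresponding to $e_{jk}$ to lift $D_{jk}$. The only difference is that you spell out the local blow-up verification and the base-locus bookkeeping for the pencil in more detail than the paper does; the argument is correct.
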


In the just mentioned paper of Coppens and Kato it is also proved that the normalization of an integral plane curve of degree $d\geq 6$ with $d-1$ nodes has gonality $d-2$. So we cannot lift the graph $K_d'$ with $d-1$ edges removed, constructed in the proof of Theorem \ref{stable} using plane nodal models,  since they have gonality $d-3$. In the following section we will give a geometric interpretation using different techniques. 
\end{subsection}
\end{section}

\begin{section}{Lifting Harmonic Morphisms of Metric Graphs} \label{hm}
In order to give a geometric interpretation of Theorem \ref{stable} we use results from \cite{ABBR1} and \cite{ABBR2} about lifting of harmonic morphism of metric graphs to curves.  
\begin{subsection}{Harmonic morphisms} 
Harmonic morphism between finite graphs were studied by Baker and Norine in \cite{BN2}. Here we recall some definitions regarding the theory of harmonic morphisms between $\Lambda$-metric graphs following \cite{ABBR1} and \cite{ABBR2}, to which we refer for further reading.
\begin{defi} Let $\Gamma'$ and $\Gamma$ be $\Lambda$-metric graphs, $V(\Gamma')$ a vertex set of $\Gamma'$ and $V(\Gamma)$ a vertex set of $\Gamma$. A continuous map $\varphi: \Gamma'  \rightarrow \Gamma$ is a {\sl $(V(\Gamma'), V(\Gamma))$-morphism of $\Lambda$-metric graphs} if $\varphi(V(\Gamma')) \subseteq V(\Gamma)$, $\varphi^{-1}(E(\Gamma)) \subset E(\Gamma')$, and $\varphi$ restricted to an edge $e' \in E(\Gamma')$ is a dilation by some factor $d_{e'}(\varphi) \in \mathbb{Z}_{\geq 0}$. The continuous map $\varphi: \Gamma' \rightarrow \Gamma$ is called a {\sl morphism} if there exist vertex sets $V(\Gamma')$ and $V(\Gamma)$ of respectively $\Gamma'$ and $\Gamma$ such that $\varphi$ is a $(V(\Gamma'), V(\Gamma'))$-morphism. We say that $\varphi$ is {\sl finite} if $d_{e'}(\varphi) >0$ for every edge $e'$ of $\Gamma'$. The integer $d_{e'}(\phi)$ is called the {\sl degree of $\varphi$ along $e'$}.
\end{defi}

\begin{rem} For an edge $e'$ of $\Gamma'$, we have $\varphi(e') = v$, that is $e'$ is contracted to the vertex $v$, if and only if $d_{e'}(\varphi) = 0$. So the map is finite if no edges are contracted; equivalently if the preimage of every point of $\Gamma$ consists of a finite number of points.  
\end{rem}

\begin{defi} Let $\varphi: \Gamma' \rightarrow \Gamma$ be a morphism. Given $p' \in \Gamma'$, $v' \in T_{p'}(\Gamma')$ a tangent direction and $e' \in E(\Gamma')$ the edge in the direction of $v'$, the {\sl directional derivative $d_{v'}(\varphi)$ of $\varphi$ in the direction $v'$} is defined as $d_{v'}(\varphi) := d_{e'}(\varphi)$. 
Let $p=\varphi(p')$. The morphism $\varphi$ is {\sl harmonic at $p'$} if for every tangent direction $v \in T_p(\Gamma)$ the integer number
\[
d_{p'}(\varphi) = \sum_{\substack{v' \in T_{p'}(\Gamma), \\  \varphi(v') = v}} d_{v'}(\varphi)
\]
is independent of $v$. The number $d_{p'}(\varphi)$ is called the {\sl degree of $\varphi$ at $p'$}. 

If the morphism $\varphi$ is surjective and harmonic at every $p' \in \Gamma'$, we say that it is {\sl harmonic}. In this situation the number
\[
\textrm{deg}(\varphi) = \sum_{p' \in \Gamma', \varphi(p') = p} d_{p'}(\varphi) 
\]
does not depend on $p$ and it is called the {\sl degree of $\varphi$}. 
\end{defi}

The collection of metric graphs, together with harmonic morphisms between them, forms a category.

\begin{defi} Let $\varphi: \Gamma' \rightarrow \Gamma$ be a harmonic morphism of metric graphs. The {\sl ramification divisor} of $\varphi$ is the divisor $R = \sum_{p \in \Gamma'} R(p')(p')$ whose coefficient at the point $p' \in \Gamma$ is given by 
\[
R(p') = 2 \Big( d_{p'}(\varphi) -1 \Big) - \sum_{v' \in T_{p'}(\Gamma')} \Big( d_{v'}(\varphi) -1 \Big).
\]

Given a vertex $p' \in V(\Gamma')$ with $d_{p'}(\varphi) \not =0$ we define the {\sl ramification degree} of $\varphi$ at $p'$ to be 
\[
r_{p'} = R(p') - \Big|\Big\{ v' \in T_{p'}(\varphi) \ | \ d_{v'}(\varphi) =0 \Big\}\Big|.
\]
We say that a harmonic morphism of metric graphs is {\sl effective} if $r_{p'} \geq 0$ for every $p' \in \Gamma'$. 
\end{defi}

\medskip 

Let $K_d' = K_d \setminus \{ e_1, e_2, \dots, e_{d-1} \}$ be a graph of the form obtained in the proof of Theorem \ref{stable}. The edges removed form two trees, $T_1$ and $T_2$, each of which is a rooted tree with $k_i$ leaves, such that $k_1 + k_2 = d-1$. 
The roots are $v_1$ and $v_2$. Only one of the leaf vertices is a vertex of both the trees and therefore not connected with $v_1$ and $v_2$.  We label it with~$v_{d-k_1 +1} = v_{k_2+2}$ . We indicate the vertices adjacent to $v_1$ different form $v_2$ with $v_3, v_4, \dots, v_{d-k_1}$ and the vertices adjacent to $v_2$ different from~$v_1$ with $v_{k_2+3}, \dots, v_d$.  

From now on we will visualize the graph $K_d'$ as showed in Figure \ref{NewGraph}, since it will be more effective for our arguments. Note that we are not drawing the removed edges anymore. Moreover we will suppose $k_1, k_2 \geq 1$, otherwise we can contract the edge $v_1v_2$ and reduce to the case $K_{d-1}'$. 

We consider the graphs $K_d'$ such that the edges $v_{k_2+2}v_i$ with $i \geq 3$ have all the same length and the same holds for the edges $v_1v_i$ with $3 \leq i \leq d-k_1$ and the edges $v_2v_i$ with $k_2+3 \leq i \leq d$. 

\begin{figure}[h]
\includegraphics[width=6cm,height=3cm]{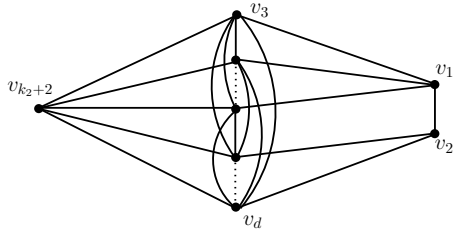}
\caption{The graph $K_d'$. The omitted edges are not depichted anymore.}
\label{NewGraph}
\end{figure}

\begin{prop} \label{harmonic}
There exists a harmonic morphism of metric graphs of degree $d-3$ from $K_d'$ to a metric tree $T$.
\end{prop}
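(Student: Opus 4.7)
The plan is to construct an explicit harmonic morphism $\varphi\colon K_d' \to T$ of degree $d-3$, where $T$ is the metric tree given by the path $t_L \text{---} t_M \text{---} t_R$ with two edges $e_{LM}$ and $e_{MR}$, whose common length is chosen to equal the common length of the three distinguished classes of edges of $K_d'$ (namely $v_1 v_i$ for $i\in A$, $v_2 v_i$ for $i\in B$, and $v_{k_2+2} v_i$ for $i\in A\cup B$). Here $A := \{v_3,\ldots,v_{k_2+1}\}$ and $B := \{v_{k_2+3},\ldots,v_d\}$, so $|A|+|B|=(k_2-1)+(k_1-1)=d-3$, which is exactly the target degree.

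I would define $\varphi$ on vertices by
\[
\varphi(v_1)=\varphi(v_2)=t_L,\qquad \varphi(v_{k_2+2})=t_R,\qquad \varphi(a)=\varphi(b)=t_M \text{ for every } a\in A,\ b\in B,
\]
and extend it by sending each of the edges $v_1 a$ and $v_2 b$ isometrically onto $e_{LM}$ (dilation factor $1$), and each of the edges $v_{k_2+2} a$, $v_{k_2+2} b$ isometrically onto $e_{MR}$ (dilation $1$). All remaining edges of $K_d'$ --- the edge $v_1 v_2$, and every edge with both endpoints in $A\cup B$ that is not incident to $v_{k_2+2}$ --- are contracted to their image vertex with dilation factor $0$.

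The key verification is harmonicity at each vertex. At $v_1, v_2, v_{k_2+2}$ it is automatic, since their images are leaves of $T$ with only a single tangent direction; the single sum gives the local degrees $d_{v_1}(\varphi)=|A|=k_2-1$, $d_{v_2}(\varphi)=|B|=k_1-1$, and $d_{v_{k_2+2}}(\varphi)=|A|+|B|=d-3$. At any intermediate vertex $a\in A$, the tangent direction at $t_M$ toward $t_L$ is hit only by the edge $v_1 a$ (dilation $1$), while the tangent direction toward $t_R$ is hit only by $v_{k_2+2} a$ (dilation $1$); both sums equal $1$, so $\varphi$ is harmonic at $a$ with $d_a(\varphi)=1$. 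The case $b\in B$ is symmetric. Contracted edges contribute dilation $0$ to every tangent direction, so they create no harmonicity obstruction (at interior points of contracted edges, both tangent directions map trivially, forcing local degree $0$).

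Summing local degrees over any fiber confirms $\deg\varphi = d-3$: over $t_L$ one gets $(k_2-1)+(k_1-1)=d-3$, over $t_M$ one gets $|A|+|B|=d-3$, and over $t_R$ one gets $d-3$. The main conceptual point --- and the part that required identifying the right target tree --- is the observation that the densely connected interior structure of $K_d'$ (all the edges within and between $A$ and $B$, plus the edge $v_1 v_2$) can be harmlessly contracted, so that only the $2(d-3)$ ``spoke'' edges emanating from the three distinguished vertices $v_1, v_2, v_{k_2+2}$ actually carry the morphism; the length uniformity hypothesis of the setup is precisely what allows these spokes to all be sent isometrically to the two edges of $T$ with a single uniform dilation factor, which is what makes harmonicity at the vertices of $A\cup B$ work out.
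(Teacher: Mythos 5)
Your construction is exactly the one in the paper: the target is the same three-vertex path, with $v_1,v_2$ sent to one leaf, $v_{k_2+2}$ to the other, the remaining $d-3$ vertices to the middle vertex, the $2(d-3)$ spoke edges mapped with dilation $1$, and all other edges contracted, yielding the same local degrees $k_2-1$, $k_1-1$, $d-3$, and $1$. The harmonicity and degree checks match the paper's, so the proposal is correct and takes essentially the same approach.
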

\begin{proof}
Let $T$ be the metric tree consisting of three vertices $u_1$, $u_2$ and $u_3$ such that $u_1$ is adjacent to $u_2$ and $u_2$ is adjacent to $u_3$. We define the morphism $\varphi: K_d' \rightarrow T$ in the following way
\[
\varphi(v) = \begin{cases}  u_1 \ \ \textrm{if} \ \ v = v_{k_2+2} \\ 
             u_2 \ \ \textrm{if} \ \ v \in \{v_3,v_4, \dots,v_{d-k_1},v_{k_2+3}, \dots,v_d \} \\
             u_3 \ \ \textrm{if} \ \ v \in \{v_1, v_2 \}. \\
            \end{cases}
\]
From the definition of the map on the vertices we have that the map on the edges is 
\[
\varphi(v_iv_j) = \begin{cases}  u_1u_2 \ \ & \textrm{if} \ \ i = k_2+2 \ \textrm{and} \ j \in \{3, 4, \dots, d-k_1, k_2+3,\dots, d\} \\ 
             u_2 \ \ & \textrm{if} \ \ i,j \in \{3, 4, \dots,d-k_1,k_2+3, \dots,d \} \\
             u_2u_3 \ \ & \textrm{if} \ \ i = 1 \ \textrm{and} \ j \in \{3, 4, \dots,d-k_1\} \ \textrm{or} \\
             & \textrm{if} \ \ i = 2 \ \textrm{and} \ j \in \{ k_2+3,\dots, d\} \\
             u_3 \ \ & \textrm{if} \ \ i=1 \ \textrm{and} \ j =2. \\
            \end{cases}
\]
By definition $d_{e'}(\varphi)=0$ for all the edges that are mapped to a vertex; for all the other edges instead we set $d_{e'}(\varphi)=1$. 
The morphism is illustrated in Figure \ref{Harmonic}.
  
\begin{figure}[h]
\includegraphics[width=7cm,height=4cm]{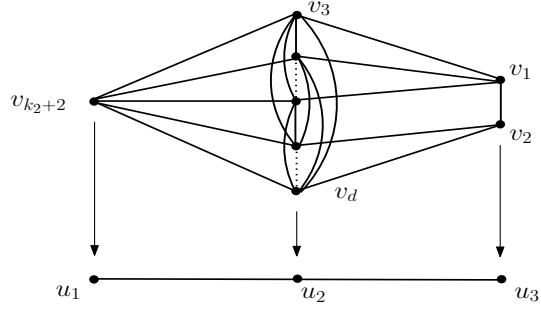}
\caption{The harmonic morphism $\varphi: K_d' \rightarrow T$.}
\label{Harmonic}
\end{figure}

The morphism $\varphi: K_d' \rightarrow T$ defined above is harmonic since it is clearly surjective and moreover at every $p' \in K_d'$ we have 
\[
d_{p'}(\varphi) =  \begin{cases}  d-3 \ \ & \textrm{if} \ \ p' = v_{k_2+2} \\ 
             1 \ \ & \textrm{if} \ \ p' \in \{v_3,v_4, \dots,v_{d-k_1},v_{k_2+3}, \dots,v_d \} \\
             d - k_1 - 2 \ \ & \textrm{if} \ \ p' = v_1 \\
             d - k_2 - 2 \ \ & \textrm{if} \ \ p' = v_2 \\
             1 \ \ & \textrm{if} \ \ p' \in v_{k_2 +2}v_j \ \ \textrm{with} \ j \in \{3,4, \dots,d-k_1,k_2+3, \dots,d \} \\
             1 \ \ & \textrm{if} \ \ p' \in v_jv_1 \ \ \textrm{with} \ j \in \{3,4, \dots,d-k_1 \} \ \ \textrm{or} \\ 
             1 \ \ & \textrm{if} \ \ p' \in  v_jv_2 \ \ \textrm{with} \ j \in \{k_2+3, \dots,d \} \\
             0 \ \ & \textrm{if} \ \ p' \in \ \textrm{int}(e) \ \textrm{with $e$ a contracted edge}, \\
              \end{cases}
 \]
and it is independent of the tangent direction $v \in T_{\varphi(p')}(T)$. 

The degree of the morphism $\varphi$ is $d-3$, because for every $p \in T$ 
\[
 \sum_{\substack{p' \in K_d', \\ \varphi(p') =p}} d_{p'}(\varphi) =d-3.
\]
\end{proof}
\end{subsection}

\begin{subsection}{Tropical modifications and tropical curves} The morphism constructed in the proof above is clearly not finite, in this section we will modify the graphs and extend the morphism in order to obtain a finite one. This is necessary because, as we will see later, only for finite morphisms it makes sense to talk about lifting them to morphisms of curves. 
\begin{defi}
Let $\Gamma$ be a metric graph. An {\sl elementary tropical modification} of $\Gamma$ is a metric graph $\Gamma_1 = \Gamma \cup [0, \infty]$ obtained from $\Gamma$ by attaching the segment $[0, \infty]$ in such a way that $0 \in [0, \infty]$ is identified with a finite point~$p \in \Gamma$. 
A {\sl tropical modification} of $\Gamma$ is a metric graph obtained by a finite sequence of elementary tropical modifications.

Tropical modifications and their inverses generate an equivalence relation on the set of metric graphs. A class of equivalence is called a {\sl tropical curve}. 
\end{defi}

We indicate with $\mathbb{TP}^1$ the unique rational tropical curve.

\begin{defi}
Let $\Gamma'$ be a metric graph and $\varphi: \Gamma' \rightarrow \Gamma$ a harmonic morphism. An {\sl elementary tropical modification of $\varphi$} is a harmonic morphism $\widetilde{\varphi}: \Gamma_1' \rightarrow \Gamma_1$, where $\phi': \Gamma_1' \rightarrow \Gamma'$ and $\phi: \Gamma_1 \rightarrow \Gamma$ are elementary tropical modifications and $\phi' \circ \varphi = \widetilde{\varphi} \circ \phi$. A sequence of elementary tropical modification is called a {\sl tropical modification}. 

A {\sl tropical morphism of tropical curves} $\varphi: C' \rightarrow C$ is a harmonic morphism of metric graphs between some representatives of $C'$ and $C$ considered up to tropical equivalence and that has a finite representative. 
\end{defi}

\begin{defi} A tropical curve $\Gamma$ is {\sl $d$-gonal} if there exists a tropical morphism $\varphi: \Gamma \rightarrow \mathbb{TP}^1$ of degree $d$.
\end{defi}

\begin{thm} \label{harmonic2} The tropical curve of which $K_d'$ is a representative, is $(d-3)$--gonal.
\end{thm}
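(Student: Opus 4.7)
The plan is to extend the harmonic morphism $\varphi\colon K_d'\to T$ from Proposition \ref{harmonic}, which has degree $d-3$ but is not finite, to a finite harmonic morphism between suitable tropical modifications of $K_d'$ and $\mathbb{TP}^1$. By the definition of a tropical morphism of tropical curves given in Section \ref{hm}, producing such a finite representative yields a tropical morphism of degree $d-3$, establishing $(d-3)$-gonality of the tropical curve class of $K_d'$.

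First I would modify the target. Since $T$ is a tree, any tropical modification of it is again a tree, hence a representative of $\mathbb{TP}^1$. I attach an infinite ray at each of the leaves $u_1$ and $u_3$ (turning them into infinite vertices), together with additional infinite rays at $u_2$ and at $u_3$ that will accommodate the images of the midpoints of the contracted edges of $\varphi$. Call the resulting tree $\widetilde T$.

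Second I would modify the source. The edges of $K_d'$ that $\varphi$ contracts are the edge $v_1v_2$ (contracted to $u_3$) and the edges of the $K_{d-3}$ subgraph on the middle vertices $v_i$ with $i\in\{3,\dots,k_2+1,k_2+3,\dots,d\}$ (all contracted to $u_2$). For each such contracted edge $e$ I subdivide $e$ at an interior point $m_e$, attach an infinite ray at $m_e$, and map it onto the corresponding new ray of $\widetilde T$. After subdivision the two halves of $e$ map onto a segment from the former target vertex to $\widetilde\varphi(m_e)$, so no half-edge is contracted. I also attach infinite rays at $v_{k_2+2}$, at the middle vertices, and at $v_1,v_2$ to absorb the contributions needed to balance the harmonicity equations along each tangent direction of $\widetilde T$.

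The main obstacle is enforcing harmonicity at the middle vertices of $\widetilde K_d'$: at each such $v_i$ the $d-4$ formerly contracted edges now contribute to the same tangent direction of $\widetilde T$ at $u_2$, so the harmonic condition forces $d_{v_i}(\widetilde\varphi)$ to grow, and the non-contracted edges $v_iv_{k_2+2}$ and $v_iv_1$ (or $v_iv_2$) together with the new rays at $v_i$ must be assigned integer dilation factors that equalize the contributions in every tangent direction of $\widetilde T$ at $u_2$. Exhibiting such an assignment consistently across the graph, with the total degree kept at $d-3$, is the technical heart of the argument. Once this is done, $\widetilde\varphi\colon\widetilde K_d'\to\widetilde T$ is surjective, finite, and harmonic of degree $d-3$, yielding a tropical morphism which shows that the tropical curve class of $K_d'$ is $(d-3)$-gonal.
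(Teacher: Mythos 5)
Your overall strategy --- make $\varphi$ finite by tropical modifications of source and target --- is the same as the paper's, but the construction you sketch breaks down exactly at the step you yourself flag as ``the technical heart,'' and in the form you set it up it cannot be completed. You route all $d-4$ formerly contracted edges incident to a middle vertex $v_i$ into \emph{one} new tangent direction of $\widetilde T$ at $u_2$. Once the map is finite, each of these edges has expansion factor at least $1$, so the degree of $v_i$ in that shared direction is at least $d-4$, and harmonicity forces $d_{v_i}(\widetilde\varphi)\geq d-4$. Summing over the $d-3$ middle vertices, the degree of $\widetilde\varphi$ over $u_2$ is then at least $(d-3)(d-4)$, which exceeds $d-3$ for $d\geq 6$. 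No assignment of dilation factors to the remaining edges and rays can repair this, so the ``consistent assignment keeping the total degree at $d-3$'' that you defer does not exist in your setup. There is also a local failure at each midpoint $m_e$: two half-edges of expansion factor $1$ arrive at $\widetilde\varphi(m_e)$ from the direction of $u_2$, but you attach only one outgoing infinite ray, so harmonicity at $m_e$ fails ($2\neq 1$) unless you attach two rays or give the single ray expansion factor $2$; you specify neither.

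The paper (following Proposition 3.20 of \cite{ABBR2}) avoids both problems by performing a \emph{separate} elementary modification of $T$ for each contracted edge $e'$: each $e'$ gets its own new segment $uu'$ and infinite ray $f$, the edge $e'$ is folded onto $uu'$ via its midpoint with \emph{two} new infinite rays there mapping to $f$, and then at every other preimage $p'$ of $u$ one attaches enough degree-one rays mapping onto $uu'\cup f$ to bring the degree in the new direction up to $d_{p'}(\varphi)$. This leaves every local degree unchanged, so the total degree stays at $d-3$, and effectivity is immediate because all expansion factors equal $1$. Replacing your shared-ray construction with this per-edge one (and fixing the midpoint count) is what is needed to close the gap; the extra rays you attach at $u_1$ and $u_3$ are harmless but unnecessary.
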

\begin{proof} It is possible through tropical modifications to obtain a finite effective harmonic morphism $\widetilde{\varphi}: \widetilde{K_d'} \rightarrow \widetilde{T}$. Let $e' \in K_d'$ be a contracted edge and $u \in V(T)$ its image. We construct the graph $\widetilde{K_d'}$ with two tropical modifications in the mid point $m$ of the edge $e'$, so it has two new edges $e_1'$, $e_2'$ obtained by adding a vertex in the edge $e'$ and two edges $e_3'$ and $e_4'$ corresponding to $[0, \infty]$.  We consider the graph $\widetilde{T}$ obtained by a tropical modification in $u$, so it has a new edge $f$ corresponding to the segment $[0, \infty]$. Moreover we insert another vertex $u'$ in the edge $f$ at distance equal to the length of $e_1'$ and $e_2'$,  obtaining the edge $uu'$ and an infinite edge starting at $u'$, that we indicate again with $f$. The tropical modifications are depicted in Figure \ref{finite}.
\begin{figure}[h]
\includegraphics[width=3cm,height=3cm]{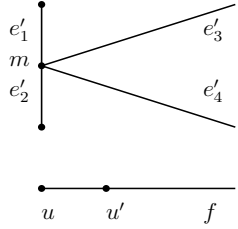}
\caption{The tropical modifications at the point $m$ and $u$ constructed in the proof of Theorem \ref{harmonic2}.}
\label{finite}
\end{figure}

We define the map $\widetilde{\varphi}$ in the following way: 
\begin{itemize}
\item $ \widetilde{\varphi}(e_1') = \widetilde{\varphi}(e_2') = uu'$; 
\item $ \widetilde{\varphi}(e_3') = \widetilde{\varphi}(e_4') = f$;
\item $ \widetilde{\varphi}|_{\widetilde{K_d'} \setminus \{e_1', e_2', e_3', e_4'\}} = \varphi|_{K_d' \setminus \{e'\}};$
\item the degree of $\widetilde{\varphi}$ is set equal to $1$ along every new edge.  
\end{itemize}

The obtained morphism might not be harmonic at the vertices of the edge $e'$, because for the tangent direction $v$ corresponding to the edge $uu'$, the number $d_{v}(\widetilde{\varphi})$  might be smaller from the number $d_w(\widetilde{\varphi})$ for another tangent direction $w$. It can be made harmonic by adding $d_w(\widetilde{\varphi}) - d_{v}(\widetilde{\varphi})$ tropical modifications at the vertex, inserting a vertex in the new infinite edges at distance $e_1'$ and mapping them to the edges $uu'$ and $f$. Again the degree of $\widetilde{\varphi}$ along any new edge is set equal to $1$. The harmonic morphism is depicted in Figure \ref{HarmonicGonal}.

By repeating this process for every contracted edge we obtain a finite harmonic morphism $\widetilde{\varphi}: \widetilde{K_d'} \rightarrow \widetilde{T}$ that coincides with $\varphi$ outside the contracted edges. The degree of the morphism is $d-3$. 

\vspace{\baselineskip}
The harmonic morphism $\widetilde{\varphi}$ constructed above is effective because 
\[
\begin{array}{ccl}
r_{p'} &=& R(p') - \Big|\Big\{v' \in T_{p'}(\widetilde{\varphi}) \ | \ d_{v'}(\widetilde{\varphi}) =0\Big\}\Big| \\
&=& 2 d_{p'}(\widetilde{\varphi})- 2 - \Big|\Big\{v' \in T_{p'}(\widetilde{\varphi}) \ | \ d_{v'}(\widetilde{\varphi}) =0 \Big\}\Big| \\
\end{array}
\]
and $r_{p'} \geq 0$ for every $p' \in \widetilde{K_d'}$, since $d_{v'}(\widetilde{\varphi})=1$ for every $v' \in \widetilde{K_d'}$. 

\vspace{\baselineskip}
\begin{figure}[h]
\includegraphics[width=7 cm,height=4cm]{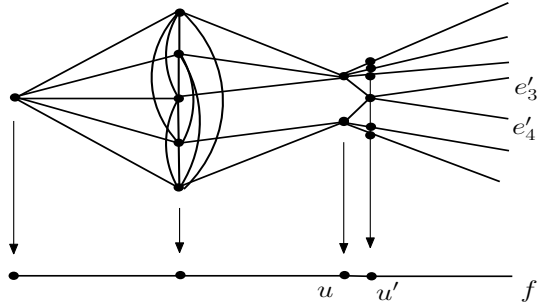}
\caption{The harmonic morphism $\widetilde{\varphi}: \widetilde{K_d'} \rightarrow \widetilde{T}$ constructed in the proof of Theorem \ref{harmonic2}.}
\label{HarmonicGonal}
\end{figure}
\end{proof}

To make the notation easier we indicate the tropical morphism of metric graphs $\widetilde{\varphi}$ constructed above with $\varphi : K_d' \rightarrow T$ and we use the same notation for the morphism of tropical curves $\varphi: K_d' \rightarrow \mathbb{TP}^1$. 
\begin{rem} The construction above is explained in the proof of Proposition 3.20 of \cite{ABBR2}. 
\end{rem}
\end{subsection}

\begin{subsection}{Lifting harmonic morphisms}
We want to lift the tropical morphism $\varphi: K_d' \rightarrow \mathbb{TP}^1$ constructed in the previous proof to a finite morphism of degree $d-3$ from a curve to the projective line $\mathbb{P}^1$ inducing the morphism $\varphi$ on the skeleta. As we will see this boils down to prove the non-emptiness of some sets that can be associated to the harmonic morphism, then by results of \cite{ABBR1} and \cite{ABBR2}, we will then know that the morphism can be lifted. We remark that the construction of the lift passes through a lift to a morphism of metrized complexes. We will only briefly mention these in our arguments and give references to the precise results. 
\vspace{\baselineskip}

Let $K$ be an algebraically closed field, complete with respect to a nontrivial non-Archimedean valuation. Let $k$ be its residue field, and $\Lambda = \textrm{val}(K^{\times})$ its value group. All the metric graphs introduced in this Section are $\Lambda$-metric graphs. Lemma 3.15 and Remark 3.16 of \cite{ABBR1} state that given a triangulated punctured curve $(X, V \cup D)$, a finite set $D'$ of punctures and a semistable vertex set $V'$, the metric graphs underlying $\Sigma(X, V \cup D)$ and $\Sigma(X, V' \cup D')$ represent the same tropical curve. 
\begin{defi}(Definition 4.25 of \cite{ABBR1}) A {\sl finite morphism} of triangulated punctured curves $\varphi: (X', V' \cup D') \rightarrow (X, V \cup D)$ is defined as a finite morphism $\varphi: X' \rightarrow X$ such that $\varphi^{-1}(D)= D'$, $\varphi^{-1}(V) = V'$, and $\varphi^{-1}(\Sigma(X, V \cup D)) = \Sigma'(X', V' \cup D')$ (as sets).
\end{defi}
Corollary 4.26 of the same article states that given a finite morphism $\varphi$~of smooth, proper, connected $K$-curves and a finite set of punctures $D \subseteq X(K)$ and $D' = \varphi^{-1}(D)$, then there exist semistable vertex sets $V$ and $V' = \varphi^{-1}(V)$ such that $\varphi$ induces a finite morphism of punctured curves $\varphi: (X', V' \cup D') \rightarrow (X, V \cup D)$.   In particular $\varphi$ induces a finite harmonic morphism on suitable choices of skeleta. 
\begin{defi}[Definition 2.22 of \cite{ABBR2}]
A tropical morphism of tropical curves $\varphi: C' \rightarrow C$ is {\sl liftable} if there exists a finite morphism of smooth, proper, connected $K$-curves $\psi: X' \rightarrow X$ inducing $\varphi$ on the skeleta as explained above. 
\end{defi}

Let $\varphi: \Gamma' \rightarrow \Gamma$ be a finite  harmonic morphism of $\Lambda$-metric graphs. As explained in \cite{ABBR2} to every $p' \in \Gamma'$ it is possible to associate a collection of val$(\varphi(p'))$ partitions of $d_{p'}(\varphi)$: for every tangent direction $v$ of $\varphi(p)$ we define the partition $\mu_i(p')$ as all the directional derivatives $d_{v'}(\varphi)$ for $v'$ such that $\varphi(v')=v$. 
We define the set
\[
\mathcal{A}_{0, 0}^{d_{p'}(\varphi)}(\mu_1, \dots, \mu_{\textrm{val}(\varphi(p'))})
\]
as the set of tame covering $\varphi_{p'}: C' \rightarrow C$ of smooth, proper, curves over $k$, with the properties: 
\begin{itemize}
\item the curves $C'$ and $C$ are irreducible of genus zero;
\item the degree of $\varphi_{p'}$ is $d_{p'}(\varphi)$;
\item the branch locus of $\varphi_{p'}$ contains at least $\textrm{val}(\varphi(p'))$ distinct points and the ramification profile at every preimage is given by one of the partitions $\mu_i$. 
\end{itemize}
We remark that when defining these sets we think of metric graphs as totally degenerated augmented graphs. This also explains why we are considering curves of genus zero. See \cite{ABBR2} for further reading.  

\vspace{\baselineskip}

If $\mathcal{A}_{0,0}^{d_{p'}(\varphi)}(\mu_1, \dots, \mu_{\textrm{val}(\varphi(p')}) \not = \emptyset$ for every $p' \in \Gamma'$, by Proposition 3.3 of \cite{ABBR2} the morphisms $\varphi$ can be lifted to a tame harmonic morphism of metrized complexes (see Definition 2.19 and 2.21 of \cite{ABBR1}). Finally, by Proposition 3.24 of \cite{ABBR1} and Proposition 7.15 of \cite{ABBR1} we can conclude that there is a finite morphisms of curves lifting the morphism of metrized complexes. 
Combining all these results together we obtain that the harmonic morphism $\varphi: \Gamma' \rightarrow \Gamma$ is liftable to a finite morphism of triangulated punctured curves. The genera of the curves and the degree of the morphism is preserved.

\begin{thm} The morphism $\varphi: K_d' \rightarrow T$ constructed in the proof of Theorem \ref{harmonic2} is liftable to a finite morphism of triangulated punctured curves.
\end{thm}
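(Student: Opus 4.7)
The plan is to apply the lifting machinery of \cite{ABBR1} and \cite{ABBR2}, which reduces the problem to a purely local computation at each point of $K_d'$. Specifically, by Proposition 3.3 of \cite{ABBR2}, the finite harmonic morphism $\varphi$ lifts to a tame harmonic morphism of metrized complexes of curves as soon as, for every $p' \in K_d'$, the local Hurwitz set $\mathcal{A}_{0,0}^{d_{p'}(\varphi)}(\mu_1,\dots,\mu_{\textrm{val}(\varphi(p'))})$ is non-empty, where the partitions $\mu_i$ record the ramification profiles along the tangent directions at $\varphi(p')$. Once such local lifts are produced, Propositions 3.24 and 7.15 of \cite{ABBR1} assemble them into an honest finite morphism of triangulated punctured curves of the prescribed degree.

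The key observation is that, by construction of $\varphi$ in the proof of Theorem \ref{harmonic2}, one has $d_{v'}(\varphi) = 1$ for every tangent direction $v'$ of $K_d'$. Consequently, at every point $p' \in K_d'$ and every tangent direction $v$ at $\varphi(p')$, the associated partition $\mu_v$ is simply the trivial partition $(1,1,\dots,1)$ of $n := d_{p'}(\varphi)$ into $n$ parts. The non-emptiness of $\mathcal{A}_{0,0}^{n}(\mu_1,\ldots,\mu_k)$ is elementary in this situation: the degree-$n$ morphism $z \mapsto z^n$ (which is tame whenever the residue characteristic does not divide $n$, a condition we may arrange by a suitable base extension) together with the choice of $k$ marked points in $\mathbb{P}^1$ away from the branch locus $\{0,\infty\}$ yields a cover satisfying the requirements, since each such marked point has exactly $n$ distinct, unramified preimages.

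With the local verification in hand, Proposition 3.3 of \cite{ABBR2} produces a tame harmonic morphism of metrized complexes lifting $\varphi$, and the gluing results of \cite{ABBR1} (Propositions 3.24 and 7.15) upgrade this to a finite morphism $\psi : X' \to X$ of smooth, proper, connected $K$-curves of degree $d-3$ inducing $\varphi$ on the skeleta. By the discussion of Section \ref{skeleta} this is precisely a finite morphism of triangulated punctured curves lifting $\varphi$, which is the desired conclusion.

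The main obstacle in a more general lifting problem of this type is the existence of the local covers with prescribed non-trivial ramification profiles, since the Hurwitz sets $\mathcal{A}_{0,0}^{n}(\mu_1,\ldots,\mu_k)$ can be empty for genuine combinatorial reasons (e.g.\ Riemann--Hurwitz or monodromy constraints). In our setting this obstacle evaporates entirely, because every $\mu_i$ is trivial and we are free to realize the marked points as points in the unramified locus of an arbitrary tame degree-$n$ cover of $\mathbb{P}^1$ by $\mathbb{P}^1$; what remains is bookkeeping subsumed by the cited results.
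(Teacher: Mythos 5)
Your proof follows the paper's argument exactly: both reduce, via Proposition 3.3 of \cite{ABBR2}, to the non-emptiness of the local sets $\mathcal{A}_{0,0}^{d_{p'}(\varphi)}(\mu_1,\dots,\mu_{\textrm{val}(\varphi(p'))})$, observe that every partition is trivial because all directional derivatives of $\varphi$ equal $1$ by construction, and then invoke Propositions 3.24 and 7.15 of \cite{ABBR1} to assemble the global lift to a finite morphism of triangulated punctured curves. Your explicit witness $z \mapsto z^n$ is a pleasant addition that the paper omits, though note that a base extension cannot alter the residue characteristic, so the tameness of that cover is really a standing hypothesis inherited from \cite{ABBR2} rather than something you can arrange by extending $K$.
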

\begin{proof}
We need to show that the sets $\mathcal{A}_{0,0}^{d_{p'}(\varphi)}(\mu_1, \dots, \mu_{\textrm{val}(\varphi(p')})$ are not empty for every $p' \in K_d'$.
From the proof of the theorem it follows that for every $p' \in K_d'$ the set of $\textrm{val}(\varphi(p'))$ partitions of the number $d_{p'}(\varphi)$ is given by trivial partitions since by construction for every $p' \in K_d'$ and all tangent directions $v$ at $\varphi(p')$ the directional derivatives $d_{v'}(\varphi)$ are equal to 1 for every $v'$ satisfying $\varphi(v')=v$. Therefore the sets are not empty and by the arguments made above we can conclude.
\end{proof}

The degree is preserved by the lift. We obtained therefore a $K$-curve $(X', V' \cup D')$ and a finite morphism of degree $d-3$ to $\mathbb{P}^1$; so $X'$ is a curve of gonality $d-3$ that has~$K_d'$ as skeleton.

The divisor of rank $1$ obtained in the proof of Theorem~\ref{stable} is 
\[
 D = (v_3)+ (v_4) + \cdots +(v_{d-k_1}) + (v_{k_2+3}) +  \cdots + (v_d).
\]
It is the divisor that appears as a fiber of the finite harmonic morphism at the point $u_2 \in T$, in fact 
\[ 
 D= \sum_{v \in \varphi^{-1}(u_2)} d_v(\varphi)(v).
\]
Therefore by Proposition 4.2 of \cite{ABBR2} and the argument just made, it can be lifted to a divisor of degree $d-3$ and rank $1$ on the curve~$X'$. 
\end{subsection}
\end{section}
\begin{subsection}*{Acknowledgments} I am very grateful to Marc Coppens and Filip Cools for suggesting me this problem, for their guidance and for their valuable comments on earlier versions of this paper. I would also like to thank Erwan Brugall\'e for interesting discussions about harmonic morphisms and the referee for helpful comments, which were valuable in improving the manuscript. 
\end{subsection}

\bibliographystyle{amsalpha}
\bibliography{bibliografia}

\end{document}